\newtheorem{thm}{Theorem}
\newtheorem{prop}{Proposition}[section]
\newtheorem{lm}[prop]{Lemma}
\newtheorem{cor}[prop]{Corollary}
\theoremstyle{definition}
\newtheorem{dfn}[prop]{Definition}
\theoremstyle{remark}
\newtheorem{rem}[prop]{Remark}
\newtheorem{ex}[prop]{Example}
\DeclareMathOperator{\Ker}{ker}
\DeclareMathOperator{\Id}{Id}
\DeclareMathOperator{\supp}{supp}
\DeclareMathOperator{\im}{im}
\DeclareMathOperator{\rdim}{rdim}
\newcommand{\rarr}{\rightarrow}
\newcommand{\lrarr}{\longrightarrow}
\newcommand{\Rarr}{\Rightarrow}
\newcommand{\Lrarr}{\Longrightarrow}
\newcommand{\R}{\mathbb{R}}
\newcommand{\C}{\mathbb{C}}
\newcommand{\Z}{\mathbb{Z}}
\newcommand{\M}{\mathcal{M}}
\newcommand{\Mbar}{\overline\M}
\renewcommand{\d}{\partial}
\newcommand{\mC}{\mathfrak{C}}
\renewcommand{\Im}{\im}
\newcommand{\A}{\overline{\!A}\mbox{}}
\newcommand{\D}{\mathcal{D}}
\renewcommand{\a}{\alpha}
\newcommand{\G}{\mathcal{G}}
\newcommand{\X}{\mathcal{X}}
\newcommand{\Y}{\mathcal{Y}}
\newcommand{\cZ}{\mathcal{Z}}
\newcommand{\EPG}{\textbf{EPG}}
\newcommand{\Orb}{\textbf{Orb}}
\DeclareMathOperator{\coker}{coker}
\newcommand{\Uc}{\mathcal U}
\newcommand{\LL}{\mathcal L}
\newcommand{\Fdga}{\textbf{Fdga}}
\newcommand{\Orbpr}{\Orb\textsuperscript{pr}}
\newcommand{\Fdg}{\textbf{Fdg}}
\newcommand{\Fu}{\mathfrak{F}}
\newcommand{\Acl}{A_{cl}}
\newcommand{\Ac}{A_c}
\newcommand{\An}{\A_0}
\newcommand{\mZ}{\mathcal{Z}}
\newcommand{\mP}{\mathcal{P}}
\definecolor{grayish}{gray}{0.30}
\newcommand{\hd}{\hat{d}}
\renewcommand{\dh}{\d^h}
\newcommand{\dv}{\d^v}
\newcommand{\iv}{i^v}
\newcommand{\ih}{i^h}
\newcommand{\B}[1]{{\mathcal{B}#1}}
\title{Differential forms on orbifolds with corners}
\keywords{Differential form, integration over fibers, orbifold, \'etale proper groupoid, bicategory, 2-category, category of fractions}
\subjclass[2020]{20L05, 58A10 (Primary) 18E35, 58A25 (Secondary)}
\date{March 2023}
\author[J. Solomon]{Jake P. Solomon}
\address{Institute of Mathematics\\ Hebrew University, Givat Ram\\Jerusalem, 91904, Israel } \email{jake@math.huji.ac.il}
\author[S. Tukachinsky]{Sara B. Tukachinsky}
\address{School of Mathematical Sciences\\ Tel Aviv University\\Tel Aviv, 6997801, Israel }\email{sarabt1@gmail.com}
\begin{document}

\begin{abstract}
Motivated by symplectic geometry, we give a detailed account of differential forms and currents on orbifolds with corners, the pull-back and push-forward operations, and their fundamental properties. We work within the formalism where the category of orbifolds with corners is obtained as a localization of the category of \'etale proper groupoids with corners. Constructions and proofs are formulated in terms of the structure maps of the groupoids, avoiding the use of orbifold charts. The Fr\'echet space of differential forms on an orbifold and the dual space of currents are shown to be independent of which \'etale proper groupoid is chosen to represent the orbifold.
\end{abstract}

\maketitle

\setcounter{tocdepth}{2}
\tableofcontents

\section{Introduction}
\subsection{Motivation and background}\label{ssec:mab}

In this paper, we give a detailed account of differential forms and currents on orbifolds with corners, the pull-back and push-forward operations, and their fundamental properties.
Our main motivation comes from the Fukaya category and open Gromov-Witten theory in symplectic geometry. Although the body of the paper is independent of symplectic geometry, we recall briefly some relevant references which give applications of the techniques developed here. Consider a symplectic manifold $(X,\omega)$ and a Lagrangian submanifold $L \subset X.$ Choose an $\omega$-tame almost complex structure $J$ on $X.$
Let $\M_k(X,L;\beta)$ denote the space of $J$-holomorphic disk maps $(D^2,\partial D^2) \to (X,L)$ with $k$ marked points on $\partial D^2,$ representing the class $\beta \in H_2(X,L)$. A natural compactification is given by the space of open stable maps $\Mbar_k(X,L;\beta).$ There are evaluation maps at each of the marked points $ev_i : \Mbar_k(X,L;\beta) \to L$ for $i = 1,\ldots,k.$ In nice cases, the compactified moduli space $\Mbar_k(X,L;\beta)$ is a smooth orbifold with corners~\cite{Zernik2}. The Fukaya $A_\infty$ algebra of $L$ is constructed by push-forward and pull-back of differential forms along the evaluation maps $ev_i.$ The structure equations of the Fukaya $A_\infty$ algebra are proved using properties of push-forward and pull-back given below~\cite{ST1}. For certain proofs, it is natural to consider push-forward along the forgetful maps $\pi_k : \Mbar_{k}(X,L;\beta) \to \Mbar_{k-1}(X,L;\beta),$ which are not submersions, and thus one is led to consider currents on orbifolds with corners. From Fukaya $A_\infty$ algebras, one can define open Gromov-Witten invariants of Lagrangian submanifolds~\cite{ST2}. The techniques developed here are further used in proving structural equations for open Gromov-Witten invariants~\cite{ST3}. In general, the moduli spaces $\Mbar_k(X,L;\beta)$ are not orbifolds with corners. Nonetheless, we believe the techniques of this paper will be useful in the ongoing efforts to better understand the virtual fundamental class~\cite{Fukaya,FOOOtoricI,FOOOtoricII,FOOOKuranishi,HoferWysockiZehnderbook,HoferWysockiZehnder,HoferWysockiZehnder1,HoferWysockiZehnder3}, in which differential forms on orbifolds with corners and their infinite dimensional generalizations play an important role.

The preceding applications require us to consider orbifolds with corners. On the other hand, we are not aware of a convenient reference for the results of this paper even in the simpler context of closed orbifolds. Indeed, many constructions based on moduli spaces that are closed orbifolds can be carried out at the level of cohomology, so it is not necessary to develop the chain level theory of differential forms in full in that context. It is because of the appearance of moduli spaces that are orbifolds with corners that the Fukaya category and open Gromov-Witten theory require a chain level approach.

We work within the formalism where the weak 2-category of orbifolds with corners is obtained as a localization of the 2-category of \'etale proper groupoids with corners~\cite{Pronk}. An introduction to the parts of this theory needed for the present paper is given in Section~\ref{sec:owc}. The statements and proofs of the paper can be easily adapted to the simpler formalism in which orbifolds with corners form an ordinary category~\cite{Moerdijk}. However, we opt for the $2$-categorical language because it does not add much complexity to the proofs and offers advantages for applications. In particular, morphisms are local in the $2$-categorical approach~\cite{Lerman}. More broadly, the $2$-categorical approach remains as close as possible to the language of smooth Deligne-Mumford stacks that is prevalent in algebraic geometry. Many of our arguments are derived from Behrend's work on the cohomology of stacks~\cite{Behrend}.

\subsection{Statement of results}

To each orbifold with corners $\X,$ we associate a differential graded algebra $A^*(\X)$ of differential forms on $\X.$ We endow $A^*(\X)$ with the structure of a Fr\'echet space compatible with the differential graded algebra structure. To any morphism of orbifolds with corners $f : \X \to \Y,$ we associate a pull-back morphism
\[
f^*: A^*(\Y) \to A^*(\X),
\]
and to any relatively oriented proper submersion $f : \X \to \Y,$ we associate a push-forward morphism
\[
f_*: A^*(\X) \to A^*(\Y).
\]
The pull-back and push-forward morphisms are continuous with respect to the Fr\'echet topology. Our definitions of differential forms, pull-back, and push-forward, for orbifolds with corners generalize the usual ones for manifolds with corners.

The following theorem is our main result. We refer the reader to Section~\ref{sec:oi} for the notions of strongly smooth maps, fiberwise boundary, and relative orientations.
\begin{thm}\label{thm:main}\leavevmode
\begin{enumerate}[label=(\alph*)]
\item\label{nt}
Let $f,g : \X \to \Y$ be morphisms of orbifolds with corners and let $\alpha : f \Rightarrow g$ be a $2$-morphism. Then $f^* = g^*.$ If $f$ and $g$ are relatively oriented proper submersions, and $\alpha$ is relatively oriented, then also $f_* = g_*.$
	\item\label{opushcomp}
		Let $g: \X\to \Y$, $f:\Y\to \cZ,$ be morphisms of orbifolds with corners. Then
\[
g^* \circ f^* = (f\circ g)^*.
\]
If $f,g,$ are relatively oriented proper submersions, then
		\[
		f_*\circ g_*=(f\circ g)_*.
		\]
	\item\label{opushpull}
		Let $f:\X\to \Y$ be a relatively oriented proper submersion of orbifolds with corners, and let $\alpha\in A^*(\Y),$ $\beta\in A^*(\X)$. Then
		\[
		f_*(f^*\alpha\wedge\beta)=\alpha\wedge f_*\beta.
		\]
	\item\label{opushfiberprod}
		Let
		\[
		\xymatrix{
		{\X\times_\cZ \Y}\ar[r]^{\quad p}\ar[d]^{q}&
        {\Y}\ar@2{L->L}[dl]\ar[d]^{g}\\
        {\X}\ar[r]^{f}&\cZ
		}
		\]
		be a weak pull-back diagram of orbifolds with corners, where $f$ and $g$ are strongly smooth and $g$ is a relatively oriented proper submersion. Equip $q$ with the pull-back relative orientation and let $\alpha\in A^*(\Y).$ Then
		\[
		q_*p^*\alpha=f^*g_*\alpha.
		\]
\item\label{thmstokes}
Let $f:\X\to \Y$ be a strongly smooth relatively oriented proper submersion of orbifolds with corners with $\dim \X=s$, and let $\xi\in A^t(\X)$. Then
\[
d (f_*\xi)=f_*(d \xi)+(-1)^{s+t}\big(f\big|_{\dv \X}\big)_*\xi,
\]
where $\dv \X$ is the fiberwise boundary with respect to $f,$ with the induced relative orientation.
\end{enumerate}
\end{thm}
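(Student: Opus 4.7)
The plan is to reduce each item of Theorem~\ref{thm:main} to the corresponding classical statement for manifolds with corners, by unwinding the definitions of pull-back and push-forward through a choice of representing \'etale proper groupoid. The paper establishes that $A^*(\X)$ together with both functorial operations does not depend, up to canonical isomorphism, on the chosen representative, so it suffices to verify each identity after picking convenient groupoid presentations for $\X$, $\Y$, and $\cZ$. Two general facts from the $2$-categorical formalism will be used repeatedly: every morphism of orbifolds is represented by a span $\X \longleftarrow \widetilde{\X} \rightarrow \Y$ whose left leg is an essential equivalence, and a weak $2$-pullback is represented by a strict $2$-fiber product of groupoids whenever the hypotheses of part~(d) hold.

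For part~(a), I treat a $2$-morphism $\alpha: f\Rightarrow g$ as a natural transformation of morphisms of representing groupoids, that is, a smooth map from the object space to the arrow space of the target intertwining $f$ and $g$. Invariant forms on the target pull back to the same form under $f$ and $g$ because any arrow equalizes invariant forms; hence $f^* = g^*$. For push-forward, the same natural transformation intertwines the two submersions, and the relative-orientation hypothesis on $\alpha$ ensures the induced orientations match, giving $f_* = g_*$. Parts~(b) and~(c) are then direct: both are local statements on the object manifolds of groupoid representatives and reduce to the composition rule and the projection formula for manifolds with corners. For~(d), I represent $f$ and $g$ by strict morphisms of groupoids and the weak pull-back by the strict $2$-fiber product; the classical base-change identity holds objectwise, and descent to the orbifold is automatic once one checks compatibility with the structure maps of the fiber-product groupoid.

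The deepest part is~(e). I will first prove the fibered Stokes identity for a strongly smooth relatively oriented proper submersion $\varphi: M \to N$ of manifolds with corners: the fiberwise boundary $\dv M$ is the union of those boundary hypersurfaces of $M$ along which $\varphi$ remains a submersion, equipped with the induced relative orientation, and a partition-of-unity argument reduces the formula $d \varphi_* \xi = \varphi_* d\xi + (-1)^{s+t} (\varphi|_{\dv M})_* \xi$ to the standard relative Stokes theorem on local product charts. I then transfer the identity to groupoids: if $G \rightrightarrows G_0$ represents $\X$, its fiberwise boundary is again an \'etale proper groupoid whose structure maps are the restrictions of those of $G$, and fiber integration commutes with these restrictions on invariant forms, so the identity passes to invariant forms and hence to $A^*(\X)$. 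The main obstacle throughout, and particularly in~(e), will be keeping orientation conventions consistent and verifying that the induced relative orientation on $\dv \X$ is well-defined as a structure on the orbifold rather than merely on a chosen groupoid representative; this reduces to an invariance check under change of representing groupoid, which is careful but essentially formal.
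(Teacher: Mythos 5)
Your overall skeleton --- reduce $\Orb$ to $\EPG$ to manifolds with corners --- matches the paper's, but there are genuine gaps at the two places where the actual work happens. First, you never engage with how push-forward is \emph{defined} on $A^*(\X)$. For a proper submersion $F:\X\to\Y$ in $\EPG$, properness means $|F|$ is proper; the map $F_0:X_0\to Y_0$ on object spaces is generally not proper (already for the refinement $\coprod_\alpha U_\alpha\to M$ associated to a cover), so ``fiber integration on invariant forms'' is not defined directly. The paper must introduce cleanly supported forms, the coinvariant space $\Acl^*(\X)$, partitions of unity with clean support (Lemma~\ref{lm:pu}), and the mutually inverse maps $J=t_*s^*$ and $K=\rho\,\cdot$ (Lemma~\ref{lm:JKinv}), and then set $F_*=J\circ(F_0)_*\circ K$. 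Every item of the theorem is proved by manipulating this composite --- e.g.\ part~\ref{thmstokes} needs $d$ to commute with $J$ and $K$ (Lemma~\ref{lm:dJK}) and the restriction of a partition of unity to $\dv\X$ to remain one (Lemma~\ref{lm:partonbd}) --- so your phrase ``the identity passes to invariant forms'' has no content until this machinery is in place.

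Second, you underestimate parts~\ref{opushcomp} and~\ref{opushfiberprod}. Composition of spans in $\Orb$ is performed via a weak fiber product $\X'\times_\Y\Y'$, and proving $(f\circ g)_*=f_*\circ g_*$ requires commuting a push-forward past a pull-back across that square, i.e.\ the $\EPG$-level base-change identity (Lemma~\ref{lm:ppepg}); so part~\ref{opushcomp} is not ``direct'' and is in fact logically downstream of the base-change argument. For part~\ref{opushfiberprod} itself, the object space of the weak $2$-fiber product is $X_0\prescript{}{F_0}\times_{s}^{}Z_1\prescript{}{t}\times_{G_0}^{}Y_0$ (Lemma~\ref{lm:fibEPG}), not $X_0\times_{Z_0}Y_0$, so the square of object spaces is \emph{not} Cartesian and the classical base-change identity does not ``hold objectwise.'' The paper factors the projection $A_{10}$ through three genuinely Cartesian squares, applies classical base change there, and then needs $s_*t^*=t_*s^*$ and the vanishing of $s_*t^*$ on $\im(s_*-t_*)$ (Lemma~\ref{lm:Jwd}), together with the statement that $A_{10}^*\rho_X\cdot A_{20}^*\rho_Y$ is a partition of unity on the fiber product (Lemma~\ref{lm:partonfib}). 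None of these steps is formal, and your proposal gives no route around them.
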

It follows from Theorem~\ref{thm:main}~\ref{nt} and~\ref{opushcomp} that equivalent orbifolds with corners have isomorphic spaces of differential forms and currents. A direct proof of this fact follows from Lemma~\ref{lm:JFK}, which is one of the main steps in the proof of Theorem~\ref{thm:main}. Since the isomorphism between spaces of differential forms on equivalent orbifolds uses the push-forward operation, it is natural to define push-forward for relatively oriented proper morphisms, which include all equivalences between orbifolds.

The paper could be simplified considerably by only defining differential forms on oriented compact orbifolds. Especially the passage from morphisms of compact orbifolds to general proper morphisms requires some effort. Indeed, to define the push-forward operation for proper morphisms, we introduce the notion of differential forms with clean support, and it is necessary to prove their basic properties. For compact orbifolds, clean support coincides with the more familiar notion of compact support.

In Section~\ref{sec:curr}, we define the space of currents on an orbifold with corners $\X$ to be the continuous dual of the topological vector space of compactly supported differential forms $\Ac^*(\X)$ equipped with the weak$^*$ topology. We prove properties of push-forward and pull-back of currents analogous to those of Theorem~\ref{thm:main}.

From Theorem~\ref{thm:main}~\ref{nt} and~\ref{opushcomp}, we obtain Theorem~\ref{thm:funct} below.
Let \Orb{} denote the weak $2$-category of orbifolds, and let \Orbpr{} denote the subcategory of \Orb{} with the same objects and  relatively oriented proper submersions for morphisms. Let \Fdg{} denote the $2$-category where objects are differential graded Fr\'echet spaces, morphisms are continuous maps commuting with the differential and preserving the grading up to a shift, and $2$-morphisms are identities. Let \Fdga{} denote the $2$-category where objects are Fr\'echet differential graded algebras, morphisms are continuous differential graded algebra homomorphisms, and $2$-morphisms are identities.

\begin{thm}\label{thm:funct}
Differential forms give rise to two homomorphisms of weak $2$-categories,
\[
\Fu^* : \Orb \to \Fdga^{op}, \qquad \Fu_* :\Orbpr \to \Fdg,
\]
given on objects $\mathcal{\X}$ of $\Orb{}$ by
\[
\Fu^*(\X) = \Fu_*(\X) = A^*(\X),
\]
and on morphisms $f : \X \to \Y$ in $\Orb$ by
\[
\Fu^*(f) = f^*, \qquad \Fu_*(f) = f_*.
\]
Of course, any $2$-morphism in $\Orb{}$ is mapped by both $\Fu^*$ and $\Fu_*$ to the identity $2$-morphism.
\end{thm}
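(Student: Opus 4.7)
The plan is to verify the axioms of a homomorphism of weak $2$-categories directly, with Theorem~\ref{thm:main}~\ref{nt} and~\ref{opushcomp} as the two key inputs. Because every $2$-morphism in the target categories $\Fdga$ and $\Fdg$ is an identity, the coherence data of a weak $2$-functor landing in such a target collapses: associators, unitors, and composition-comparison $2$-cells on the target side are forced to be identities, and the coherence diagrams reduce to trivial equalities. Hence the verification reduces to four strict statements: (i) the assignments on objects and $1$-morphisms take values in the stated target categories; (ii) every $2$-morphism $\alpha : f \Rightarrow g$ of the source is sent to an identity, equivalently $f^* = g^*$ in the $\Fu^*$ case and $f_* = g_*$ in the $\Fu_*$ case; (iii) composition of $1$-morphisms is preserved strictly, so that $(f\circ g)^* = g^*\circ f^*$ and $(f\circ g)_* = f_*\circ g_*$; and (iv) identity $1$-morphisms are preserved strictly.

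Item (i) will follow from the basic properties of pull-back and push-forward established in the body of the paper: $A^*(\X)$ is a Fréchet dga, $f^*$ is a continuous dga homomorphism, and $f_*$ is continuous and preserves grading up to the relative-dimension shift. For $\Fu_*$ one also needs $f_*$ to commute with the differential, which, in view of the Stokes relation~\ref{thmstokes}, requires the fiberwise boundary term to vanish for every morphism of $\Orbpr{}$. Items (ii) and (iii) are respectively Theorem~\ref{thm:main}~\ref{nt} and~\ref{opushcomp}, and thus require no further work; note that (iii) for $\Fu^*$ already produces the expected order reversal, which is exactly why the target is $\Fdga^{op}$ rather than $\Fdga$. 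For (iv), one unwinds the constructions and checks that pulling back and pushing forward along the identity groupoid morphism act as the identity on forms.

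With (i)--(iv) established, the coherence constraints of $\Orb{}$ and $\Orbpr{}$---associator, left and right unitors, and the $2$-cells witnessing the interchange law---are themselves particular $2$-morphisms, so by (ii) they are sent to identities, and the pentagon, triangle, and interchange diagrams in the target degenerate to trivial equalities of identity morphisms. The final statement of the theorem, that every source $2$-morphism is mapped to the identity $2$-morphism, is then a tautology given (ii).

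The main obstacle I anticipate lies in applying (ii) to $\Fu_*$: Theorem~\ref{thm:main}~\ref{nt} gives $f_* = g_*$ only when the $2$-morphism $\alpha$ is itself relatively oriented, so one must verify that every $2$-morphism of $\Orbpr{}$---including all coherence $2$-cells---admits a canonical relative orientation that makes the hypothesis applicable. I expect this to follow from the observation that such coherence $2$-cells arise from equivalences of groupoids, for which a canonical relative orientation is provided by the trivial orientation on the zero-dimensional fibers; once this bookkeeping is carried out, the proof reduces to the invocations of Theorem~\ref{thm:main} outlined above.
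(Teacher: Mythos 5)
Your proposal follows the same route as the paper, which derives Theorem~\ref{thm:funct} directly from Theorem~\ref{thm:main}~\ref{nt} and~\ref{opushcomp} and offers no further argument; your observation that the coherence data collapses because every $2$-morphism of $\Fdga$ and $\Fdg$ is an identity is exactly the implicit content of that derivation, and your items (ii)--(iv) are the right reduction. Two of the loose ends you flag deserve comment. First, in item~(i) you assert that for $\Fu_*$ to land in $\Fdg$ the fiberwise boundary term must ``vanish for every morphism of \Orbpr{}.'' It does not: the projection $[0,1]\times\Y\to\Y$ is a relatively oriented proper submersion, and by the Stokes relation of Theorem~\ref{thm:main}~\ref{thmstokes} its push-forward fails to commute with $d$ because of the contribution of $\{0,1\}\times\Y$. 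The paper is silent on this point, so you have isolated an imprecision in the statement (or in the requirement that morphisms of $\Fdg$ commute with the differential) rather than a defect peculiar to your argument; but as written your step~(i) cannot be completed for $\Fu_*$ without either restricting attention to morphisms with empty fiberwise boundary or weakening the target category. Second, your concern about relatively orienting the coherence $2$-cells of \Orbpr{} is legitimate (the paper's own example shows that the identity $2$-morphism between two distinct relative orientations of the same functor is \emph{not} relatively oriented, and indeed the two push-forwards then differ by a sign), but your proposed fix via ``the trivial orientation on the zero-dimensional fibers'' is off target: the underlying map $\alpha:X_0\to Y_1$ of a $2$-morphism between submersions of positive relative dimension is not a local diffeomorphism and carries no canonical relative orientation of its own. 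The correct mechanism is equation~\eqref{eq:arelor}: since $s$ is a local diffeomorphism, $o^\alpha:=(o^s_c)^{-1}\circ o^{F_0}$ always exists, and what must be verified for each coherence cell is the compatibility $o^t_c\circ o^\alpha=o^{G_0}$ with the composite orientation on its target.
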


\subsection{Outline}
In Section~\ref{sec:owc}, we recall the definition of orbifolds with corners and morphisms thereof. We discuss the notions of submersions, proper morphisms, refinements, and relatively oriented morphisms.
We close the section with some remarks on the historical background of orbifolds as singular spaces.
In Section~\ref{sec:forms} we define the algebra of differential forms and the space of differential forms with clean support on an orbifold and construct explicit isomorphisms between the two. We define the operations of pull-back and push-forward along a proper submersion, and show that for a refinement the two operations are inverse to one another.
Section~\ref{sec:thm1} is devoted to a detailed proof of Theorem~\ref{thm:main}.
Section~\ref{sec:curr} defines currents and currents relative to the boundary and proves an analog of Theorem~\ref{thm:main} in that context.

\subsection{Acknowledgements}
We are grateful to P. Giterman, Y. Karshon, O. Kedar, and A. Yuval for helpful conversations. The first author was partially supported by ERC starting grant 337560 and ISF Grant 569/18.
The second author was partially supported by NSF grant DMS-163852, ISF grant 2793/21, and the Colton Foundation.

\section{Orientations and integration}\label{sec:oi}

\subsection{Manifolds with corners}\label{ssec:moc}

For $M$ a manifold with corners, the boundary $\partial M$ is defined as in~\cite[Definition~2.6]{Joyce2}. So, $\d M$ is again a manifold with corners and comes with a natural map $i_M : \d M \to M.$ We say a map is smooth if all its partial derivatives, including one-sided derivatives, exist and are continuous at every point. We say a map is strongly smooth if it is strongly smooth in the sense of~\cite[Definition~2.1(e)]{Joyce3}. We say a smooth map is a submersion if it is locally diffeomorphic to a projection of a direct product of manifolds with corners to one factor.
In particular, a smooth map induces a smooth map of the boundary, which we call the restriction to the boundary, the restriction of a strongly smooth map to the boundary is also strongly smooth, and the restriction of a submersion to the boundary is also a submersion. As usual, diffeomorphisms are smooth maps with a smooth inverse.

\begin{rem}
A map that is smooth in our terminology is called weakly smooth in~\cite[Definition 3.1]{Joyce2} and~\cite[Definition~2.1(a)]{Joyce3}.
Our definition of strongly smooth is identical to~\cite[Definition~2.1(e)]{Joyce3} but corresponds to smooth in~\cite[Definition 3.1]{Joyce2}.
Our notion of submersion corresponds to a horizontal submersion in the terminology of~\cite[Definition 19(a)]{Zernik2}. For strongly smooth maps it coincides with the submersion of~\cite[Definition 3.2(iv)]{Joyce2}.
\end{rem}

Transversality of strongly smooth maps of manifolds with corners is defined as in~\cite[Definition~6.1]{Joyce2}, so any strongly smooth map is tranverse to a strongly smooth submersion.
Fiber products of transverse strongly smooth maps exist by~\cite[Theorem~6.4]{Joyce2}.
Throughout the paper, we assume without further comment that all maps are smooth.

\subsubsection{Vertical and horizontal boundary}\label{sssec:vhbd}

Let $M,N,$ be smooth manifolds with corners, and let $f:M\to N$ be a strongly smooth map.
Following~\cite[Section 4]{Joyce2}, we decompose $\d M$ into horizontal and vertical parts with respect to $f$, as follows.
Denote by $S^1(N)$ the set of all points of depth 1 in $N$.
Recall that $\beta$ is a local boundary component at $y\in N$ if $\beta$ is a connected component of $S^1(N)$ such that $y\in \overline{\beta}$. Thus,
\[
M\prescript{}{f}\times_{i_{N}}^{}\d N
=
\left\{ \big(x,(y,\beta)\big)\,|\, f(x)=y=i_{N}(y,\beta)\right\}.
\]
Let $(x,(y,\beta))\in M\prescript{}{f}\times_{i_{N}}^{}\d N$ and let $V\subset N$ be a neighborhood of $y$ such that there is a boundary defining function $b:V\to [0,\infty)$ at $(y,\beta)$ in the sense of~\cite[Definition 2.14]{Joyce2}. By the strongly smooth assumption, either $b\circ f\equiv 0$ or $b\circ f:f^{-1}(V)\to [0,\infty)$ is a boundary defining function at $x$ for a unique local boundary component. Therefore, the subset
\[
\Xi
:=
\left\{\big(x,(y,\beta)\big)\,|\, b\circ f \mbox{ is boundary defining at }x \right\}
\subset M\prescript{}{f}\times_{i_{N}}^{}\d N
\]
comes with an inclusion
\[
\xi:\Xi\lrarr \d M
\]
and we write
\begin{gather*}
\d M= \dv M\sqcup \dh M,\\
\dh M=\im(\xi),
\quad
\dv M=\d M\setminus \im(\xi).
\end{gather*}
By~\cite[Proposition 4.3]{Joyce2}, $\dv M, \dh M,$ are closed and open in $\d M$ and thus are manifolds with corners. We denote the restrictions of $i_M$ by
\[
\iv_M:\dv M\lrarr M,
\qquad
\ih_M:\dh M\lrarr M.
\]
These come with canonical relative orientations induced by restricting $o^{i_M}$.

\begin{rem}
Our definition of $\dv M$ is $\d_+^f M$ in the notation of~\cite{Joyce2}.
\end{rem}

\begin{rem}
In the special case when $f$ is a strongly smooth proper submersion, $\dv$ recovers the fiberwise boundary, that is, $\dv M = \coprod_{y\in N}\d(f^{-1}(y))$.
\end{rem}

\subsection{Orientation conventions}\label{ssec:orientations}
For a diffeomorphism $f : M \to N$ of oriented manifolds with corners, we define $sgn(f)$ to be $1$ if $f$ preserves orientation and $-1$ otherwise. We use similar notation for isomorphisms of oriented vector spaces.
We follow~\cite[Convention~7.2(a)]{Joyce2} for orienting the boundary and~\cite[Convention~7.2(b)]{Joyce2} for orienting fiber products, as detailed in the following.

To orient boundary, let $M$ be an oriented orbifold with corners.
Let $p\in \d M$ and let $B$ be a basis for $T_{p}\d M$. Let $N\in T_{i_M(p)}M$ be the outward-pointing normal at $p.$ We say $B$ is positive if $N\circ B$ is a positive basis for $T_{i_M(p)}M$.

To orient fiber products, let $M, N,$ and $P,$ be oriented orbifolds with corners. Let $f:M\to N$ and $g:P\to N$ be transverse smooth maps, and consider the following pull-back diagram.
\begin{equation}\label{eq:pbd}
	\xymatrix{
	{M\times_N P}\ar[r]^(.6)p\ar[d]^q&
	{P}\ar[d]^{g}\\
	{M}\ar[r]^{f}&N
	}
\end{equation}
Let $(m,p)\in M\times P$ with $f(m)=g(p)$.
By the transversality assumption,
\[
F:=df_m \oplus - dg_p: T_mM\oplus T_pP\lrarr T_{f(m)}N
\]
is surjective, and by definition of fiber product, there is a natural isomorphism
\[
\psi:T_{(m,p)}(M\times_N P) \stackrel{\sim}{\lrarr}\Ker(F).
\]
So, we have a short exact sequence
\begin{equation}\label{eq:sesor}
0\lrarr T_{(m,p)}(M\times_N P) \stackrel{\psi}{\lrarr} T_mM\oplus T_pP \stackrel{F}{\lrarr} T_{f(m)}N \lrarr 0,
\end{equation}
and splitting gives an isomorphism
\begin{equation}\label{eq:fpor}
\varphi:T_mM\oplus T_pP\stackrel{\sim}{\lrarr}T_{(m,p)}(M\times_N P)\oplus T_{f(m)}N.
\end{equation}
We take the orientation on $T_{(m,p)}(M\times_N P)$ to be the one that makes $sgn(\varphi)=(-1)^{\dim P\cdot \dim N}$.

For a submersion of manifolds with corners $h: Q \to S$ and $y \in S,$ we orient the fiber $h^{-1}(y)$ by identifying it with the fiber product $\{y\} \times_S Q.$

The preceding orientation conventions determine the signs in Proposition~\ref{prop:proppp} below as well as Stokes' theorem, Proposition~\ref{stokes}.

\begin{rem}\label{rem:orient}
Our convention for orientation of fiber products agrees with that of~\cite{FOOO} in case $f$ is a submersion.

\end{rem}

Let $M$ be a manifold with corners and let $\widetilde M \to M$ denote the orientation cover considered as a $\Z/2$ principal bundle. Let $f:M \to N$ be a map of manifolds with corners. The relative orientation bundle of $f$ is the $\Z/2$ principle bundle
\[
\LL_f := Hom_{\Z/2}(\widetilde M,f^*\widetilde N)\to M.
\]
Alternatively, we can view $\LL_f$ as the quotient of the frame bundle of the line bundle
\[
Hom(\det TM, f^* \det TN) \simeq \det TM^\vee \otimes \det TN
\]
by the multiplicative group $\R_{>0}.$
A \textbf{relative orientation} of $f$ is a section of $\LL_f.$ So, orientations on $M$ and $N$ induce a relative orientation of $f$, but $f$ may be relatively oriented even if neither $M$ nor $N$ are orientable. If $N$ is oriented, a relative orientation of $f$ induces an orientation of $M.$ Thus, in the context of \'etale proper groupoids and orbifolds below, we suffice with defining relative orientations for boundaries and fiber products. Absolute orientations can be deduced from them.

Let $g : N \to Q$ be another map of manifolds with corners. Observe that
\[
\LL_{g\circ f} = \LL_f\times_{\Z/2} f^*\LL_g.
\]
So, relative orientations $o^f$ and $o^g$ of $f$ and $g$ respectively induce a relative orientation $o^g \circ o^f$
of $g \circ f.$
Also, a local diffeomorphism $f : M \to N$ comes with a \textbf{canonical relative orientation}, which we denote by $o^f_c.$ At a point $m \in M,$ the canonical relative orientation is given as follows. Let $v_1,\ldots,v_k \in T_mM$ be a basis, let $u_1 = df_m(v_1),\ldots,u_k = df_m(v_k),$ be the corresponding basis of $T_{f(m)}N,$ and let $v_1^*,\ldots,v_k^*,$ be the dual basis. Then,
\[
(o^f_c)_m = [v_1^*\wedge\cdots\wedge v_k^* \otimes u_1\wedge \cdots \wedge u_k].
\]
If $g : N \to Q$ is also a local diffeomorphism, then $o^g_c \circ o^f_c = o^{g \circ f}_c.$
The boundary map $i_M:\d M\to M$ comes with the \textbf{relative boundary orientation} $o^{i_M}$ as follows.
Let $m\in \im(i_M)$ and $\beta$ a local boundary component at $m$. Let $N\in T_mM$ the outward pointing normal, $u_1,\ldots,u_{n-1}\in T_{(m,\beta)}(\d M)$ a basis, and $u_1^*,\ldots, u_{n-1}^*,$ the dual basis. Then
\[
(o^{i_M})_{(m,\beta)} =
[u_1^*\wedge \cdots \wedge u_{n-1}^*\wedge N\wedge u_1\wedge \cdots \wedge u_{n-1}].
\]

Relative orientations pull-back to fiber products as follows. Consider pull-back diagram~\eqref{eq:pbd}. The isomorphism $\varphi$ from~\eqref{eq:fpor} induces an isomorphism
\[
\det(T_mM) \otimes \det (T_pP) \stackrel{\sim}{\lrarr} \det(T_{(m,p)}(M \times_N P))\otimes\det(T_{f(m)}N),
\]
which in turn induces an isomorphism
\[
\det(T_{(m,p)}(M\times_N P))^\vee\otimes\det(T_m M) \stackrel{\sim}{\lrarr}\det(T_{f(m)}N)\otimes\det(T_pP)^\vee.
\]
Composing with the canonical isomorphism
\[
\det(T_{f(m)}N) \otimes \det(T_pP)^\vee \overset{\sim}{\lrarr} \det(T_pP)^\vee \otimes \det(T_{f(m)}N)
\]
given by $y\otimes v \mapsto (-1)^{\dim P \cdot \dim N} v \otimes y,$ we obtain an isomorphism
\begin{equation}\label{eq:switchdetfpor}
\det(T_{(m,p)}(M\times_N P))^\vee\otimes\det(T_m M) \stackrel{\sim}{\lrarr}\det(T_pP)^\vee\otimes\det(T_{f(m)}N).
\end{equation}
Letting $(m,p)$ vary in $M\times_NP,$ we obtain an isomorphism of $\Z/2$ principle bundles
\[
\bar \varphi : \LL_q \to f^*\LL_g.
\]
So, given a relative orientation $o^g$ of $g,$ and letting $f^{-1}o^g$ denote the corresponding section of $f^*\LL_g,$ we define the \textbf{pull-back relative orientation} by
\[
f^*o^g = \bar\varphi^{-1}(f^{-1}o^g).
\]
When the relevant manifolds are oriented, the relative boundary orientation and the pull-back relative orientation agree with the relative orientations induced from the orientations of the boundary and the fiber product respectively. Similarly, in the situation of diagram~\eqref{eq:pbd}, given a relative orientation $o^f$ of $f,$ we can define the \textbf{transpose pull-back relative orientation} $^t\!g^* o^f$ of $p$ so that when $M,N$ and $P,$ are oriented, the relative orientation $^t\! g^*o^f$ is induced by the orientations of $M \times_N P$ and $P.$
\begin{lm}\label{lm:pb&c}
Suppose the map $g$ in the pull-back diagram~\eqref{eq:pbd} is a local diffeomorphism. Then, the map $q$ is also a local diffeomorphism and $o^q_c = f^* o^g_c.$
\end{lm}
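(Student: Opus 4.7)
The plan is to verify both parts pointwise using bases adapted to the local diffeomorphism $g$.

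First, I would show $q$ is a local diffeomorphism. Since $dg_p$ is an isomorphism at every $p \in P$, the surjection $F = df_m \oplus (-dg_p)$ in the short exact sequence~\eqref{eq:sesor} has kernel $\{(v,(dg_p)^{-1}(df_m(v))) : v \in T_mM\}$, on which the projection to $T_mM$ is an isomorphism. Since $dq$ agrees with this projection through $\psi$, it is an isomorphism, so $q$ is a local diffeomorphism and $\dim(M\times_N P) = \dim M =: k$.

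For the orientation identity, I would work at a point $(m,p)$ with $f(m) = g(p)$ and fix adapted bases: pick a basis $w_1,\ldots,w_k$ of $T_mM$ and set $y_i := df_m(w_i)$, $x_i := (dg_p)^{-1}(y_i)$, and $\tilde w_i := \psi^{-1}(w_i, x_i)$. By construction $dq(\tilde w_i) = w_i$, $dp(\tilde w_i) = x_i$, and $dg_p(x_i) = y_i$. Writing $w, x, y, \tilde w$ for the corresponding top wedge products and $w^\vee$, etc., for their duals, the definition of the canonical relative orientation gives $(o^q_c)_{(m,p)} = [\tilde w^\vee \otimes w]$ and $(o^g_c)_p = [x^\vee \otimes y]$.

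Next, I would compute $\varphi$ using the section of $F$ defined by $y_i \mapsto (w_i, 0)$: one checks $\varphi(w_i, 0) = (0, y_i)$ and $\varphi(0, x_i) = (\tilde w_i, -y_i)$, so the matrix of $\varphi$ in these bases is $\begin{pmatrix} 0 & I \\ I & -I \end{pmatrix}$ with $\det(\varphi) = (-1)^k$. Tracing the three steps in the construction of~\eqref{eq:switchdetfpor} applied to the generator $\tilde w^\vee \otimes w$: step one contributes $\det(\varphi) = (-1)^k$; step two, the natural rearrangement of tensor factors in the one-dimensional setting, simply carries this factor through to $(-1)^k y \otimes x^\vee$; and the swap in step three contributes $(-1)^{\dim P \cdot \dim N} = (-1)^{k^2}$. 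The total sign is $(-1)^{k+k^2} = (-1)^{k(k+1)} = +1$, so $\bar\varphi([\tilde w^\vee \otimes w]) = [x^\vee \otimes y]$, which is exactly $(o^q_c)_{(m,p)} = \bar\varphi^{-1}(f^{-1}(o^g_c)_{(m,p)}) = f^*(o^g_c)_{(m,p)}$.

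The main obstacle is the sign bookkeeping in~\eqref{eq:switchdetfpor}. The signs conspire to cancel precisely because, when $g$ is a local diffeomorphism, all four vector spaces $T_mM$, $T_pP$, $T_{f(m)}N$, $T_{(m,p)}(M\times_N P)$ have the same dimension $k$, so every sign factor reduces to a power of $(-1)^k$ or $(-1)^{k^2}$, and $k(k+1)$ is always even.
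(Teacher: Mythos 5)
Your first paragraph (showing $q$ is a local diffeomorphism) is fine, and your overall strategy --- choose an explicit splitting of~\eqref{eq:sesor} adapted to $(dg_p)^{-1}\circ df_m$ and track the sign through~\eqref{eq:switchdetfpor} --- is the same as the paper's. But there is a genuine gap in the orientation computation: you assert that all four spaces $T_mM$, $T_pP$, $T_{f(m)}N$, $T_{(m,p)}(M\times_N P)$ have the same dimension $k$. The hypothesis is only that $g$ is a local diffeomorphism; $f$ is an arbitrary transverse smooth map in diagram~\eqref{eq:pbd}. So $\dim P=\dim N=:l$ and $\dim(M\times_N P)=\dim M=:k$, but $k$ and $l$ need not be equal, and even when they are, $df_m$ need not be invertible. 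Your adapted bases collapse at exactly this point: you need $y_i=df_m(w_i)$ to be a basis of $T_{f(m)}N$ in order to define the section $y_i\mapsto(w_i,0)$ of $F$, and you need $x_i=(dg_p)^{-1}(y_i)$ to be a basis of $T_pP$ in order for $[x^\vee\otimes y]$ to represent $(o^g_c)_p$. Both fail unless $df_m$ is an isomorphism, i.e., unless $f$ is also a local diffeomorphism --- and the lemma is used in the paper in situations (e.g., the fiber product $X_0\times_{Y_0}Y_1$ over $F_0:X_0\to Y_0$ in Lemma~\ref{lm:FJF = J}) where only the leg $g$ is a local diffeomorphism.

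The repair is to decouple the two bases, which is what the paper's proof does: take an arbitrary basis $u_1,\ldots,u_k$ of $T_mM$ and an arbitrary basis $v_1,\ldots,v_l$ of $T_pP$; since $g$ is a local diffeomorphism, $y_j:=dg_p(v_j)$ is a basis of $T_{f(m)}N$, and the splitting $r(u,v)=(u,(dg_p)^{-1}df_m(u))$ lifts the $u_i$ to a basis $w_i:=r(u_i)$ of $\Ker(F)$. Then $\det\varphi$ sends $u_1\wedge\cdots\wedge u_k\wedge v_1\wedge\cdots\wedge v_l$ to $(-1)^l\,w_1\wedge\cdots\wedge w_k\wedge y_1\wedge\cdots\wedge y_l$, and the factor $(-1)^{\dim P\cdot\dim N}=(-1)^{l^2}$ built into~\eqref{eq:switchdetfpor} combines with this to give $(-1)^{l+l^2}=1$. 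So the parity that matters is $l(l+1)$, not $k(k+1)$; with that correction your calculation becomes the paper's proof.
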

\begin{proof}
It is well known that if $g$ is a local diffeomorphism, then so is $q.$ We check the equality of orientations pointwise. Let $r : T_mM \oplus T_pP \to T_{(m,p)}(M \times_N P)$ satisfy $r \circ \psi = \Id.$ Any such $r$ gives rise to an isomorphism $\varphi$ as in~\eqref{eq:fpor} given by
\[
\varphi(u,v) = r(u,v) \oplus F(u,v), \qquad u \in T_{m}M, \quad v \in T_pP.
\]
The main step in the proof is to compute
\[
\det \varphi : \det(T_mM \oplus T_pP) \to \det(T_{(m,p)}(M \times_N P) \oplus T_{f(m)}N).
\]
The orientation does not depend on the choice of $r,$ so we choose $r$ to facilitate the computation.
Since $g$ is a local diffeomorphism, we can define
$
G : T_{m}M \to T_pP
$
by
\[
G(u) = (dg_p)^{-1} \circ df_m(u),
\]
and take $r(u,v) = r(u) = (u,G(u)).$
Let $u_1,\ldots,u_k \in T_mM$ and $v_1,\ldots,v_l \in T_pP$ be bases. Since $g$ is local diffeomorphism, $y_1 = dg_p(v_1),\ldots,y_l = dg_p(v_l) \in T_{f(m)}N$ is a basis. So,
\begin{align}
&\det \varphi(u_1 \wedge \cdots \wedge u_k \wedge v_1 \wedge \cdots \wedge v_l) =  \label{eq:cdetfpor} \\
& \qquad \qquad  = \varphi(u_1) \wedge \cdots \wedge \varphi(u_k) \wedge \varphi(v_1) \wedge \cdots \wedge \varphi(v_l) \notag \\
& \qquad \qquad  = (r(u_1) \oplus df_m(u_1))\wedge \cdots \wedge (r(u_k) \oplus df_m(u_k)) \wedge (-dg_p(v_1)) \wedge \cdots \wedge (-dg_p(v_l)) \notag \\
&  \qquad \qquad = (-1)^lr(u_1) \wedge \cdots \wedge r(u_k) \wedge y_1 \wedge \cdots \wedge y_l. \notag
\end{align}
Observe that $w_1 = r(u_1),\ldots,w_k = r(u_k) \in T_{(m,p)}(M \times_N P)$ is a basis, and let $w_1^*,\ldots,w_k^* \in T_{(m,p)}(M \times_N P)^\vee$ denote the dual basis. Let $v_1^*,\ldots,u_l^* \in T_pP^\vee$ denote the dual basis of $v_1,\ldots,v_l.$
Since
\[
\dim P \cdot \dim N + l = l^2 + l \equiv 0 \pmod 2,
\]
it follows from equation~\eqref{eq:cdetfpor} that the isomorphism~\eqref{eq:switchdetfpor} is given by
\begin{equation}\label{eq:cswitchdetfpor}
w_1^* \wedge \cdots \wedge w_k^* \otimes u_1 \wedge \cdots \wedge u_k  \mapsto  v_1^*\wedge \cdots\wedge v_l^* \otimes y_1 \wedge \cdots \wedge y_l.
\end{equation}
Since $dg_p(v_i) = y_i$ and $dq_{(m,p)}(w_i) = u_i,$ the products $v_1^*\wedge \cdots\wedge v_l^* \otimes y_1 \wedge \cdots \wedge y_l$ and $w_1^* \wedge \cdots \wedge w_k^* \otimes u_1 \wedge \cdots \wedge u_k$ represent the canonical orientations of $g$ and $q$ respectively.
\end{proof}

\subsection{Integration properties}
Let $f: M\to N$ be a relatively oriented proper submersion with fiber dimension $\rdim f =r$. Denote by $f_* : A^*(M) \to A^*(N)[-r]$ the push-forward of forms along $f$, that is, integration over the fiber. We will need the following properties of $f_*$ formulated in~\cite[Section 3.1]{KupfermanSolomon}. 

\begin{prop}\label{prop:proppp}\leavevmode
\begin{enumerate}
	\item \label{normalization}
	Let $M$ be oriented, let $f:M\to pt$ and let $\alpha\in A^m(M)$ with compact support. Then
	\[
	f_*\alpha=\begin{cases}
	\int_M\alpha,& m=\dim M,\\
	0,&\text{otherwise}.
	\end{cases}
	\]
	\item\label{prop:pushcomp}
		Let $g: P\to M$, $f:M\to N,$ be proper submersions with relative orientations $o^g$ and $o^f$ respectively.
Then, equipping $f\circ g$ with the relative orientation $o^f \circ o^g,$ we have
		\[
		f_*\circ g_*=(f\circ g)_*.
		\]
	\item\label{prop:pushpull}
		Let $f:M\to N$ be a relatively oriented proper submersion, and let $\alpha\in A^*(N),$ $\beta\in A^*(M)$. Then
		\[
		f_*(f^*\alpha\wedge\beta)=\alpha\wedge f_*\beta.
		\]
	\item\label{prop:pushfiberprod}
		Let
		\[
		\xymatrix{
		{M\times_N P}\ar[r]^{\quad p}\ar[d]^{q}&
        {P}\ar[d]^{g}\\
        {M}\ar[r]^{f}&N
		}
		\]
		be a pull-back diagram, where $f$ and $g$ are strongly smooth and $g$ is a proper submersion. Let $o^g$ be a relative orientation of $g$ and equip $q$ with the pull-back orientation $f^*o^g.$ Let $\alpha\in A^*(P).$ Then
		\[
		q_*p^*\alpha=f^*g_*\alpha.
		\]
\end{enumerate}
\end{prop}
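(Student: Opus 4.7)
The plan is to reduce all four statements to Fubini-type assertions on products of half-spaces by exploiting that a submersion is locally a direct-product projection, and then carefully tracking the orientation conventions of Section~\ref{ssec:orientations}. Choose an open cover $\{U_\a\}$ of $M$ together with commutative squares
\[
\xymatrix{
U_\a \ar[r]^(.4){\phi_\a}_(.4){\sim} \ar[d]_{f|_{U_\a}} & V_\a \times F_\a \ar[d]^{\pi_1} \\
f(U_\a) \ar[r]^(.5){\psi_\a}_(.5){\sim} & V_\a
}
\]
realizing $f$ locally as a projection, and a subordinate partition of unity $\{\rho_\a\}$ on $M.$ Writing $\alpha = \sum_\a \rho_\a \alpha,$ it suffices to verify each identity for forms supported in a single chart, where $f_*$ is literal integration of the $F_\a$-variables against the relative orientation and all formulas become classical.

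Part~\ref{normalization} is then immediate: when $N=pt$ the fiber is $M$ itself and fiber integration coincides with $\int_M.$ For part~\ref{prop:pushpull}, in a product chart $f^*\alpha$ has no $F_\a$-components, so it slides out of the $F_\a$-integral, yielding the projection formula. For part~\ref{prop:pushcomp}, choose compatible trivializations so that $g$ looks like $V\times F\times G\to V\times F$ and $f$ like $V\times F\to V;$ ordinary Fubini gives the equality of iterated integrals, and a short check, using the description of $o^f\circ o^g$ in terms of $\det TM^\vee\otimes\det TN$ tensor factorization, shows that the composed relative orientation is exactly the one making Fubini sign-free.

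For part~\ref{prop:pushfiberprod}, pick product charts for $g$ and cover $M\times_N P$ by the induced product charts on the fiber product; there the diagram becomes the Cartesian square of a projection $U\times F\to U$ pulled back along an open embedding $f:U'\to U,$ which is exactly the setting where Fubini applies pointwise. The content of this step is therefore orientation-theoretic: one must verify that the pull-back relative orientation $f^*o^g$ defined via the isomorphism $\bar\varphi$ in~\eqref{eq:switchdetfpor} is precisely what makes the pointwise fiber integration commute with $f^*.$ The sign $(-1)^{\dim P\cdot \dim N}$ in the definition of the fiber-product orientation is designed to absorb the permutation of $\det(T_pP)^\vee$ past $\det(T_{f(m)}N),$ so Lemma~\ref{lm:pb&c} and its proof furnish the model computation one imitates.

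The main obstacle is bookkeeping, not content: parts~\ref{normalization}--\ref{prop:pushpull} are essentially definitional once one is in product charts, but~\ref{prop:pushcomp} and~\ref{prop:pushfiberprod} require showing that the orientation conventions of Section~\ref{ssec:orientations} are the unique ones compatible with Fubini, and this is where all the signs $(-1)^{\dim P\cdot\dim N}$ get used. As the proposition is quoted verbatim from~\cite{KupfermanSolomon}, one can alternatively cite the proofs there, having checked that the present orientation conventions agree with theirs (see Remark~\ref{rem:orient}).
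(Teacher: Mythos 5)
The paper offers no proof of this proposition at all: it is stated as a quotation of \cite[Section 3.1]{KupfermanSolomon}, with the surrounding text only asserting that the orientation conventions of Section~\ref{ssec:orientations} fix the signs. So your sketch is, by necessity, a different route from the paper's, and your closing sentence (cite \cite{KupfermanSolomon} after matching conventions via Remark~\ref{rem:orient}) is in fact exactly what the paper does. Your chart-based argument is the standard and correct one: partition of unity subordinate to product charts, Fubini, and the observation that $f^*\alpha$ carries no fiber components for the projection formula. Two caveats. First, in part~\ref{prop:pushfiberprod} the map $f$ is an arbitrary strongly smooth map, not an ``open embedding'' as you write; locally the square is the pull-back of the projection $V\times F\to V$ along $f|_{f^{-1}(V)}:f^{-1}(V)\to V$, and the identity reduces to $\int_F (f\times\Id_F)^*\alpha = f^*\int_F\alpha$, which holds for any smooth $f$ --- so the slip is harmless but should be corrected. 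Second, the sign verifications for parts~\ref{prop:pushcomp} and~\ref{prop:pushfiberprod} (that $o^f\circ o^g$ and $f^*o^g$ are precisely the Fubini-compatible orientations) are announced rather than carried out; since the paper's own Lemma~\ref{lm:pb&c} performs the model computation only in the local-diffeomorphism case, a complete write-up would need the analogous determinant computation for a general submersion, or else should simply defer to \cite{KupfermanSolomon} as the paper does.
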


Properties \eqref{normalization},\eqref{prop:pushpull}, and~\eqref{prop:pushfiberprod}, uniquely determine $f_*.$ Furthermore, we have the following generalization of Stokes' theorem. We have corrected the sign of~\cite[Section 3.1]{KupfermanSolomon}.
\begin{prop}[Stokes' theorem]\label{stokes}
Let $f:M\to N$ be a strongly smooth proper submersion with relative orientation $o^f$ and $\dim M=s$, and let $\xi\in A^t(M)$. Then
\[
d (f_*\xi)=f_*(d \xi)+(-1)^{s+t}\big(f\circ \iv_M \big)_*\xi,
\]
where
$(f\circ \iv_M)_*$ is taken with respect to the induced relative orientation $o^f\circ o^{i_M}$.
\end{prop}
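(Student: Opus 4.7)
The plan is to reduce to an explicit local computation and then apply the fundamental theorem of calculus in the fiber direction. Using a partition of unity on $M$ subordinate to a cover by charts in which $f$ takes the form of a projection $\pi:U\times V\to V$, where $U$ is a manifold-with-corners model for the fiber and $V\subset\R^n_{\geq 0}$ is open with $n=\dim N$, one reduces by linearity of $d$, $f_*$, and push-forward along $f\circ\iv_M$ to the case of a form supported in such a chart. In such a chart, $\dv M$ corresponds to $\d U\times V$ while $\dh M$ corresponds to $U\times \d V$ and plays no role in the identity.

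Next, in coordinates $(x^1,\ldots,x^r;y^1,\ldots,y^n)$ on $U\times V$, decompose $\xi=\xi'+\xi''$, where $\xi'$ collects the monomials of fiber degree equal to $r:=\dim U$ and $\xi''$ collects those of fiber degree strictly less than $r$. For $\xi'$, the restriction to $\dv M$ vanishes, since any top-fiber-degree form pulls back trivially along $\d U\hookrightarrow U$; the identity thus reduces to $d(f_*\xi')=f_*(d\xi')$, which is differentiation under the integral sign applied to an expression of the form $f_*\xi'=\bigl(\int_U a(x,y)\,dx^1\wedge\cdots\wedge dx^r\bigr)\cdot\beta(y)$. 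For $\xi''$, one has $f_*\xi''=0$, so the identity becomes $f_*(d\xi'')+(-1)^{s+t}(f|_{\dv M})_*\xi''=0$; only the fiber-differential part $\sum_l(\d/\d x^l)\,dx^l\wedge\cdots$ of $d\xi''$ contributes to $f_*(d\xi'')$, and its integral over $U$ is evaluated on $\d U$ by the fundamental theorem of calculus, producing $(f|_{\dv M})_*\xi''$ up to a universal sign.

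The main obstacle is the verification of the sign $(-1)^{s+t}$. Four sign contributions must be tracked: the graded Leibniz rule $d(\alpha\wedge f^*\beta)=d\alpha\wedge f^*\beta+(-1)^{|\alpha|}\alpha\wedge f^*d\beta$, which intervenes whenever $d$ is moved past a fiber factor; the anticommutation $dx^I\wedge f^*dy^j=(-1)^{|I|}f^*dy^j\wedge dx^I$ needed to place monomials in a standard fiber-then-base ordering before applying $f_*$; the relative boundary orientation of Section~\ref{ssec:orientations}, which places the outward normal before the tangent vectors of $\d U$ and thus differs from a naive product orientation of $\d U\times V$ by a sign depending on $n$; and the fiber-product sign convention from~\eqref{eq:fpor} entering the composite $o^f\circ o^{i_M}$. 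Combining these contributions on a model monomial $\xi=a(x,y)\,dx^I\wedge dy^J$, one verifies that the residual sign is precisely $(-1)^{s+t}$, after which the general case follows by linearity and the partition of unity. The bookkeeping is the only substantive difficulty; a convenient sanity check is to evaluate both sides on the two extreme cases $M=[0,1]$, $N=\{\ast\}$ with $t=0$ and $M=[0,1]\times\R$, $N=\R$ with $t=0$, which already pin down the dependence on $s$ and on $t$ separately.
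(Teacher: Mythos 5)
The paper does not actually prove Proposition~\ref{stokes}: it is quoted from Section~3.1 of Kupferman--Solomon, with the remark that the sign of that reference has been corrected. So there is no internal proof to compare against, and your proposal is in effect supplying the argument the paper outsources.

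Your strategy is the standard and correct one: localize with a partition of unity to a product chart $U\times V\to V$, split $\xi$ by fiber degree into $\xi'$ (fiber degree $r=\dim U$) and $\xi''$ (fiber degree $<r$), observe that $\xi'|_{\dv M}=0$ and $f_*\xi''=0$, and prove the two resulting identities by differentiation under the integral sign and by the fundamental theorem of calculus in the fiber, respectively. The structural reductions are all sound, and you correctly identify the four sources of sign (Leibniz, reordering of fiber and base factors, the outward-normal-first boundary convention of Section~\ref{ssec:orientations}, and the fiber-product convention of~\eqref{eq:fpor}).

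The gap is that the sign computation is precisely the nontrivial content of this proposition --- the paper explicitly flags that the cited reference got it wrong --- and your proposal asserts rather than performs it: ``one verifies that the residual sign is precisely $(-1)^{s+t}$'' is the entire theorem in this context. A complete proof must carry out the monomial computation for $\xi=a(x,y)\,dx^I\wedge dy^J$ with $|I|=r-1$ and exhibit the sign as a function of $s$ and $t$ under the paper's conventions. Relatedly, your proposed sanity checks do not do what you claim: both test cases have $t=0$, so they constrain the dependence on $s$ but give no information about the dependence on $t$. To pin down the $t$-dependence you would need, e.g., $M=[0,1]\times\R\to\R=N$ with $\xi=a(x,y)\,dy$ (so $t=1$, fiber degree $0$), compared against the $t=0$ case on the same $M$. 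With that computation added, the proof would be complete.
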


Below are some consequences of the properties of integration above that we will need.

\begin{lm}\label{lm:flipor}
Consider the pull-back diagram~\eqref{eq:pbd} with $f,g,$ strongly smooth and transverse, and let $o^f$ be a relative orientation of $f.$ Consider also the following fiber product diagram.
\[
\xymatrix{
 {P\times_N M} \ar[r]^(.6){\hat{q}}\ar[d]^{\hat{p}}  & M \ar[d]^{f}\\
 P \ar[r]^{g} & N
}
\]
Let $\theta : M \times_N P \to P \times_N M$ be the canonical map, so $\hat p \circ \theta = p.$ Then, the pull-back relative orientation $g^* o^f$ of $\hat p$ and the transpose pull-back relative orientation $^t\!g^* o^f$ of $p$ are related as follows:
\[
g^* o^f \circ o_c^\theta = (-1)^{\rdim f \cdot \rdim g}\cdot\,^t\!g^* o^f.
\]
\end{lm}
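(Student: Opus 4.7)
The plan is to verify the identity pointwise at each $(m,p) \in M \times_N P.$ Since $(\LL_p)_{(m,p)}$ is a $\Z/2$-torsor, the two relative orientations in question differ by a sign, which can be determined after locally choosing orientations on $M, N,$ and $P.$ The proof thus reduces to a linear algebra computation with oriented vector spaces.

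In the oriented case, $^t\!g^* o^f$ is by definition the relative orientation of $p$ induced from the fiber product orientation on $M \times_N P$ and the orientation of $P;$ similarly, $g^* o^f$ is the relative orientation of $\hat p$ induced from the fiber product orientation on $P \times_N M$ and the orientation of $P;$ and $o_c^\theta$ corresponds to $sgn(d\theta_{(m,p)})$ for $d\theta$ viewed as an isomorphism of oriented vector spaces. Thus $g^* o^f \circ o_c^\theta$ corresponds to the sign $sgn(d\theta),$ and the identity reduces to
\[
sgn(d\theta_{(m,p)}) = (-1)^{\rdim f \cdot \rdim g}.
\]

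To compute this sign, let $n = \dim N$ and let $\sigma: T_mM \oplus T_pP \to T_pP \oplus T_mM$ be the swap, so $sgn(\sigma) = (-1)^{\dim M \cdot \dim P}.$ With $F(u,v) = df_m(u) - dg_p(v)$ and $F'(v,u) = dg_p(v) - df_m(u),$ the relation $F' \circ \sigma = -F$ shows $\sigma$ restricts to an isomorphism $\sigma_0: \ker F \to \ker F'$ satisfying $\psi' \circ d\theta = \sigma_0 \circ \psi;$ hence $sgn(d\theta) = sgn(\sigma_0)$ when $\ker F$ and $\ker F'$ carry the respective fiber product orientations via $\psi, \psi'.$ Choosing a splitting $s$ of $F$ and the splitting $s' = -\sigma \circ s$ of $F',$ a direct check yields the commutative diagram
\[
\xymatrix{
{\ker F \oplus T_{f(m)}N} \ar[r]^(.58){\varphi^{-1}} \ar[d]_{\sigma_0 \oplus (-\Id)} & {T_mM \oplus T_pP} \ar[d]^{\sigma} \\
{\ker F' \oplus T_{f(m)}N} \ar[r]^(.58){\varphi'^{-1}} & {T_pP \oplus T_mM}
}
\]
where the $-\Id$ factor on $T_{f(m)}N$ is forced by $F' \circ \sigma = -F.$

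Comparing signs in the diagram, and using $sgn(\varphi) = (-1)^{\dim P \cdot n}$ and $sgn(\varphi') = (-1)^{\dim M \cdot n}$ from the fiber product orientation convention, one obtains
\[
sgn(\sigma_0) = (-1)^{\dim M \cdot \dim P + \dim P \cdot n + \dim M \cdot n + n}.
\]
Since $\rdim f \cdot \rdim g = (\dim M - n)(\dim P - n) \equiv \dim M \cdot \dim P + \dim M \cdot n + \dim P \cdot n + n \pmod 2$ (using $n^2 \equiv n$), this equals $(-1)^{\rdim f \cdot \rdim g},$ completing the proof. The main obstacle is correctly tracking the $-\Id$ factor on the $T_{f(m)}N$ summand; it is precisely this sign that produces the extra $(-1)^n$ required to match the mod-$2$ expansion of $\rdim f \cdot \rdim g.$
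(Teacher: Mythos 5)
Your proof is correct, and it is genuinely more self-contained than the paper's: the paper disposes of this lemma in one line by citing Joyce (Proposition~7.5(a) and Remark~7.6(ii) of~\cite{Joyce2}), which is exactly the statement that $X\times_Z Y$ and $Y\times_Z X$ differ in orientation by $(-1)^{(\dim X-\dim Z)(\dim Y - \dim Z)}$. You instead reduce to the oriented, pointwise case and carry out the linear algebra directly against the paper's own convention $sgn(\varphi)=(-1)^{\dim P\cdot\dim N}$. The key steps all check out: $F'\circ\sigma=-F$ correctly forces the $-\Id$ on the $T_{f(m)}N$ summand (and $s'=-\sigma\circ s$ is indeed a section of $F'$, with independence of the induced orientation from the choice of splitting already noted in the proof of Lemma~\ref{lm:pb&c}); the sign bookkeeping $sgn(\sigma_0)=(-1)^{\dim M\dim P+\dim P\,n+\dim M\,n+n}$ is right; and the mod-$2$ identification with $\rdim f\cdot\rdim g$ is correct (sanity checks: $n=0$ gives the plain product swap sign $(-1)^{\dim M\dim P}$, and $g=\Id$ gives $+1$). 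What your approach buys is a verification that the cited sign is actually compatible with this paper's fiber-product orientation convention — a nontrivial point, since such conventions vary between references — at the cost of about a page of computation that the paper avoids. The one cosmetic point worth tightening is the reduction in your second paragraph: you should say explicitly that, relative to the locally chosen orientations, $g^*o^f$ and $^t\!g^*o^f$ both have sign $+1$ (being the induced relative orientations) while $o^\theta_c$ has sign $sgn(d\theta)$, and that composition of relative orientations multiplies these signs; that is what justifies the displayed reduction to $sgn(d\theta_{(m,p)})=(-1)^{\rdim f\cdot\rdim g}$.
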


\begin{proof}
This follows from an argument similar to the proof of Proposition 7.5(a) of~\cite{Joyce2} given in Remark~7.6(ii).
\end{proof}

We will use the following variation of~\cite[Lemma 5.4]{Solomon2018}.
\begin{lm}\label{lm:pp}
Let $f : M \to M$ be a diffeomorphism and let $\alpha \in A^*(M).$ Considering the push-forward along $f$ with respect to the canonical relative orientation, we have
\[
f^*\alpha = f^{-1}_*\alpha.
\]
\end{lm}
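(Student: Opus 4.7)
The plan is to apply the base change formula, Proposition~\ref{prop:proppp}\eqref{prop:pushfiberprod}, to the pullback of $f$ along itself in order to show that $f^*\circ f_*=\Id$, and then use composition of push-forwards to conclude that $f^*=f_*^{-1}=(f^{-1})_*$.

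Concretely, I would form the pullback square
\[
\xymatrix{
M\times_M M \ar[r]^(.6){p}\ar[d]^{q} & M\ar[d]^{f}\\
M \ar[r]^{f} & M.
}
\]
Since $f$ is a diffeomorphism, $M\times_M M$ is the diagonal, canonically identified with $M$ via $(x,x)\mapsto x$, under which both $p$ and $q$ become the identity map $\Id_M$. Because $f$ is in particular a local diffeomorphism, Lemma~\ref{lm:pb&c} gives that the pull-back relative orientation on $q$ is precisely the canonical relative orientation of $\Id_M$. Applying Proposition~\ref{prop:proppp}\eqref{prop:pushfiberprod} then yields
\[
\Id_{M\,*}\,\Id_M^{\,*}\,\alpha \;=\; f^*\,f_*\,\alpha,
\]
which reduces to $\alpha=f^*f_*\alpha$ once one knows $\Id_{M\,*}=\Id$ on $A^*(M)$. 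I would justify that auxiliary fact by invoking the uniqueness implicit in Proposition~\ref{prop:proppp}: the identity operator on $A^*(M)$ trivially satisfies \eqref{prop:pushpull} and is natural under pullback in the sense of \eqref{prop:pushfiberprod}, and since push-forward can be computed in a single coordinate chart where integration over the one-point fiber of $\Id$ with its canonical orientation is evaluation, the identity is the only candidate.

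Having established $f^*f_*=\Id$, I would then apply Proposition~\ref{prop:proppp}\eqref{prop:pushcomp} to the composition $f\circ f^{-1}=\Id_M$ equipped with the canonical relative orientations, which compose to the canonical relative orientation of $\Id_M$ by a direct check on bases using the explicit formula for $o^{(\cdot)}_c$. This gives $f_*\circ (f^{-1})_*=\Id_{M\,*}=\Id$, exhibiting $(f^{-1})_*$ as a right inverse of $f_*$ while $f^*$ is a left inverse; hence $f^*=(f^{-1})_*$, as desired. The only real work is tracking canonical relative orientations through pullback and composition, handled by Lemma~\ref{lm:pb&c} and the pointwise formula for $o^{(\cdot)}_c$, respectively, and I expect this orientation bookkeeping to be the main subtle step in the argument.
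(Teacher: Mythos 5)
Your argument is correct, but it takes a different route from the paper's. You establish $f^*f_*=\Id$ by applying the base-change formula, Proposition~\ref{prop:proppp}\eqref{prop:pushfiberprod}, to the square obtained by pulling $f$ back along itself (with Lemma~\ref{lm:pb&c} handling the orientation on the diagonal), then produce a right inverse $f_*\circ(f^{-1})_*=\Id$ from \eqref{prop:pushcomp} and conclude by the two-sided-inverse trick. The paper instead first proves the scalar identity $f_*1=1$ --- checked pointwise by base-changing along the inclusion of an arbitrary point and invoking the normalization property \eqref{normalization} --- and then gets the result in one line from the projection formula: $f^*\alpha=f^{-1}_*f_*(f^*\alpha\wedge 1)=f^{-1}_*(\alpha\wedge f_*1)=f^{-1}_*\alpha$. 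Both proofs silently rest on $(\Id_M)_*=\Id$; your justification of that point via the uniqueness statement following Proposition~\ref{prop:proppp} is somewhat informal but acceptable (alternatively, it follows from the paper's own computation of $f_*1$ applied to $f=\Id$ together with \eqref{prop:pushpull}). What your approach buys is that it avoids the pointwise computation of $f_*1$ and isolates the orientation bookkeeping in a single application of Lemma~\ref{lm:pb&c}; what the paper's approach buys is brevity and the reusable intermediate fact $f_*1=1$. Either is a valid proof.
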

\begin{proof}
We begin by showing that $f_*1 = 1.$ It suffices to show $g^* f_*1 = 1$ for $g : P \to M$ the inclusion of an arbitrary point. Indeed, the fiber product $P\prescript{}{g}\times_{f}^{}M$ is the fiber $f^{-1}(g(P)),$ which consists of a single point with positive orientation. So,  Proposition~\ref{prop:proppp}~\eqref{normalization} and~\eqref{prop:pushfiberprod} imply that $g^*f_*1 = \int_{f^{-1}(g(P))} 1 = 1.$ Therefore, using Proposition~\eqref{prop:proppp}~\eqref{prop:pushcomp} and~\eqref{prop:pushpull}, we obtain
\[
f^*\alpha = f^{-1}_* f_* (f^*\alpha \wedge 1) = f^{-1}_* (\alpha \wedge f_*1) = f^{-1}_* \alpha.
\]
\end{proof}

\begin{lm}\label{lm:flip}
		Let
		\[
		\xymatrix{
		{M\times_N P}\ar[r]^{\quad p}\ar[d]^{q}&
        {P}\ar[d]^{g}\\
        {M}\ar[r]^{f}&N
		}
		\]
		be a pull-back diagram, where $f$ and $g$ are strongly smooth and $f$ is a proper submersion. Let $o^f$ be a relative orientation of $f$ and equip $p$ with the transpose pull-back orientation $^t\!g^*o^f.$ Let $\alpha\in A^*(M).$ Then
		\[
		p_*q^*\alpha=(-1)^{\rdim f \cdot \rdim g} g^*f_*\alpha.
		\]
\end{lm}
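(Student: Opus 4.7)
The strategy is to reduce the statement to Proposition~\ref{prop:proppp}~\eqref{prop:pushfiberprod} by transposing the pull-back square. Namely, I would consider the companion pull-back diagram
\[
\xymatrix{
{P\times_N M}\ar[r]^(.6){\hat q}\ar[d]_{\hat p} & M\ar[d]^{f}\\
P\ar[r]^{g} & N
}
\]
together with the canonical swap diffeomorphism $\theta:M\times_N P\to P\times_N M,$ which satisfies $\hat p\circ\theta=p$ and $\hat q\circ\theta=q.$ Applying Proposition~\ref{prop:proppp}~\eqref{prop:pushfiberprod} to this transposed diagram, with $\hat p$ equipped with the pull-back relative orientation $g^*o^f,$ yields
\[
\hat p_*\hat q^*\alpha=g^*f_*\alpha.
\]

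Next, I would convert $p_*q^*\alpha$ into $\hat p_*\hat q^*\alpha$ at the cost of the claimed sign. First, $q^*\alpha=\theta^*\hat q^*\alpha.$ Second, Lemma~\ref{lm:flipor} gives the orientation identity
\[
g^*o^f\circ o_c^\theta=(-1)^{\rdim f\cdot\rdim g}\,{}^t\!g^*o^f,
\]
so the push-forward along $p$ with orientation ${}^t\!g^*o^f$ differs by $(-1)^{\rdim f\cdot\rdim g}$ from the push-forward along $\hat p\circ\theta$ computed with the composite orientation $g^*o^f\circ o_c^\theta.$ Applying Proposition~\ref{prop:proppp}~\eqref{prop:pushcomp} to split this latter push-forward as $\hat p_*\circ\theta_*,$ I obtain
\[
p_*q^*\alpha
=(-1)^{\rdim f\cdot\rdim g}\,\hat p_*\theta_*\theta^*\hat q^*\alpha.
\]

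Finally, I would invoke the fact that for a diffeomorphism equipped with its canonical relative orientation one has $\theta_*\theta^*=\Id$ (the same one-line argument as in Lemma~\ref{lm:pp}, using Proposition~\ref{prop:proppp}~\eqref{prop:pushpull} and~\eqref{prop:pushfiberprod} to see $\theta_*1=1,$ hence $\theta_*\theta^*\beta=\theta_*(\theta^*\beta\wedge 1)=\beta\wedge\theta_*1=\beta$). Combining this with the displayed equation and the initial application of Proposition~\ref{prop:proppp}~\eqref{prop:pushfiberprod} gives
\[
p_*q^*\alpha=(-1)^{\rdim f\cdot\rdim g}\hat p_*\hat q^*\alpha=(-1)^{\rdim f\cdot\rdim g}g^*f_*\alpha.
\]
The only subtle step is the orientation bookkeeping in the middle paragraph; once Lemma~\ref{lm:flipor} is in hand, everything else is a direct application of the properties collected in Proposition~\ref{prop:proppp}.
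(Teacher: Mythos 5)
Your proposal is correct and follows essentially the same route as the paper, whose proof is simply the citation ``This follows from Lemmas~\ref{lm:flipor} and~\ref{lm:pp}'': you use Lemma~\ref{lm:flipor} for the orientation comparison and re-derive the content of Lemma~\ref{lm:pp} (that $\theta_*\theta^*=\Id$ for the swap diffeomorphism with its canonical relative orientation), with the remaining steps being the same applications of Proposition~\ref{prop:proppp}. The orientation bookkeeping and the identities $\hat p\circ\theta=p,$ $\hat q\circ\theta=q$ all check out.
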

\begin{proof}
This follows from Lemmas~\ref{lm:flipor} and~\ref{lm:pp}.
\end{proof}

\section{Orbifolds with corners}\label{sec:owc}
In this section we summarize relevant background on orbifolds.

\subsection{\'Etale proper groupoids with corners}\label{ssec:epgwc}
A \textbf{groupoid} is a small category $\G$ in which each morphism is an isomorphism. A groupoid $\X$ is \textbf{\'etale with corners} of dimension $n$ if the object set $X_0$ and morphism set $X_1$ are manifolds with corners of dimension $n$ and the source and target maps $s,t : X_1 \to X_0$ are local diffeomorphisms. It follows that the composition, inverse and identity maps,
\[
m: X_1  \,_{s}\!\times_{t} X_1 \to X_1, \qquad i: X_1 \to X_1, \qquad e: X_0 \to X_1,
\]
are also local diffeomorphisms.
An \'etale groupoid with corners $\X$ is \textbf{proper} if the map $s \times t : X_1 \to X_0 \times X_0$ is proper.

Some important examples of \'etale proper groupoids with corners arise as follows.
\begin{ex}\label{ex:mwc}
Let $M$ be a manifold with corners. We obtain an \'etale proper groupoid $\X^M$ with objects $X^M_0= M$, morphisms $X^M_1 = M$ and source and target maps $s = t = \Id_M.$
\end{ex}
\begin{ex}\label{ex:cover}
More generally, let $M$ be a manifold with corners and let $\Uc = \{U_\alpha\}_{\alpha\in A}$ be an open cover of $M.$ For $\alpha,\beta \in A,$ let $i_{\alpha\beta} : U_\alpha \cap U_{\beta} \to U_\alpha$ denote the inclusion. One obtains an \'etale proper groupoid $\X^\Uc$ with object and morphism spaces and source and target maps given by
\[
X_0^\Uc =  \coprod_{\alpha \in A} U_\alpha, \qquad X^\Uc_1 = \coprod_{\alpha,\beta \in A} U_\alpha \cap U_\beta, \qquad s = \coprod_{\alpha,\beta \in A} i_{\alpha\beta}, \qquad t = \coprod_{\alpha,\beta \in A} i_{\beta\alpha}.
\]
\end{ex}
\begin{ex}\label{ex:action}
Generalizing Example~\ref{ex:mwc} in a different direction, let $M$ be a manifold with corners and let $G$ be a discrete group. Consider a proper action
\[
\varphi : G \times M \to M.
\]
One obtains an \'etale proper groupoid with corners $\X^\varphi$ with objects $X^\varphi_0 = M,$ morphisms $X^\varphi_1 = G \times M,$ source map given by $s(g,m) = m$ and target map given by $t(g,m) = \varphi(g,m).$ In the special case that $M$ is a point, $G$ is a finite group, and $\varphi$ is the trivial action, we write $\B{G}$ for $\X^\varphi.$ This notation is motivated by Example~\ref{ex:BGorb} below.
\end{ex}

A functor of \'etale proper groupoids with corners $F : \X \to \Y$ is called \textbf{smooth} (resp. \textbf{strongly smooth}) if the underlying maps on objects $F_0 : X_0 \to Y_0$ and on morphisms $F_1 : X_1 \to Y_1$ are smooth (resp. strongly smooth).

\begin{ex}\label{ex:Ff}
Consider the \'etale proper groupoid with corners $\X^\Uc$ associated to a cover $\Uc = \{U_\alpha\}_{\alpha \in A}$ of a manifold with corners $M$ as in Example~\ref{ex:cover}. Let $\Uc' = \{U'_\alpha\}_{\alpha \in A'}$ be a refinement of $\Uc$ and let $f : A' \to A$ be a map such that $U'_{\alpha} \subset U_{f(\alpha)}.$ For $\alpha,\beta \in A',$ let $j^f_\alpha : U'_\alpha \to  U_{f(\alpha)}$ and $j^f_{\alpha \beta}: U_\alpha \cap U_\beta \to U_{f(\alpha)} \cap U_{f(\beta)}$ denote the inclusions. Then, we have a smooth functor $F^f : \X^{\Uc'} \to \X^{\Uc}$ given by
\[
F^f_0 = \coprod_{\alpha \in A'} j^f_\alpha , \qquad F^f_1 = \coprod_{\alpha,\beta \in A'} j^f_{\alpha\beta}.
\]
\end{ex}

Let $F,G: \X \to \Y$ be two smooth functors. A natural transformation $\alpha: F \Rightarrow G$ is called \textbf{smooth} if the underlying map $\alpha : X_0 \to Y_1$ is smooth. We denote by \EPG{} the 2-category of \'etale proper groupoids with corners, with morphisms given by smooth functors and 2-morphisms given by smooth natural transformations. Observe that all $2$-morphisms are in fact isomorphisms.

To an \'etale proper groupoid with corners $\X,$ we associate the topological space $|\X|,$ which is the set of isomorphism classes of objects equipped with the quotient topology induced from~$X_0.$ It follows from the properness of $\X$ that $|\X|$ is Hausdorff; see Lemma~\ref{lm:quothaus} below. To a smooth functor of \'etale proper groupoids with corners $F : \X \to \Y,$ we associate the induced continuous map of isomorphism classes $|F| : |\X| \to |\Y|.$ In Examples \ref{ex:mwc}-\ref{ex:Ff} above,
\[
|\X^M| = |\X^\Uc| = M, \qquad |\X^\varphi| = M/G, \qquad |F^f| = \Id_M.
\]

It is tempting to think of an \'etale proper groupoid $\X$ mainly as a means to study the possibly singular space $|\X|.$ However, as the following examples show, important aspects of the structure of $\X$ cannot be seen through $|\X|.$ Thus, it seems preferable to think of $\X$ as a category with additional topological and smooth structures.
\begin{ex}\label{ex:taction}
As a special case of Example~\ref{ex:action}, let $G$ be a finite group and let $\varphi : G \times M \to M$ be the trivial action. Consider also the \'etale proper groupoid $\X^M$ of Example~\ref{ex:mwc}. Then $|\X^\varphi| = M = |\X^M|.$ However, as will be seen below in Example~\ref{ex:itact}, the \'etale proper groupoids $\X^\varphi$ and $\X^M$ behave quite differently from the perspective of integration when $G$ is non-trivial. Integration over such groupoids plays an important role in applications of Atiyah-Bott fixed point localization~\cite{AtiyahBottLoc} to Gromov-Witten theory~\cite{EllingsrudStrommecubics,EllingsrudStrommeAB,KontsevichLoc,Zernik1}.
\end{ex}
\begin{ex}\label{ex:fb}
Let $\Y$ be the \'etale proper groupoid with corners of Example~\ref{ex:mwc} in the case $M = [0,\infty).$ Let $\mZ$ be the \'etale proper groupoid of Example~\ref{ex:action} in the case where $M = \R, \, G = \Z/2,$ and the action $\varphi : G \times M \to M$ is given by $\varphi(1,x) = -x.$
Then,
\[
|\Y| \simeq [0,\infty) \simeq |\mZ|.
\]
However, there are no two smooth functors $F : \mZ \to \Y$ and $G : \Y \to \mZ$ such that $|F| \circ |G| = \Id_{|\Y|}.$ Indeed, if this were the case, then since the quotient map $Y_0 \to |\Y|$ is a bijection, it would follow that $F_0 \circ G_0 = \Id_{Y_0}.$ On the other hand, we must have $F_0(x) = F_0(-x),$ so the derivative of $F_0$ at the point $0 \in \R = \mZ_0$ vanishes, which contradicts the chain rule.

Similarly, consider the smooth functor $H: \Y \to \R$ given by $H_0(x) = x.$ Then there do not exist smooth functors $G : \Y \to \mZ$ and $H' : \mZ \to \R$ such that $H = H' \circ G.$
In applications such as those in symplectic geometry mentioned in Section~\ref{ssec:mab}, smooth functors like $H$ arise naturally. Thus, $\mZ$ and its higher dimensional generalizations are not sufficiently flexible to serve as local models for boundary and corners even though the associated topological spaces are homeomorphic. See also Example~\ref{ex:fbo} below.
\end{ex}

The notion of a morphism of \'etale proper groupoids is particularly sensitive to the underlying categorical structure. See the introduction of~\cite{Lerman} for a discussion of this point. The following example illustrates some of the subtleties.
\begin{ex}\label{ex:BG}
Let $M$ be a manifold with corners, let $\Uc = \{U_\alpha\}_{\alpha\in A}$ be an open cover of $M,$ and let $\X^\Uc$ be the associated \'etale proper groupoid as in Example~\ref{ex:cover}. Let $\B{G}$ be as in Example~\ref{ex:action}. Then there is a canonical bijection between isomorphism classes of smooth functors $\X^\Uc \to \B{G}$ and isomorphism classes of $G$ bundles over $M$ that are trivial over $U_\alpha$ for $\alpha \in A.$ Thus, although $|\B{G}|$ is a single point, there can be many different maps from a given \'etale proper groupoid to $\B{G}.$ We develop this example further in Example~\ref{ex:BGorb} below.
\end{ex}

We say that a smooth functor of \'etale proper groupoids with corners $F : \X \to \Y$ is \textbf{proper} if the induced map $|F| : |\X| \to |\Y|$ is proper. In particular, we say that $\X$ is \textbf{compact} if $|\X|$ is compact.
We say that a morphism $F : \X \to \Y$ in \EPG{} is a \textbf{submersion} if $F_0$ and hence $F_1$ are submersions.
We say that a morphism $F : \X \to \Y$ in \EPG{} is a \textbf{local diffeomorphism} if $F_0$ and hence also $F_1$ are local diffeomorphisms.
We say that morphisms $F:\X\to \mZ,$ $G:\Y\to\mZ,$ are \textbf{transverse} if $F_0$ and $G_0$ are transverse. In particular, any strongly smooth morphism is transverse to a strongly smooth submersion.

An \textbf{orientation} of an \'etale proper groupoid with corners $\X$ is an orientation on the space of objects $X_0$ and an orientation on the space of morphisms $X_1$ such that the local diffeomorphisms $s,t : X_1 \to X_0$ are orientation preserving.

\begin{ex}\label{ex:fbo}
Let $\Y$ and $\mZ$ be the \'etale proper groupoids with corners from Example~\ref{ex:fb}. Then $\Y$ is orientable but $\mZ$ is not.
\end{ex}

We will also use the notion of a relative orientation of a smooth functor $F : \X \to \Y.$ Recall the notation of Section~\ref{ssec:orientations} for relative orientations of smooth maps of manifolds with corners. A \textbf{relative orientation} of a smooth functor $F : \X \to \Y$ is a pair of relative orientations $o^{F_0}$ and $o^{F_1}$ of $F_0$ and $F_1$ respectively, such that
\begin{equation}\label{eq:relor}
o^s_c \circ o^{F_1} = o^{F_0} \circ o^s_c, \qquad o^t_c \circ o^{F_1} = o^{F_0} \circ o^t_c.
\end{equation}
In the case $F$ is a local diffeomorphism, the canonical relative orientations of $F_0$ and $F_1$ give what we call the \textbf{canonical relative orientation} of $F.$
Let $F,G : \X \to \Y$ be relatively oriented smooth functors and let $\alpha: F \Rightarrow G$ be a smooth natural transformation. Observe that the underlying map $\alpha : X_0 \to Y_1$ satisfies $s \circ \alpha = F$ and $t \circ \alpha = G.$ Thus, a \textbf{relative orientation} of $\alpha$ is a relative orientation $o^\alpha$ of the underlying map $\alpha: X_0 \to Y_1$ such that
\begin{equation}\label{eq:arelor}
o^s_c \circ o^\alpha = o^{F_0}, \qquad o^t_c \circ o^\alpha = o^{G_0}.
\end{equation}
\begin{ex}
Suppose $F$ is a smooth functor and let $o^F_1$ and $o^F_2$ be two relative orientations of $F.$ Let $\alpha : F \Rightarrow F$ be the identity natural transformation. Then $\alpha$ is relatively oriented if and only if $o^F_1 = o^F_2.$
\end{ex}

Recall the discussion of the boundary of manifolds with corners from Section~\ref{ssec:moc}. The \textbf{boundary} of an \'etale proper groupoid with corners $\X$ is the \'etale proper groupoid with corners $\partial \X$ together with a morphism $i_\X : \partial \X \to \X$
defined as follows. The object and morphisms spaces of $\partial \X$ are given by
\[
(\partial X)_j = \partial X_j, \qquad j = 0,1,
\]
and the morphism $i_\X$ is given by $(i_\X)_j = i_{X_j}.$ Observe that a local diffeomorphism of manifolds with corners $f : M \to N$ induces a local diffeomorphism of the boundaries $\partial f : \partial M \to \partial N.$ Thus, the structure maps $s,t,m,i,$ and $e,$ of $\X,$ being local diffeomorphisms, induce structure maps on $\partial\X.$ The \textbf{relative boundary orientation} of $i_\X$ is given by $o^{i_\X} = (o^{i_{X_0}},o^{i_{X_1}}).$

We discuss weak fiber products in $\EPG{}$ in Section~\ref{ssec:fpo}. Some references on \'etale proper groupoids are~\cite{Lerman,Moerdijk}. \'Etale proper groupoids with corners are examples of polyfolds~\cite{HoferWysockiZehnderbook}.

\subsection{The weak 2-category of orbifolds with corners}
A smooth functor of \'etale proper groupoids with corners $F$ is called a \textbf{refinement} if $F$ is an equivalence of categories and a local diffeomorphism.

\begin{ex}\label{ex:refine}
The functor $F^f$ of Example~\ref{ex:Ff} is a refinement.
\end{ex}

Observe that a refinement need not have a smooth weak inverse, so a refinement is generally not an equivalence in the $2$-category \EPG{}. Following Zernik~\cite{Zernik2}, we define the weak 2-category of orbifolds with corners \Orb{} to be the weak 2-category of fractions of \EPG{} with respect to refinements as defined by Pronk~\cite{Pronk}. In particular, in the category \Orb{} a refinement is an equivalence. Zernik~\cite{Zernik2} verifies that refinements satisfy the necessary conditions for the category of fractions to be defined.

More explicitly, we can describe \Orb{} as follows. The objects of \Orb{} are the objects of \EPG{}. A morphism $f : \X \to \Y$ in \Orb{} is a pair of morphisms $R : \X' \to \X$ and $F : \X' \to \Y$ in \EPG{} with $R$ a refinement. We abbreviate $f : \X \overset{R}{\leftarrow} \X' \overset{F}{\rightarrow} \Y$ or simply $f = F|R.$ Let $f : \X \overset{R}{\leftarrow} \X' \overset{F}{\rightarrow} \Y$ and $g : \X \overset{S}{\leftarrow} \X'' \overset{G}{\rightarrow} \Y$ be morphisms $\X \to \Y$ in \Orb{}. A 2-morphism $\alpha : f \Rightarrow g$ in \Orb{} is given by a pair of refinements $T_1 : \X''' \to \X'$ and $T_2 : \X''' \to \X''$ and a pair of 2-morphisms $\alpha_1 : R \circ T_1 \Rightarrow S \circ T_2$ and $\alpha_2 : F \circ T_1 \Rightarrow G \circ T_2$ in \EPG{} as illustrated by the following diagram:
\begin{equation*}
\xymatrix{
& \X' \ar[dr]_R \ar[drrr]^F\\
\X''' \ar[ur]_{T_1} \ar@{}[rr]|(.56){\Downarrow\, \alpha_1} \ar[dr]^{T_2} & & \X \ar@{}[rr]|(.5){\Downarrow\, \alpha_2} && \Y \\
& \X'' \ar[ur]^S \ar[urrr]_G
}
\end{equation*}
A detailed account of compositions of morphisms of different types can be found in~\cite{Pronk}. We discuss the composition of $1$-morphisms in greater detail in Section~\ref{sssec:comp}.

\begin{ex}\label{ex:BGorb}
Let $M$ be a manifold with corners and let $\X^M$ be the associated orbifold as in Example~\ref{ex:mwc}. Let $\B{G}$ be as in Example~\ref{ex:action}. Then isomorphism classes of morphisms $\X^M \to \B{G}$ in \Orb{} are in bijection with $G$ bundles over $M.$ To see this, we combine Examples~\ref{ex:BG} and~\ref{ex:refine}.
\end{ex}

One can identify isomorphic morphisms in $\EPG$ to obtain a (1-)category, which coincides with the category of fractions with respect to refinements as defined by Gabriel-Zisman~\cite{GabrielZisman}. This is the approach taken by~\cite{Moerdijk} to define the category of orbifolds, and is also used in the context of polyfolds~\cite{HoferWysockiZehnderbook}. One disadvantage of this approach is that morphisms in the category of fractions are not local. That is, one cannot construct a morphism by gluing together morphisms on each member of a covering that agree on the intersections. See~\cite{Lerman}. In the weak 2-category approach, one can construct a morphism by gluing together morphisms on each member of a covering if one specifies 2-morphisms on intersections that satisfy a cocycle condition on triple intersections.

Observe that every refinement is a strongly smooth proper submersion. Moreover, a refinement carries a canonical relative orientation since it is a local diffeomorphism. Thus, we say that a morphism $f : \X \to \Y$ in \Orb{} given by a diagram $\X \overset{R}{\leftarrow} \X' \overset{F}{\rightarrow} \Y$
is \textbf{relatively oriented} (resp. \textbf{strongly smooth}, \textbf{proper}, a \textbf{submersion}) if the morphism $F$ in \EPG{} is relatively oriented (resp. strongly smooth, proper, a submersion).
A pair of morphisms $f : \X \to \cZ$ and $g: \Y \to \cZ$ in \Orb{} given by diagrams $\X \overset{R}{\leftarrow} \X' \overset{F}{\rightarrow} \cZ$ and $\Y \overset{S}{\leftarrow} \Y' \overset{G}{\rightarrow} \cZ$ are \textbf{transverse} if $F,G,$ are transverse as maps in $\EPG{}.$ We discuss weak fiber products in $\Orb$ in Section~\ref{ssec:fpo}.

\subsection{Historical context}\label{ssec:hist}

Initially, orbifolds were introduced by Satake~\cite{Satake1} and further developed and used by Thurston~\cite{Thurston} and Haefliger~\cite{Haefliger},
as a form of a singular space. In this approach, orbifolds are defined to be spaces that are locally a quotient of a smooth manifold by a linear action of a finite group.
Differential forms on such orbifolds were defined and studied in~\cite{Satake1,Satake2, Haefliger}. A de Rham theorem was proved.

Differential forms have been defined and studied extensively for other types of singular spaces. For example,
differential forms on possibly singular complex analytic varieties were defined by Grauert and Grothendieck. Although the de Rham theorem does not hold in this context, a substitute was proved by Bloom-Herrera~\cite{BloomHerrera}. Brasselet-Pflaum define Whitney-de Rham cohomology and show it does satisfy the de Rham theorem~\cite{BrasseletPflaum}. Du Bois~\cite{duBois} defines a filtered de Rham complex for separated schemes of finite type over $\C$ and uses it to recover the mixed Hodge structure of Deligne~\cite{Deligne}.
On stratified spaces, forms are defined by Brasselet-Hector-Saralegi~\cite{BrasseletHectorSaralegi}
and Brasselet-Legrand~\cite{BrasseletLegrand}. They prove a de Rham theorem for intersection cohomology.
%
%
Marshall~\cite{Marshall,Marshall2} defines differential forms on subcartesian spaces. Alternative definitions of Zariski and Koszul forms on subcartesian spaces appear in~\cite{WattsMsc, Sniatycki} with a comparison of the three definitions; they are not  equivalent. 
The three types are defined more generally on differential spaces in \'{S}niatycki~\cite{Sniatycki}. Another definition by Kowalczyk~\cite{Kowalczyk} is similar to that of Zariski forms.
Iglesias-Zemmour~\cite{Iglesias} defines diffeological forms, and
Watts~\cite{WattsPhd} verifies that for diffeological orbifolds the definition coincides with that of Satake orbifolds.
%
%
In general, it would be interesting to investigate the relations among the different definitions of differential forms in different settings.

Our approach, however, is from a different perspective. Rather than considering orbifolds as singular spaces, we adopt the approach of~\cite{MoerdijkPronk, Moerdijk, Pronk}
and think of orbifolds in the categorical language of groupoids. A discussion comparing the two definitions is found in~\cite{Moerdijk, AdemLeidaRuan}, and~\cite{HenriquesMetzler, Lerman} stress in addition the importance of the 2-category structure. Notably, the definitions of maps between orbifolds and pullbacks become straightforward in the categorical language, while they are rather subtle in the original approach of Satake~\cite{ChenI, ChenII, ChenRuan}. Moreover, moduli spaces arising in geometry come with natural groupoid structures. We illustrate some advantages of the groupoid approach in Section~\ref{ssec:epgwc}, particularly in Examples~\ref{ex:taction},~\ref{ex:fb},~\ref{ex:BG},~\ref{ex:fbo}, and~\ref{ex:BGorb}. From the point of view of groupoids, an orbifold is a smooth space. That is, the spaces of objects and morphisms are equipped with smooth structures.

A definition of differential forms on differential stacks is given in~\cite{Behrend}, and more specifically for orbifolds from the groupoid perspective in~\cite{AdemLeidaRuan}. This definition,  which is the one used in the present work, is shown to agree with the definition of Satake. Discussions of pull-back, integration, Poincar\'e duality and the de Rham theorem are also given. However, we are not aware of a reference for push-forward of differential forms by a map between orbifolds and the various properties of push-forward developed here. The use of categorical language is of key importance for us to even formulate these properties accurately. Similarly, the categorical approach gives a natural framework for the discussion on currents and their properties, see Section~\ref{sec:curr}.

In algebraic geometry, the analog of an orbifold is a smooth Deligne-Mumford stack. General Deligne-Mumford stacks correspond to \'etale proper groupoids where the spaces of objects and morphisms may be singular. A natural setting for studying singular spaces modeled locally by the zeros of $C^\infty$ functions is algebraic geometry over $C^\infty$ rings, which is developed systematically in~\cite{Joyce4}. A definition of $C^\infty$ stacks is given. These combine categorical structure and singularities. Differential forms on $C^\infty$ rings are defined by Lerman~\cite{Lerman2}. It is not clear in what generality such forms can be integrated over the fiber of a map of $C^\infty$ ringed spaces. So, it is not clear whether the results of the present paper could be extended to that context.

\section{Differential forms on orbifolds with corners}\label{sec:forms}
\subsection{Main definitions}
The present section gives the definitions of differential forms on orbifolds with corners and the push-forward and pull-back operations. The definitions rely on a series of lemmas, which we formulate here and prove in Sections~\ref{ssec:clean} and~\ref{ssec:compare}. Our approach is based on ideas from~\cite{Behrend} and~\cite{Zernik2}.

For a manifold with corners $M,$ denote by $A^*(M)$ the Fr\'echet space of smooth differential forms on $M$ with the $C^\infty$ topology. Note that the exterior derivative $d$ and the pull-back $f^*$ along a smooth map $f$ are linear and continuous. For a proper submersion $f$, the push-forward $f_*$ is linear and continuous as well. Let $\X$ be an object of $\EPG{}.$
The \textbf{differential forms} on $\X$ are given by
\[
A^*(\X) := \ker(s^* - t^* : A^*(X_0) \to A^*(X_1)).
\]
In other words, the differential forms on $\X$ are the invariants of the action of isomorphisms on $A^*(X_0).$
It is a closed subset of $A^*(X_0)$ and thus is also a Fr\'echet space.

\begin{ex}\label{ex:mwcf}
Consider $\X^M$ as in Example~\ref{ex:mwc}. Then, $A^*(\X^M)$ recovers the usual differential forms  $A^*(M).$
\end{ex}
\begin{ex}\label{ex:actionf}
Consider $\X^\varphi$ as in Example~\ref{ex:action}. Then,
$A^*(\X)$ recovers the $G$-invariant differential forms $A^*(M)^G.$
\end{ex}

A subset $C \subset X_0$ is called \textbf{clean} if $t|_{s^{-1}(C)}$ is proper, or equivalently, if $s|_{t^{-1}(C)}$ is proper. A subset $C \subset X_1$ is called clean if it is closed and $s(C),t(C),$ are clean.
\begin{lm}\label{lm:sgc}
Let $C \subset X_1$ be clean. Then $s|_C$ and $t|_C$ are proper.
\end{lm}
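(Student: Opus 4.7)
The plan is to prove $s|_C$ is proper; the case of $t|_C$ then follows by a symmetric argument using cleanness of $s(C)$ in place of $t(C)$, or equivalently by applying the $s|_C$ case to $i(C)\subset X_1$ together with the identity $s\circ i=t$, where $i:X_1\to X_1$ denotes inversion. To show $s|_C$ is proper I fix a compact $K\subset X_0$ and aim to prove that $s^{-1}(K)\cap C$ is compact in $X_1$. The key observation is the tautological inclusion $C\subset t^{-1}(t(C))$: by the hypothesis that $C\subset X_1$ is clean, $t(C)\subset X_0$ is clean, and the equivalent formulation provided in the definition gives that $s|_{t^{-1}(t(C))}$ is proper. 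Thus $s^{-1}(K)\cap t^{-1}(t(C))$ is compact, and
\[
s^{-1}(K)\cap C\subset s^{-1}(K)\cap t^{-1}(t(C))
\]
sits inside a compact subset of $X_1$.

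To conclude that $s^{-1}(K)\cap C$ itself is compact, it remains to check that it is closed in the compact set on the right. I expect this to be the main obstacle: it amounts to $C$ being closed in $X_1$, which is not wholly immediate from the bare definition of cleanness for subsets of $X_1$ but does hold in the settings of interest, for instance when $C$ is the support of a differential form. One can either treat this closedness as an implicit part of the hypothesis or establish it directly by a limit argument using the unit section $e:X_0\to X_1$ together with the properness of $s\times t$. Granted closedness, $s^{-1}(K)\cap C$ is a closed subset of a compact Hausdorff set and hence compact, which proves $s|_C$ proper; the parallel argument using cleanness of $s(C)$ and the inclusion $C\subset s^{-1}(s(C))$ yields properness of $t|_C$.
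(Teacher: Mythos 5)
Your argument is essentially the paper's: the paper proves $t|_C$ proper by observing that cleanness of $s(C)$ makes $t|_{s^{-1}(s(C))}$ proper and that $C=s^{-1}(s(C))\cap C$ is closed, so the restriction of a proper map to the closed subset $C$ is proper; you run the mirror-image argument for $s|_C$ via $C\subset t^{-1}(t(C))$. The closedness of $C$ that you flag as the main obstacle is indeed a genuine prerequisite and cannot be absorbed into the hypothesis, but it is not a gap in the paper's development: it is exactly Lemma~\ref{lm:cc1}, established immediately before this lemma. Note, though, that the paper's proof of that closedness for subsets of $X_1$ does not go through the unit section $e$ (that device is used for the $X_0$ case, Lemma~\ref{lm:cc}); instead it uses the characterization of clean subsets of $X_0$ as those on which $\pi$ is proper (Lemma~\ref{lm:cpp}), the properness of $s\times t$, and the fact that a subset on which a map to a Hausdorff sequential space restricts properly is closed (Lemma~\ref{lm:prc}). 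With that reference supplied, your proof is complete and coincides with the paper's.
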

\begin{lm}\label{lm:clsc}
A closed subset of a clean subset $C \subset X_i$ is clean for $i = 0,1.$
\end{lm}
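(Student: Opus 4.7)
The plan is to treat the two cases $i=0$ and $i=1$ separately, in that order, since the case $i=1$ will reduce to the case $i=0$ after applying Lemma~\ref{lm:sgc}. The whole argument rests on the elementary topological fact that the restriction of a proper map to a closed subset of its domain is again proper, together with the fact that a proper map into a Hausdorff space has closed image.

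For $i=0$, I would take a clean $C \subset X_0$ and a closed $C' \subset C$. Since $s$ is continuous, $s^{-1}(C')$ is closed in $s^{-1}(C)$. The map $t|_{s^{-1}(C)} : s^{-1}(C) \to X_0$ is proper by cleanliness of $C$, so its restriction $t|_{s^{-1}(C')}$ to the closed subset $s^{-1}(C')$ is proper as well, which is the defining condition for $C'$ to be clean.

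For $i=1$, I would take a clean $C \subset X_1$ and a closed $C' \subset C$. Lemma~\ref{lm:sgc} supplies the properness of $s|_C$ and $t|_C$; the restriction principle then gives properness of $s|_{C'}$ and $t|_{C'}$. Because $X_0$ is Hausdorff, the images $s(C')$ and $t(C')$ are closed in $X_0$, and in particular closed in the clean subsets $s(C)$ and $t(C)$ respectively. The already-established $i=0$ case then yields that $s(C')$ and $t(C')$ are clean subsets of $X_0$, which is exactly the definition of $C'$ being a clean subset of $X_1$.

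I do not expect a real obstacle. The only point requiring a line of care is the interpretation of the word ``closed'': whether $C'$ is meant closed in the ambient space $X_i$ or closed in $C$ in the subspace topology. Both interpretations lead to the same conclusion, since a subset closed in $C$ has the form $C \cap D$ with $D \subset X_i$ closed, and the intersection of a compact set with such a $D$ in a Hausdorff space is compact, so properness is preserved in either reading.
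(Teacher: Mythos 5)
Your proof is correct and follows essentially the same route as the paper: the $i=0$ case is the restriction-of-a-proper-map-to-a-closed-set principle applied to $t|_{s^{-1}(C)}$, and the $i=1$ case uses Lemma~\ref{lm:sgc} to see that $s(C')$ and $t(C')$ are closed subsets of the clean sets $s(C)$ and $t(C)$ and then invokes the $i=0$ case. The only cosmetic difference is that the paper leaves the $i=0$ case as a one-line remark, and your closing observation about the two readings of ``closed'' is moot in any event because clean subsets are themselves closed (Lemmas~\ref{lm:cc} and~\ref{lm:cc1}).
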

\begin{lm}\label{lm:cleanunion}
A finite union of clean subsets of $X_i$ is clean for $i = 0,1.$
\end{lm}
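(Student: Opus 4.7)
The plan is to handle the $i=0$ case first, where the definition of cleanness is concrete, and then reduce the $i=1$ case to it by pushing through $s$ and $t$.

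For the $i=0$ case, suppose $C_1,\ldots,C_n \subset X_0$ are clean, so each $t|_{s^{-1}(C_j)}$ is proper. I would use that $s^{-1}$ commutes with unions, so
\[
s^{-1}\Bigl(\bigcup_{j=1}^n C_j\Bigr) \;=\; \bigcup_{j=1}^n s^{-1}(C_j),
\]
and therefore for any compact $K \subset X_0$,
\[
\bigl(t|_{s^{-1}(\cup_j C_j)}\bigr)^{-1}(K) \;=\; \bigcup_{j=1}^n \bigl(t|_{s^{-1}(C_j)}\bigr)^{-1}(K).
\]
Each term on the right is compact by the cleanness of $C_j$, and a finite union of compact subsets of the Hausdorff space $X_1$ is compact. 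Since $X_0$ is locally compact Hausdorff, properness is equivalent to preimages of compact sets being compact, which gives the result. (One should also verify the symmetric statement for $s|_{t^{-1}(\cup_j C_j)}$, which follows verbatim by swapping the roles of $s$ and $t$.)

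For the $i=1$ case, suppose $C_1,\ldots,C_n \subset X_1$ are clean, so each $s(C_j)$ and $t(C_j)$ is clean in $X_0$. Since images commute with unions,
\[
s\Bigl(\bigcup_{j=1}^n C_j\Bigr) = \bigcup_{j=1}^n s(C_j), \qquad t\Bigl(\bigcup_{j=1}^n C_j\Bigr) = \bigcup_{j=1}^n t(C_j),
\]
and these are finite unions of clean subsets of $X_0$, hence clean by the $i=0$ case already established. By the definition of cleanness for subsets of $X_1$, this shows $\bigcup_j C_j$ is clean.

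There is no real obstacle here beyond bookkeeping; the only point requiring a moment's care is the invocation of the locally-compact-Hausdorff equivalence between properness and compact-preimage, which is justified since $X_0$ and $X_1$ are manifolds with corners.
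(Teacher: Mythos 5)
Your proof is correct and follows essentially the same route as the paper: for $i=0$ one uses that preimages commute with unions so that properness of each restriction gives properness of the restriction to the union, and the $i=1$ case reduces to $i=0$ via $s(\cup_j C_j)=\cup_j s(C_j)$ and the definition. The only superfluous step is your appeal to local compactness to identify properness with the compact-preimage condition; in this paper properness simply \emph{is} the compact-preimage condition, so nothing extra is needed there.
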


\begin{rem}
If $\X$ is compact, it follows from Lemma~\ref{lm:cpp}
below that a set $C \subset X_i,\; i = 0,1,$ is clean if and only if it is compact. However, when $\X$ is not compact, clean sets are significantly more flexible. For example, Lemma~\ref{lm:pu} below on the existence of partitions of unity holds only for functions with clean support. This leads us to consider differential forms with clean support.
\end{rem}

Let $\Acl^*(X_0), \Acl^*(X_1),$ denote the locally convex subspaces of $A^*(X_0), A^*(X_1),$ consisting of differential forms with clean support on $X_0,X_1,$ respectively. Below, the push-forward of differential forms by any of the structure maps of an \'etale proper groupoid with corners, which are by definition local diffeomorphisms, is always taken with respect to the canonical relative orientation of Section~\ref{ssec:orientations}.
We define the \textbf{differential forms with clean support} on $\X$ by
\[
\Acl^*(\X) := \coker(s_* - t_* : \Acl^*(X_1) \to \Acl^*(X_0)).
\]
Being a quotient of $\Acl^*(X_0)$ by a linear subspace, $\Acl^*(\X)$ is a topological vector space. A priori it is not known to be Hausdorff, but in Corollary~\ref{cor:frech} below we conclude that it is in fact Fr\'echet.
The push-forward operations $s_*,t_*,$ are well defined by Lemma~\ref{lm:sgc}, and they take differential forms with clean support to differential forms with clean support by Lemma~\ref{lm:clsc}. Moreover the difference of two differential forms with clean support is again cleanly supported by Lemma~\ref{lm:cleanunion}. The differential forms with clean support on $\X$ are the coinvariants of the action of isomorphisms on $\Acl^*(X_0).$

For the following two lemmas, let $F : \X \to \Y$ be a morphism in \EPG{}.
\begin{lm}\label{lm:proper}
Let $C_i \subset X_i$ be clean subsets. If $F$ is proper, then the restrictions
\[
F_i|_{C_i} : C_i \to Y_i
\]
are proper maps.
\end{lm}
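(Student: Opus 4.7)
The plan is to show, for each $i \in \{0,1\}$ and each compact $K \subset Y_i$, that $(F_i|_{C_i})^{-1}(K) = C_i \cap F_i^{-1}(K)$ is compact, by combining properness of the coarse map $|F|$ with the compactness control supplied by cleanness of $C_i$. I would prove the case $i=0$ in full and then reduce $i=1$ to it using Lemma~\ref{lm:sgc}.

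For $i=0$, let $K \subset Y_0$ be compact and denote by $\pi_\X : X_0 \to |\X|$ and $\pi_\Y : Y_0 \to |\Y|$ the quotient maps. The image $\pi_\Y(K)$ is compact, so properness of $|F|$ gives that $|F|^{-1}(\pi_\Y(K)) \subset |\X|$ is compact. The first step is to lift this coarse compact set to a compact subset $\widetilde L \subset X_0$ with $\pi_\X(\widetilde L) = |F|^{-1}(\pi_\Y(K))$. This is possible because $\pi_\X$ is open (the source and target of $\X$ being local diffeomorphisms) and $X_0$ is locally compact: cover $|F|^{-1}(\pi_\Y(K))$ by finitely many images $\pi_\X(V_j)$ of precompact opens $V_j \subset X_0$ and take $\widetilde L := \pi_\X^{-1}(|F|^{-1}(\pi_\Y(K))) \cap \bigcup_j \overline{V_j}$. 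The key inclusion is $F_0^{-1}(K) \subset \pi_\X^{-1}(|F|^{-1}(\pi_\Y(K))) = t(s^{-1}(\widetilde L))$, the saturation of $\widetilde L$. At this point I would invoke cleanness of $C_0$: properness of $t|_{s^{-1}(C_0)}$ gives that $s^{-1}(C_0) \cap t^{-1}(\widetilde L)$ is compact; applying the inverse diffeomorphism $i : X_1 \to X_1$ swaps source and target, yielding compactness of $s^{-1}(\widetilde L) \cap t^{-1}(C_0)$, and applying $t$ shows that $C_0 \cap t(s^{-1}(\widetilde L))$ is compact. Finally, since $C_0$ is closed (a direct consequence of cleanness, verified via a convergence argument on $e(x_n) \in s^{-1}(C_0) \cap t^{-1}(K')$ for a compact neighborhood $K'$ of a limit point) and $F_0^{-1}(K)$ is closed in $X_0$, the set $C_0 \cap F_0^{-1}(K)$ is a closed subset of the compact set $C_0 \cap t(s^{-1}(\widetilde L))$ and is therefore compact.

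For $i=1$, cleanness of $C_1$ gives that $s(C_1) \subset X_0$ is clean, and Lemma~\ref{lm:sgc} gives that $s|_{C_1}$ is proper. For a compact $L \subset Y_1$, the image $s_Y(L) \subset Y_0$ is compact, so the $i=0$ case applied to the clean set $s(C_1)$ shows that $s(C_1) \cap F_0^{-1}(s_Y(L))$ is compact. Properness of $s|_{C_1}$ then gives that $C_1 \cap s^{-1}(F_0^{-1}(s_Y(L)))$ is compact, and since $F_0 \circ s = s_Y \circ F_1$ implies $F_1^{-1}(L) \subset s^{-1}(F_0^{-1}(s_Y(L)))$, the set $C_1 \cap F_1^{-1}(L)$ sits as a closed subset of this compact set and hence is compact.

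The hard step is the translation in the $i=0$ case: properness of $|F|$ lives on the coarse moduli spaces, and naively pulling $|F|^{-1}(\pi_\Y(K))$ back to $X_0$ replaces $F_0^{-1}(K)$ by its saturation, which need not be compact. The essential trick is to combine the compact lift $\widetilde L$ of $|F|^{-1}(\pi_\Y(K))$ with an application of the groupoid inverse to convert the cleanness hypothesis on $C_0$ into the compactness of $C_0$ intersected with the saturation of $\widetilde L$.
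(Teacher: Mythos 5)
Your proof is correct. The core strategy for $i=0$ --- bounding $C_0\cap F_0^{-1}(K)$ by the $\pi|_{C_0}$-preimage of the compact set $|F|^{-1}(\pi(K))$, using the commutativity of $\pi\circ F_0=|F|\circ\pi$ --- is exactly the paper's, but the paper obtains compactness of that preimage in one line by citing Lemma~\ref{lm:cpp} (a subset of $X_0$ is clean precisely when $\pi$ restricts to a proper map on it), whereas you re-derive it by hand: your compact lift $\widetilde L$ together with the groupoid-inverse trick is essentially an inline proof of the relevant direction of Lemma~\ref{lm:cpp}. This works, but it costs you local compactness of $X_0$, which the paper deliberately avoids using in an essential way (see the remark following Lemma~\ref{lm:cpp}, motivated by infinite-dimensional generalizations); citing that lemma would shorten your argument and remove the dependence. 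For $i=1$ you genuinely diverge: you reduce to the $i=0$ case through the single structure map $s_Y$ together with properness of $s|_{C_1}$ from Lemma~\ref{lm:sgc}, while the paper factors $F_1|_{C_1}$ through $s\times t$ and the product map $F_0|_{s(C_1)}\times F_0|_{t(C_1)}$, using properness of $s\times t\colon X_1\to X_0\times X_0$ and closedness of $C_1$. Your version is arguably more economical since it needs only one of the two projections; both are valid. One small point to make explicit: when you invoke properness of $s|_{C_1}$, the compact set you should feed it is $s(C_1)\cap F_0^{-1}(s_Y(L))$ rather than $F_0^{-1}(s_Y(L))$ itself (which need not be compact); the identity $C_1\cap s^{-1}(F_0^{-1}(s_Y(L)))=(s|_{C_1})^{-1}\bigl(s(C_1)\cap F_0^{-1}(s_Y(L))\bigr)$ then makes your conclusion correct as stated.
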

\begin{lm}\label{lm:pullpush}\mbox{}
\begin{enumerate}
\item\label{item:pull}
The pull-back map $F_0^*: A^*(Y_0) \to A^*(X_0)$ carries $A^*(\Y)$ to $A^*(\X).$
\item\label{item:push}
If $F$ is a relatively oriented proper submersion, the push-forward map
\[
(F_0)_* : \Acl^*(X_0) \to \Acl^*(Y_0)
\]
carries $\Im(s_* - t_*)$ to $\Im(s_*-t_*).$
\end{enumerate}
\end{lm}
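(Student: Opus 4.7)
\textbf{Part~\ref{item:pull}.} This part is essentially formal. Functoriality of $F$ gives the identities $s_Y \circ F_1 = F_0 \circ s_X$ and $t_Y \circ F_1 = F_0 \circ t_X$. Hence for $\alpha \in A^*(\Y)$, where $s_Y^*\alpha = t_Y^*\alpha$, contravariance of pull-back yields
\[
s_X^* F_0^*\alpha = (F_0\circ s_X)^*\alpha = (s_Y\circ F_1)^*\alpha = F_1^* s_Y^*\alpha = F_1^* t_Y^*\alpha = t_X^* F_0^*\alpha,
\]
so $F_0^*\alpha \in A^*(\X)$.

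\textbf{Part~\ref{item:push}.} The plan is to establish the dual identities
\[
(F_0)_*\circ (s_X)_* = (s_Y)_*\circ (F_1)_*, \qquad (F_0)_*\circ (t_X)_* = (t_Y)_*\circ (F_1)_*,
\]
on $\Acl^*(X_1)$. The key ingredient is Proposition~\ref{prop:proppp}~\eqref{prop:pushcomp}, which expresses each composite as a single push-forward along the shared underlying map $F_0\circ s_X = s_Y \circ F_1$, but with the composed relative orientations $o^{F_0}\circ o^{s_X}_c$ and $o^{s_Y}_c\circ o^{F_1}$ respectively. The relative-orientation compatibility~\eqref{eq:relor} is precisely what identifies these two composed orientations, so the push-forwards agree; the same argument works for $t$. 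Applying this to $\beta\in\Acl^*(X_1)$,
\[
(F_0)_*\bigl((s_X)_*\beta-(t_X)_*\beta\bigr) = (s_Y)_*(F_1)_*\beta - (t_Y)_*(F_1)_*\beta,
\]
which lies in $\Im\bigl((s_Y)_*-(t_Y)_*\bigr)$ provided $(F_1)_*\beta \in \Acl^*(Y_1)$.

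\textbf{Main obstacle.} The technical heart of the proof is showing that $(F_1)_*\beta$ has clean support in $Y_1$. Since $F_1$ is a submersion and its restriction to $\supp\beta$ is proper by Lemma~\ref{lm:proper}, the image $F_1(\supp\beta)$ is closed in $Y_1$ and contains $\supp(F_1)_*\beta$. By Lemma~\ref{lm:clsc} it therefore suffices to prove that $F_1(\supp\beta)$ is itself clean in $Y_1$. Using the functorial identities $s_Y\circ F_1=F_0\circ s_X$ and $t_Y\circ F_1=F_0\circ t_X$, this reduces further to showing that $F_0$ carries clean subsets of $X_0$ to clean subsets of $Y_0$ whenever $F$ is proper. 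I expect this point-set claim to be the main source of difficulty: unpacking cleanness as properness of $t_Y$ restricted to $s_Y^{-1}(F_0(C))$, one needs to control preimages in $Y_1$ of points in $F_0(C)$ — which are not in general contained in $F_1(X_1)$ — by combining Lemma~\ref{lm:proper} with the properness of the map $s\times t:Y_1\to Y_0\times Y_0$ built into the definition of an \'etale proper groupoid. Once this point-set fact is established, parts~\ref{item:pull} and~\ref{item:push} assemble cleanly from the two commutation relations above.
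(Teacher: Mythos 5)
Your argument is correct and follows essentially the same route as the paper: part (1) is verbatim the paper's computation, and part (2) rests on the same commutation identity from Proposition~\ref{prop:proppp}~\eqref{prop:pushcomp} together with the orientation compatibility~\eqref{eq:relor}, plus the reduction of clean support of $(F_1)_*\beta$ to the statement that proper morphisms carry clean sets to clean sets. That point-set fact, which you rightly flag as the main obstacle, is exactly the paper's Lemma~\ref{lm:pcc}; the paper proves it not by chasing preimages in $Y_1$ against $s\times t$, but via the characterization of clean subsets as those on which the projection $\pi$ to the orbit space is proper (Lemma~\ref{lm:cpp}) combined with properness of $|F|$, which is a somewhat cleaner route to the same end.
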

In light of Lemma~\ref{lm:pullpush}, we define
\[
F^* : A^*(\Y) \to A^*(\X),
\]
by $F^*\alpha := F_0^*\alpha.$ If $F$ is a relatively oriented proper submersion, we define
\[
F_* : \Acl^*(\X) \to \Acl^*(\Y)
\]
by $F_*[\alpha] := [(F_0)_* \alpha]$ for $\alpha \in \Acl(X_0).$ The operations $F^*$ and $F_*$ are again linear and continuous.
\begin{lm}\label{lm:2mor}
Let $F,G: \X \to \Y$ be morphisms in $\EPG{}$ and suppose there exists a $2$-morphism $\alpha :F \Rightarrow G$. Then $F^* = G^*.$ If $F$ and $G$ are relatively oriented proper submersions, and $\alpha$ is relatively oriented, then $F_* = G_*.$
\end{lm}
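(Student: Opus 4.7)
The pull-back statement is essentially a direct unfolding of definitions. The hypothesis that $\alpha : F \Rightarrow G$ is a $2$-morphism in $\EPG{}$ means that the underlying smooth map $\alpha : X_0 \to Y_1$ satisfies $s \circ \alpha = F_0$ and $t \circ \alpha = G_0$. For $\omega \in A^*(\Y) = \ker(s^* - t^*)$ we have $s^*\omega = t^*\omega$ in $A^*(Y_1)$, so
\[
F^*\omega = F_0^*\omega = \alpha^*(s^*\omega) = \alpha^*(t^*\omega) = G_0^*\omega = G^*\omega,
\]
and I would write this out in a single line.

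For the push-forward statement, my plan is to use $\alpha_*$ to produce an explicit element of $\Acl^*(Y_1)$ whose image under $s_* - t_*$ is $(F_0)_*\omega - (G_0)_*\omega$. First I would verify that, under the hypothesis that $F$ and $G$ are proper submersions, the map $\alpha : X_0 \to Y_1$ is itself a proper submersion: surjectivity of $d\alpha$ at every point follows from $s \circ \alpha = F_0$ being a submersion and $s$ being a local diffeomorphism, while properness follows by noting that for any compact $K \subset Y_1$ the set $\alpha^{-1}(K)$ is a closed subset of $F_0^{-1}(s(K))$, which is compact by properness of $F_0$. Using the argument of Lemma~\ref{lm:pullpush}\eqref{item:push} applied to the map $\alpha$ (viewed analogously to $F_0$), I would then check that $\alpha_*$ carries $\Acl^*(X_0)$ into $\Acl^*(Y_1)$.

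With $\alpha_*$ so available, given $[\omega] \in \Acl^*(\X)$ represented by $\omega \in \Acl^*(X_0)$, I would apply Proposition~\ref{prop:proppp}\eqref{prop:pushcomp} to $s \circ \alpha = F_0$ and $t \circ \alpha = G_0$. The relative-orientation compatibilities $o^s_c \circ o^\alpha = o^{F_0}$ and $o^t_c \circ o^\alpha = o^{G_0}$ from \eqref{eq:arelor}, together with the fact that $s,t$ are pushed forward using their canonical relative orientations by convention, ensure that the composed orientations agree with $o^{F_0}$ and $o^{G_0}$, giving
\[
(F_0)_*\omega = s_*(\alpha_*\omega), \qquad (G_0)_*\omega = t_*(\alpha_*\omega).
\]
Subtracting yields $(F_0)_*\omega - (G_0)_*\omega = (s_* - t_*)(\alpha_*\omega) \in \im(s_* - t_*)$, hence $F_*[\omega] = G_*[\omega]$ in $\Acl^*(\Y)$.

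The main obstacle I anticipate is the clean-support bookkeeping for $\alpha_*\omega$, i.e.\ reducing the needed statement to the content of Lemma~\ref{lm:pullpush}\eqref{item:push} for a map that is not part of the groupoid morphism datum but rather the underlying map of a $2$-morphism. Once that is in hand, the orientation identities in \eqref{eq:arelor} together with Proposition~\ref{prop:proppp}\eqref{prop:pushcomp} make the remainder of the argument purely formal.
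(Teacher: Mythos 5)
Your overall strategy is the paper's: the pull-back identity is the same one-line computation, and for the push-forward you produce $\alpha_*\omega\in\Acl^*(Y_1)$ and apply Proposition~\ref{prop:proppp}\eqref{prop:pushcomp} together with the orientation compatibility~\eqref{eq:arelor} to get $(F_0)_*\omega-(G_0)_*\omega=(s_*-t_*)(\alpha_*\omega)$, which vanishes in the cokernel. That part is exactly right.

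The genuine gap is in your properness argument for $\alpha$. You claim $\alpha^{-1}(K)$ is a closed subset of $F_0^{-1}(s(K))$, ``which is compact by properness of $F_0$.'' But in this paper properness of the functor $F$ means properness of $|F|:|\X|\to|\Y|$, \emph{not} of $F_0:X_0\to Y_0$, and $F_0$ is generally not a proper map of manifolds: for instance, for the groupoid $\X^\Uc$ of an open cover of $M$, the natural refinement to $\X^M$ is proper (indeed $|F|=\Id_M$) while $F_0=\coprod_\alpha(U_\alpha\hookrightarrow M)$ is not. This is precisely the reason the clean-support machinery exists; by Lemma~\ref{lm:proper}, only the restriction of $F_0$ to a clean subset is proper. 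The repair is the paper's Lemma~\ref{lm:2morprop}: restrict to the clean set $C=\supp\omega$, so that $F_0^{-1}(s(K))\cap C$ is compact and $\alpha^{-1}(K)\cap C$ is a closed subset of it (closedness of $C$ coming from Lemma~\ref{lm:cc}). Similarly, cleanness of $\supp(\alpha_*\omega)$ does not follow verbatim from Lemma~\ref{lm:pullpush}\eqref{item:push}, which concerns the components $F_0,F_1$ of a functor rather than the underlying map of a $2$-morphism; one needs the separate observation (the paper's Lemma~\ref{lm:2morcl}) that $s(\alpha(C))=F_0(C)$ and $t(\alpha(C))=G_0(C)$ are clean by Lemma~\ref{lm:pcc}, so $\alpha(C)$ is clean. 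With these two adjustments your argument becomes the paper's proof. (Your remark that $\alpha$ is a submersion is correct but is only needed so that fiber integration along $\alpha$ is defined; since $s$ is a local diffeomorphism, $\alpha$ is a submersion exactly when $F_0$ is.)
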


Next, for an object $\X$ of \EPG{}, we construct maps between the differential forms $A^*(\X)$ and the cleanly supported differential forms $\Acl^*(\X).$ By the definition of clean, for $\alpha \in \Acl^*(X_0),$ the maps $s|_{\supp(t^*\alpha)}$ and $t|_{\supp(s^*\alpha)}$ are proper, so $s_*t^*\alpha$ and $t_*s^*\alpha$ are well-defined.
\begin{lm}\label{lm:Jwd}\mbox{}
\begin{enumerate}
\item\label{item:tst}
For $\alpha \in \Acl^*(X_0),$ we have $t_*s^*\alpha = s_*t^*\alpha.$
\item\label{item:kernelJ}
For $\alpha \in \Im(s_* - t_* : \Acl^*(X_1) \to \Acl^*(X_0)),$ we have
\[
s_*t^*\alpha = 0.
\]
\item\label{item:imageJ}
For $\alpha \in \Acl^*(X_0),$ we have
\[
s_*t^*\alpha \in A^*(\X).
\]
\end{enumerate}
\end{lm}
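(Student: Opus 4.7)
The plan is to reduce both claims to identities among the structure maps of $\X$. All the maps $s,t,i,m$ and the fiber product projections that appear below are local diffeomorphisms, so we work throughout with their canonical relative orientations; these compose coherently by Lemma~\ref{lm:pb&c}, so no signs enter the computations.

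For part~\eqref{item:tst}, use the inversion $i:X_1\to X_1$, a self-inverse diffeomorphism satisfying $s\circ i=t$ and $t\circ i=s$. Lemma~\ref{lm:pp} applied to $i$ yields $i^*=i_*$, whence
\[
s_*t^*\alpha=s_*(s\circ i)^*\alpha=s_*i^*s^*\alpha=s_*i_*s^*\alpha=(s\circ i)_*s^*\alpha=t_*s^*\alpha,
\]
where the penultimate step uses Proposition~\ref{prop:proppp}~\eqref{prop:pushcomp}.

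For parts~\eqref{item:kernelJ} and~\eqref{item:imageJ}, the key preparatory step is to express each of $s^*s_*,s^*t_*,t^*s_*,t^*t_*$ on $\Acl^*(X_1)$ using the fiber product
\[
P=\{(g_1,g_2)\in X_1\times X_1:s(g_1)=t(g_2)\}
\]
with projections $p_1,p_2:P\to X_1$ and multiplication $m:P\to X_1$. The two natural pullback squares and Proposition~\ref{prop:proppp}~\eqref{prop:pushfiberprod} give $s^*t_*\beta=p_{1*}p_2^*\beta$ and $t^*s_*\beta=p_{2*}p_1^*\beta$. For the other two combinations, I would introduce the diffeomorphisms
\[
\Psi:P\to X_1\prescript{}{s}\times_s^{}X_1,\ \Psi(g_1,g_2)=(g_2,g_1g_2),\qquad \Phi:P\to X_1\prescript{}{t}\times_t^{}X_1,\ \Phi(g_1,g_2)=(g_1g_2,g_1),
\]
which identify the canonical projections of the new fiber products with $(p_2,m)$ and $(m,p_1)$ respectively, and apply Lemma~\ref{lm:pp} to absorb $\Psi,\Phi$; this yields $s^*s_*\beta=p_{2*}m^*\beta$ and $t^*t_*\beta=m_*p_1^*\beta$. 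Using the groupoid relation $s\circ m=s\circ p_2$, part~\eqref{item:kernelJ} follows from
\[
s_*t^*s_*\beta=s_*p_{2*}p_1^*\beta=(s\circ m)_*p_1^*\beta=s_*m_*p_1^*\beta=s_*t^*t_*\beta,
\]
and using $t\circ m=t\circ p_1$, part~\eqref{item:imageJ} follows from
\[
s^*(s_*t^*\alpha)=p_{2*}m^*t^*\alpha=p_{2*}(t\circ p_1)^*\alpha=p_{2*}p_1^*t^*\alpha=t^*(s_*t^*\alpha).
\]

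The main obstacle is technical rather than conceptual: one has to verify that Proposition~\ref{prop:proppp} and Lemma~\ref{lm:pp}, stated for proper submersions, apply to push-forwards of cleanly supported forms along the \'etale structure maps and along $\Psi,\Phi$, and that cleanness of support is preserved at every step of the computations — for instance, that $m^*\beta$ is clean with respect to $p_2$ whenever $\beta\in\Acl^*(X_1)$. These technicalities are handled by Lemmas~\ref{lm:sgc}--\ref{lm:proper} together with the canonical orientation compatibilities of Section~\ref{ssec:orientations}.
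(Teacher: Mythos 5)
Your proof is correct and follows essentially the same route as the paper: part~(1) via the inversion $i$ and Lemma~\ref{lm:pp}, and parts~(2)--(3) via base change over the fiber product of composable arrows together with the groupoid identities $s\circ m=s\circ p_2$ and $t\circ m=t\circ p_1$, with the properness-on-clean-supports facts you defer supplied by the paper's Lemma~\ref{lm:preJwd}. The only cosmetic difference is that where you compute $s^*s_*$ and $t^*t_*$ by transporting the fiber products $X_1\prescript{}{s}\times_s^{}X_1$ and $X_1\prescript{}{t}\times_t^{}X_1$ back to $P$ via $\Psi,\Phi$, the paper uses the single involution $q(x,y)=(i(x),m(x,y))$ of $P$ together with $t_*=s_*\circ i_*$.
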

Let
\[
J : \Acl^*(\X) \to A^*(\X)
\]
be given by $J([\alpha]) := t_* s^* \alpha,$ which is well-defined by Lemma~\ref{lm:Jwd}. Observe that $J$ is linear and continuous.

A \textbf{partition of unity} for an orbifold $\X$ is a smooth function $\rho : X_0 \to [0,1]$ with clean support such that
\[
t_* s^*\rho = 1.
\]
\begin{lm}\label{lm:pu}
For any object $\X$ of \EPG{} there exists a partition of unity.
\end{lm}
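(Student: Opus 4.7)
The plan is to construct a nonnegative $\phi\in\Acl^0(X_0)$ such that $\psi:=t_*s^*\phi$ is strictly positive everywhere on $X_0$, and then to set $\rho:=\phi/\psi$. A direct computation shows $\psi\in A^0(\X)$: for any arrow $g\in X_1$, the map $h\mapsto m(g^{-1},h)$ is a bijection $t^{-1}(t(g))\to t^{-1}(s(g))$ satisfying $s(m(g^{-1},h))=s(h)$, which gives $s^*\psi=t^*\psi$. Since $\psi(s(g))=\psi(t(g))=\psi(x)$ for every $g\in t^{-1}(x)$, one then obtains
\[
t_*s^*\rho(x)=\sum_{g\in t^{-1}(x)}\frac{\phi(s(g))}{\psi(s(g))}=\frac{1}{\psi(x)}\sum_{g\in t^{-1}(x)}\phi(s(g))=1.
\]
Because the identity arrow at $x$ contributes $\phi(x)$ to $\psi(x)$, one has $0\le\phi\le\psi$, so $\rho$ takes values in $[0,1]$; and $\supp\rho\subset\supp\phi$ is clean by Lemma~\ref{lm:clsc}.

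To build $\phi$, note that $|\X|$ is Hausdorff (Lemma~\ref{lm:quothaus}), locally compact and second countable, hence paracompact, and the quotient $q:X_0\to|\X|$ is open. Choose for each $x\in X_0$ a precompact open neighborhood $U_x$; the sets $\{q(U_x)\}_{x\in X_0}$ form an open cover of $|\X|$. Refine to a locally finite open cover $\{W_i\}_{i\in I}$, and apply the shrinking lemma in the normal space $|\X|$ to produce a further open cover $\{W_i'\}$ with $\overline{W_i'}\subset W_i$. For each $i$, pick $x_i$ with $W_i\subset q(U_{x_i})$ and set $V_i:=q^{-1}(W_i)\cap U_{x_i}$, a precompact open with $q(V_i)=W_i$. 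Since $\overline{W_i'}$ is compact, finitely many small open balls $B_{i,j}\subset V_i$ with $\overline{B_{i,j}}\subset V_i$ satisfy $\overline{W_i'}\subset\bigcup_j q(B_{i,j})$; setting $V_i':=\bigcup_j B_{i,j}$ yields a precompact open with $\overline{V_i'}\subset V_i$ and $q(V_i')\supset W_i'$. Finally choose smooth $\phi_i:X_0\to[0,1]$ equal to $1$ on $\overline{V_i'}$ and supported in $\overline{V_i}$, and define $\phi:=\sum_i\phi_i$.

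Two properties must be verified. For local finiteness of $\{\supp\phi_i\}_i$ in $X_0$, given $x\in X_0$ pick a neighborhood $N$ of $q(x)$ meeting only finitely many $\overline{W_i}$; then $q^{-1}(N)$ is a neighborhood of $x$ meeting $\supp\phi_i\subset q^{-1}(\overline{W_i})$ only for those indices, so $\phi$ is a well-defined smooth function. For cleanness of $\supp\phi$, given compact $K\subset X_0$, the image $q(K)$ is compact and meets only finitely many $\overline{W_i}$; hence
\[
(s\times t)^{-1}(\supp\phi\times K)\subset\bigcup_i(s\times t)^{-1}(\overline{V_i}\times K)
\]
is a finite union of compact sets by the properness of $s\times t$. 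Strict positivity of $\psi$ follows because $\{W_i'\}$ covers $|\X|$: for every $x\in X_0$ there exists $i$ with $q(x)\in W_i'$, and choosing $y\in V_i'$ with $q(y)=q(x)$ provides an arrow $g\in t^{-1}(x)$ with $\phi_i(s(g))=1$, so $\psi(x)\ge 1$.

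The main obstacle is the transition from a locally finite cover of $|\X|$ to a locally finite family of compactly supported smooth bumps on $X_0$ whose union of supports is still clean. The crucial mechanism is the inclusion $\supp\phi_i\subset q^{-1}(\overline{W_i})$, which forces local finiteness on $|\X|$ to lift through the open map $q$ and, together with the properness of $s\times t$, ensures that saturating any compact subset of $X_0$ by arrows meets only finitely many of the $\supp\phi_i$.
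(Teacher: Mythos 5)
Your proof is correct, and it takes a genuinely different route from the paper. The paper disposes of the lemma in two lines: it cites Behrend (pp.\ 14--15) for the existence of a refinement $F:\X'\to\X$ admitting a partition of unity, and then invokes Lemma~\ref{lm:Frho} to push it forward to $\X$. You instead construct the partition of unity directly on $\X$ itself: you build a cleanly supported $\phi\ge 0$ from bump functions organized by a locally finite cover of the quotient $|\X|$, and then normalize by $\psi=t_*s^*\phi$, using the invariance $\psi\in A^0(\X)$ (your direct argument via $h\mapsto m(g^{-1},h)$ reproves Lemma~\ref{lm:Jwd}\eqref{item:imageJ} in degree zero) to get $t_*s^*(\phi/\psi)=1$ exactly. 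The trade-off: the paper's proof is short but outsources the substantive construction to an external reference and routes it through the refinement machinery (Lemmas~\ref{lm:FJF = J} and~\ref{lm:Frho}); yours is self-contained, never leaves $\X$, and the normalization trick $\rho=\phi/\psi$ removes any need to arrange the bumps to sum correctly over orbits. It also isolates exactly where properness of $s\times t$ and openness of $\pi:X_0\to|\X|$ enter, which makes transparent why clean (rather than compact) support is the right notion for noncompact $\X$. Two steps are left slightly implicit but are routine: the union $\bigcup_i(s\times t)^{-1}(\overline{V_i}\times K)$ is finite because $(s\times t)^{-1}(\overline{V_i}\times K)$ is empty unless $q(\overline{V_i})\subset\overline{W_i}$ meets $q(K)$; and $(s\times t)^{-1}(\supp\phi\times K)$ is then compact because it is in addition closed.
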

Given a partition of unity $\rho,$ we define a linear continuous map
\[
K : A^*(\X) \to \Acl^*(\X)
\]
by $K(\alpha) := [\rho \alpha].$

\begin{lm}\label{lm:JKinv}
The maps $J$ and $K$ are inverse to one another. In particular, $K$ does not depend on the choice of the partition of unity.
\end{lm}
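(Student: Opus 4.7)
The plan is to verify the two compositions $J \circ K = \Id$ on $A^*(\X)$ and $K \circ J = \Id$ on $\Acl^*(\X)$ directly; the independence of $K$ from the choice of partition of unity $\rho$ then follows formally from the uniqueness of inverses, since $J$ is defined without reference to $\rho$. For the first identity, I would fix $\alpha \in A^*(\X)$, which by definition of $A^*(\X)$ satisfies $s^*\alpha = t^*\alpha$, and compute
\[
J(K(\alpha)) = t_* s^*(\rho \alpha) = t_*(s^*\rho \cdot s^*\alpha) = t_*(s^*\rho \cdot t^*\alpha) = (t_* s^*\rho)\cdot \alpha = \alpha,
\]
using the projection formula from Proposition~\ref{prop:proppp}\eqref{prop:pushpull} in the penultimate step and the partition-of-unity relation $t_* s^*\rho = 1$ in the last.

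For the second identity, given $[\alpha] \in \Acl^*(\X)$ with $\alpha \in \Acl^*(X_0)$, I would exhibit an explicit $\beta \in \Acl^*(X_1)$ satisfying $(s_* - t_*)\beta = \alpha - \rho\cdot t_* s^*\alpha$, from which $K(J([\alpha])) = [\rho\cdot t_*s^*\alpha] = [\alpha]$ is immediate. The natural candidate is $\beta := t^*\rho \cdot s^*\alpha.$ Combining the projection formula with Lemma~\ref{lm:Jwd}\eqref{item:tst} (which gives $s_*t^*\rho = t_*s^*\rho = 1$), one computes
\[
s_*\beta = (s_* t^*\rho)\cdot \alpha = \alpha, \qquad t_*\beta = \rho\cdot t_* s^*\alpha,
\]
so that $(s_* - t_*)\beta = \alpha - \rho\cdot t_* s^*\alpha$, as required.

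The main obstacle is to verify that $\beta = t^*\rho \cdot s^*\alpha$ actually lies in $\Acl^*(X_1)$, i.e.\ that $\supp\beta$ is clean in $X_1$; by definition this means that both $s(\supp\beta)$ and $t(\supp\beta)$ are clean subsets of $X_0$. The key observation is that $\supp\beta \subset t^{-1}(\supp\rho) \cap s^{-1}(\supp\alpha)$. For any compact $K \subset X_0$, the set $s^{-1}(K) \cap t^{-1}(\supp\rho)$ is compact by cleanness of $\supp\rho$, so $s^{-1}(K) \cap \supp\beta$ is a closed subset thereof and hence compact; symmetrically, $t^{-1}(K) \cap \supp\beta$ is compact using cleanness of $\supp\alpha$. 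Thus $s|_{\supp\beta}$ and $t|_{\supp\beta}$ are proper, so their images are closed subsets of $\supp\alpha$ and $\supp\rho$ respectively, and Lemma~\ref{lm:clsc} then guarantees that these images are clean, which completes the argument.
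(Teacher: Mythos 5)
Your proof is correct and follows essentially the same route as the paper's: your identity $(s_*-t_*)(t^*\rho\cdot s^*\alpha)=\alpha-\rho\cdot t_*s^*\alpha$ is the paper's key computation with the roles of $s$ and $t$ interchanged, the only difference in packaging being that the paper applies it in the special case $J[\alpha]=0$ to get injectivity of $J$ and then deduces surjectivity from $J\circ K=\Id$, whereas you verify $K\circ J=\Id$ directly. Your explicit check that $t^*\rho\cdot s^*\alpha$ has clean support, so that it genuinely lies in $\Acl^*(X_1)$, is a point the paper leaves implicit and is a worthwhile addition.
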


\begin{cor}\label{cor:frech}
$\Acl^*(\X)$ is continuously isomorphic to $A^*(\X)$ and therefore is Fr\'echet.
\end{cor}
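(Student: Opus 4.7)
The proof should be essentially immediate given the machinery already developed; the corollary is really just packaging what Lemma~\ref{lm:JKinv} delivers. My plan is as follows.

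First, I would observe that $J$ and $K$ are both linear and continuous by construction: $J$ is induced by the composition $t_*\circ s^*$ of pull-back and push-forward by local diffeomorphisms (hence continuous on $A^*(X_0)$) descended to the quotient $\Acl^*(\X)$, which carries the quotient topology, while $K$ is induced by the continuous map $\alpha \mapsto \rho\alpha$ composed with the quotient projection. By Lemma~\ref{lm:JKinv}, $J$ and $K$ are mutually inverse. Hence $J$ is a continuous linear bijection with continuous inverse, i.e.\ an isomorphism of topological vector spaces.

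Next, I would note that $A^*(\X)$ is Fr\'echet. Indeed, by definition $A^*(\X) = \ker(s^* - t^*)$ is a closed linear subspace of the Fr\'echet space $A^*(X_0)$, being the kernel of a continuous linear map into the Hausdorff space $A^*(X_1)$; closed subspaces of Fr\'echet spaces are Fr\'echet.

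Finally, since $\Acl^*(\X)$ is isomorphic as a topological vector space to $A^*(\X)$ via $K$, it inherits metrizability, completeness, local convexity, and Hausdorffness, and is therefore Fr\'echet. I would emphasize as a side remark that this settles the earlier ambiguity about $\Acl^*(\X)$: a priori it was only a quotient of a locally convex space by a linear subspace not known to be closed, so its Hausdorffness was not evident; the identification with $A^*(\X)$ retroactively shows that $\Im(s_* - t_*)$ is in fact closed in $\Acl^*(X_0)$.

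There is no real obstacle here — all the work has already been done in Lemmas~\ref{lm:Jwd}, \ref{lm:pu}, and~\ref{lm:JKinv}. The only point where one could slip is forgetting to check that the topological vector space isomorphism really does transfer the Fr\'echet property, so I would spell out that continuous linear bijections with continuous inverses are homeomorphisms and hence preserve metrizability and completeness.
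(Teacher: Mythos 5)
Your proposal is correct and follows exactly the route the paper intends: the corollary is immediate from Lemma~\ref{lm:JKinv} together with the already-noted continuity of $J$ and $K$ and the fact that $A^*(\X)$, being a closed subspace of the Fr\'echet space $A^*(X_0)$, is Fr\'echet. Your closing remark that this retroactively establishes Hausdorffness of the quotient (equivalently, closedness of $\Im(s_*-t_*)$) is precisely the point the paper flags when first defining $\Acl^*(\X)$.
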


In light of the preceding, it is natural to make the following definitions.
\begin{dfn}\label{dfn:pushepg}
Let $F:\X\to \Y$ be a relatively oriented proper submersion in \EPG{}. We define the \textbf{push-forward} operation
\[
F_*:A^*(\X)\lrarr A^*(\Y)
\]
by
\begin{equation}\label{eq:dfnpush}
F_*:=J\circ F_*\circ K.
\end{equation}
Assume now that $\X$ is oriented, let $pt$ denote the \'etale proper groupoid associated with the point via Example~\ref{ex:mwc}, and let $F : \X \to pt$ be the unique smooth functor. Let $\xi\in A^*(\X)$ such that $\pi(\supp(\xi))\subset |\X|$ is compact. We define
\[
\int_{\X}\xi := F_*\xi.
\]
\end{dfn}
\begin{rem}\label{rem:int}
In the situation of the preceding definition, let $\rho:X_0\to [0,1]$ be a partition of unity on $\X.$ Using $J_{pt}=\Id$ and Proposition~\ref{prop:proppp}\eqref{normalization}, we have
\[
\int_{\X}\xi = F_*\xi = (F_0)_*(K\xi) = (F_0)_*(\rho\xi)= \int_{X_0}\rho\xi.
\]
\end{rem}

\begin{ex}\label{ex:itact}
Consider the \'etale proper groupoid $\X = \X^\varphi$ associated to a trivial group action $\varphi: G \times M \to M$ as in Example~\ref{ex:taction}. Then $A^*(\X) = \Acl(\X) = A^*(M).$ A partition of unity $\rho : X_0 = M \to \R$ is given by the constant function taking the value $1/|G|.$ Let $\xi\in A^*(\X)$ such that $\pi(\supp(\xi))\subset |\X|$ is compact. Then, by Remark~\ref{rem:int} we have
\[
\int_\X \xi = \frac{1}{|G|}\int_{X_0} \xi  = \frac{1}{|G|} \int_{M} \xi.
\]
\end{ex}

To pass from the category \EPG{} to the category \Orb{}, we need the following lemma.
\begin{lm}\label{lm:JFK}
If $F : \X \to \Y$ is a refinement, then $F^*: A^*(\Y)\to A^*(\X)$ is an isomorphism with inverse $F_*$ taken with respect to the canonical relative orientation of $F.$
\end{lm}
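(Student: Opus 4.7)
The plan is to prove that $F_*$ (taken with the canonical relative orientation) is a two-sided inverse of $F^*.$ The core is a Fubini-style base change identity; together with injectivity of $F^*$ (which follows from the equivalence property of $F$), it gives both directions.

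The key identity I would establish is that, for any $\omega\in\Acl^*(X_0),$
\[
F_0^*\,(t_\Y)_*\,(s_\Y)^*\,(F_0)_*\,\omega \;=\; (t_\X)_*\,(s_\X)^*\,\omega.
\]
Applied to $\omega=\rho_\X\alpha$ with $\alpha\in A^*(\X)$ and $\rho_\X$ a partition of unity on $\X,$ the left-hand side is $F^*F_*\alpha$ by Definition~\ref{dfn:pushepg}, while the right-hand side is $J_\X K_\X\alpha=\alpha$ by Lemma~\ref{lm:JKinv}; hence $F^*F_*=\Id_{A^*(\X)}.$

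To prove the identity, I would apply Proposition~\ref{prop:proppp}~\eqref{prop:pushfiberprod} twice, with canonical orientations matching via Lemma~\ref{lm:pb&c}. First, form the fiber product
\[
P:=X_0\prescript{}{F_0}\times^{}_{Y_0,\,s_\Y}Y_1,
\]
which exists because $F_0$ and $s_\Y$ are local diffeomorphisms, hence strongly smooth and transverse, and obtain $(s_\Y)^*(F_0)_*=(\pi_{Y_1})_*(\pi_{X_0})^*$ on cleanly supported forms. Second, since $F$ is a refinement, full faithfulness means the square
\[
\xymatrix{X_1\ar[r]^{F_1}\ar[d]_{(s_\X,t_\X)} & Y_1\ar[d]^{(s_\Y,t_\Y)}\\ X_0\times X_0\ar[r]^{F_0\times F_0} & Y_0\times Y_0}
\]
is cartesian; rearranging, this yields a cartesian square
\[
\xymatrix{X_1\ar[r]^{\phi}\ar[d]_{t_\X} & P\ar[d]^{t_\Y\circ\pi_{Y_1}}\\ X_0\ar[r]^{F_0} & Y_0}
\]
with $\phi(\tilde g)=(s_\X\tilde g,F_1\tilde g).$ A second application of base change then gives $F_0^*(t_\Y\circ\pi_{Y_1})_*=(t_\X)_*\phi^*.$ Composing both formulas, using $\pi_{X_0}\circ\phi=s_\X$ and Proposition~\ref{prop:proppp}~\eqref{prop:pushcomp} to split the push-forward along $t_\Y\circ\pi_{Y_1},$ yields the identity.

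It remains to verify $F_*F^*=\Id_{A^*(\Y)}.$ I would first show $F^*$ is injective: if $F_0^*\beta=0$ for $\beta\in A^*(\Y),$ then essential surjectivity of $F$ guarantees, for each $y\in Y_0,$ some $g\in Y_1$ with $s_\Y(g)=F_0(x)$ and $t_\Y(g)=y,$ and the invariance $(s_\Y)^*\beta=(t_\Y)^*\beta$ combined with the fact that $s_\Y,t_\Y$ are local diffeomorphisms forces $\beta|_y=0.$ Then for any $\beta\in A^*(\Y),$ applying $F^*F_*=\Id$ to the form $F^*\beta\in A^*(\X)$ gives $F^*(F_*F^*\beta-\beta)=0,$ and injectivity of $F^*$ yields $F_*F^*\beta=\beta.$ The main obstacle I foresee is identifying the second cartesian square — this rests on the characterization of refinements as essentially surjective and fully faithful functors — and checking that canonical relative orientations are preserved through the two base change steps.
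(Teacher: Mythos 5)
Your proposal is correct, and its core coincides with the paper's. The identity $F_0^*\,t_*\,s^*\,(F_0)_*=t_*\,s^*$ is exactly the paper's Lemma~\ref{lm:FJF = J}, proved there by the same device you describe: full faithfulness gives the Cartesian square identifying $X_1\simeq(X_0\times_{Y_0}Y_1)\times_{Y_0}X_0,$ and two applications of Proposition~\ref{prop:proppp}~\eqref{prop:pushfiberprod} with orientations matched via Lemma~\ref{lm:pb&c} yield $F^*\circ J\circ F_*=J,$ hence $F^*\circ F_*=\Id$ after composing with $K.$ Where you diverge is in getting $F_*\circ F^*=\Id$: the paper proves a second commutation, $F_*\circ K\circ F^*=K$ (Lemma~\ref{lm:FKF = K}), which rests on the projection formula and on Lemma~\ref{lm:Frho}, the statement that $F_*\rho$ is again a partition of unity; you instead prove injectivity of $F^*$ directly from essential surjectivity and the invariance $s^*\beta=t^*\beta,$ and then deduce $F_*F^*=\Id$ formally from the left-inverse identity. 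Both arguments ultimately invoke essential surjectivity through the same mechanism (the surjective local diffeomorphism $s\circ p_2:X_0\times_{Y_0}Y_1\to Y_0$), so the difference is one of packaging: your route is slightly more economical for this lemma alone, while the paper's Lemma~\ref{lm:Frho} is reused elsewhere (e.g., in the proof of Lemma~\ref{lm:pu}). Your injectivity argument and the identification of the second Cartesian square are both sound, so there is no gap.
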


The main definition of the paper is the following.
\begin{dfn}\label{dfn:push}
Let $\X$ be an object of \Orb{}. We define the \textbf{differential forms} $A^*(\X)$ the same as for $\X$ considered as an object in \EPG{}. Integration is also defined the same as for $\X$ considered as an object in \EPG{}.
Let $f: \X \to \Y$ be the morphism in \Orb{} given by the diagram
\[
\X \overset{R}{\leftarrow} \X' \overset{F}{\rightarrow} \Y.
\]
We define the \textbf{pull-back} operation
\[
f^* : A^*(\Y) \to A^*(\X)
\]
by
\[
f^*\alpha :=R_* \circ F^*\alpha.
\]
If $f$ is a relatively oriented proper submersion, we define the \textbf{push-forward} operation
\[
f_* : A^*(\X) \to A^*(\Y)
\]
by
\[
f_* \alpha :=  F_* \circ R^*\alpha.
\]
\end{dfn}
Thus defined, the operations $f^*$ and $f_*$ are linear and continuous, being a composition of such.

\begin{ex}
Consider the orbifolds $\Y$ and $\mZ$ from Example~\ref{ex:fb}. By Examples~\ref{ex:mwcf} and~\ref{ex:actionf}, we have $A^0(\Y) = A^0([0,\infty))$ and $A^0(\mZ) = A^0(\R)^{\Z/2}.$
\end{ex}

\subsection{Clean subsets}\label{ssec:clean}
\subsubsection{Clean subsets and properness}
Let $\X$ be an object of~\EPG{}.
\begin{lm}\label{lm:cc}
A clean subset $C \subset X_0$ is closed.
\end{lm}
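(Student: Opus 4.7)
The plan is to show $C$ contains each of its limit points. The central observation is that the unit map $e \colon X_0 \to X_1$ of the \'etale groupoid satisfies $s \circ e = t \circ e = \Id_{X_0}$, and hence provides a canonical section of $s$ with the property that $e(C) \subset s^{-1}(C)$.

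Given $x \in \overline{C}$, I would choose a sequence $x_n \in C$ converging to $x$, which is legitimate since manifolds with corners are metrizable and hence first countable, and lift each point to $g_n := e(x_n) \in s^{-1}(C)$. Since $t(g_n) = x_n$, the compact set $K := \{x_n : n \geq 1\} \cup \{x\} \subset X_0$ contains the image of the entire sequence $\{g_n\}$ under $t|_{s^{-1}(C)}$. By cleanness of $C$, the preimage $(t|_{s^{-1}(C)})^{-1}(K)$ is compact, and being compact inside the Hausdorff space $X_1$ it is also closed there. Hence some subsequence $g_{n_k}$ converges in $X_1$ to a point $g$ which still lies in $s^{-1}(C)$. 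Continuity of $s$ then yields $s(g) = \lim_k s(e(x_{n_k})) = \lim_k x_{n_k} = x$, while $g \in s^{-1}(C)$ forces $s(g) \in C$, so $x \in C$.

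The argument is essentially straightforward; the only subtlety worth noting is that cleanness supplies compactness of the preimage as a subspace of $s^{-1}(C)$ (with its subspace topology from $X_1$), but since compactness is intrinsic this is the same as being a compact, and therefore closed, subset of the ambient manifold $X_1$, which is exactly what is needed so that subsequential limits of $\{g_n\}$ remain in $s^{-1}(C)$.
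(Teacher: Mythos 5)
Your proof is correct and rests on the same key idea as the paper's: use the unit section $e$ to lift points of $C$ into the domain of the proper map supplied by cleanness, and conclude closedness from the resulting compactness. The paper phrases this more compactly by writing $C = s(e(X_0)\cap t^{-1}(C))$ and invoking that a proper map (here the restriction of $s|_{t^{-1}(C)}$ to the closed set $e(X_0)$) has closed image, whereas you unwind the same mechanism sequentially; the substance is identical.
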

\begin{proof}
Observe that the image of the identity map $e : X_0 \to X_1$ is closed. Moreover, we have $C = s(e(X_0) \cap t^{-1}(C)).$ Since $s|_{t^{-1}(C)}$ is proper and $e(X_0)$ is closed, it follows that $s|_{t^{-1}(C) \cap e(X_0)}$ is proper. So, its image $C$ is closed.
\end{proof}
\begin{lm}\label{lm:prc}
Let $Z$ be a topological space that is sequential and Hausdorff, let $f : Z \to Y$ be a continuous map, and let $C \subset Z.$ If $f|_C : C \to Y$ is proper, then $C$ is closed.
\end{lm}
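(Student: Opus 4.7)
The plan is to argue by contradiction, exploiting the sequential hypothesis to turn a failure of closedness into a concrete sequence, and then using properness of $f|_C$ to trap that sequence inside a compact subset of $Z.$ Since $Z$ is sequential, a subset is closed if and only if it is sequentially closed, so if $C$ were not closed there would exist a sequence $(c_n) \subset C$ with $c_n \to z$ in $Z$ for some $z \notin C.$ The goal reduces to showing that this $z$ must in fact lie in $C,$ yielding the contradiction.

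The key step is to package the sequence and its limit into a compact subset of $Y.$ Continuity of $f$ gives $f(c_n) \to f(z)$ in $Y,$ and a standard open-cover argument (choose first an open $V$ containing $f(z),$ which absorbs all but finitely many $f(c_n),$ then one extra open per remaining index) shows that
\[
K := \{f(c_n) : n \in \N\} \cup \{f(z)\}
\]
is compact in $Y.$ Properness of $f|_C$ then forces $D := (f|_C)^{-1}(K)$ to be compact in $C,$ and by transitivity of the subspace topology $D$ is compact as a subspace of $Z$ as well. Since $Z$ is Hausdorff, compact subsets of $Z$ are closed, so $D$ is closed in $Z.$

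To conclude, each $c_n$ belongs to $D$ and $c_n \to z$ in $Z;$ closedness of $D$ in $Z$ then gives $z \in D \subset C,$ contradicting $z \notin C.$ I do not anticipate any serious obstacle. The only point that deserves mild care is that $Y$ is not assumed Hausdorff, which rules out automatic invocation of ``a convergent sequence together with its limit is compact'' as a cited fact; one verifies it by hand as indicated. Notice the argument uses sequentiality of $Z$ exactly once, to produce the sequence, and uses Hausdorffness of $Z$ exactly once, to upgrade compactness of $D$ to closedness in $Z.$
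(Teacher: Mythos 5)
Your proof is correct, and it runs on the same engine as the paper's: sequentiality of $Z$ reduces closedness to capturing limits of sequences, properness produces a compact set containing the sequence, and Hausdorffness of $Z$ upgrades that compact set to a closed one that must contain the limit. The one genuine difference is where the auxiliary compact set lives. The paper forms it upstairs: it takes $K = \{c_n\} \cup \{z\} \subset Z$, observes that $K$ is closed in $Z$ by Hausdorffness, so that $f|_{C \cap K}$ is proper as the restriction of a proper map to a closed subset, and then exhibits $C \cap K = (f|_{C\cap K})^{-1}(f(K))$ as compact. You form it downstairs: $K = \{f(c_n)\} \cup \{f(z)\} \subset Y$, and apply properness of $f|_C$ directly to get the compact set $D = (f|_C)^{-1}(K)$. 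Your route is marginally more economical, as it avoids the intermediate step about restrictions of proper maps, at the small cost of having to verify by hand that a convergent sequence together with its limit is compact in the possibly non-Hausdorff space $Y$ --- a point you correctly flag and correctly resolve with the open-cover argument. The paper pays the symmetric price of needing $K$ closed in $Z$, which Hausdorffness supplies for free. Both arguments are complete; there is no gap in yours.
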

\begin{proof}
By definition of a sequential space, $C$ is closed if the limit of each convergence sequence in $C$ belongs to $C.$ Since a sequence together with its limit is a compact set, it suffices to show for each compact subset $K \subset Z$ that $C \cap K$ is closed. Since $Z$ is Hausdorff, $K$ is closed in $Z$, so $K \cap C$ is closed in $C$ and $f|_{C \cap K}$ is proper. So, $C \cap K = (f|_{C \cap K})^{-1}(f(K))$ is compact and therefore closed as desired.
\end{proof}

\begin{lm}\label{lm:seqprop}
Let $f : M \to N$ be a continuous map of metrizable spaces. Then $f$ is proper if and only if for every sequence $p_i \in M$ such that $f(p_i)$ is convergent, possibly after passing to a subsequence, there exists $p \in M$ such that $p_i \to p.$
\end{lm}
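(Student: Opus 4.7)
The plan is to prove both implications by exploiting the fact that for metrizable spaces, compactness and sequential compactness coincide.

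For the forward direction, suppose $f$ is proper and let $p_i \in M$ be a sequence such that $f(p_i) \to q$ in $N$. The set $K := \{q\} \cup \{f(p_i) : i \in \N\}$ is compact in $N$, being the union of a convergent sequence with its limit. By properness, $f^{-1}(K)$ is compact in $M$, and since $M$ is metrizable this preimage is sequentially compact. As $p_i \in f^{-1}(K)$ for all $i$, a subsequence converges to some $p \in f^{-1}(K) \subset M$.

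For the backward direction, assume the sequential condition and let $K \subset N$ be compact. To show $f^{-1}(K)$ is compact, it suffices (since $M$ is metrizable) to verify sequential compactness. Let $p_i \in f^{-1}(K)$. Then $f(p_i) \in K$, and $K$ is sequentially compact, so after passing to a subsequence we may assume $f(p_i) \to q \in K$. Applying the hypothesis, a further subsequence $p_{i_k}$ converges to some $p \in M$. Continuity of $f$ forces $f(p) = \lim_k f(p_{i_k}) = q \in K$, so $p \in f^{-1}(K)$, completing the argument.

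Neither direction presents a genuine obstacle: both rely on the same metric-topology fact (compact $\Leftrightarrow$ sequentially compact in metrizable spaces) together with continuity to identify the limit. The only point requiring a touch of care is the backward direction, where one must remember to invoke continuity at the end so that the limit $p$ lies in the preimage $f^{-1}(K)$ and not merely in $M$.
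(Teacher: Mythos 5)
Your proof is correct and follows essentially the same route as the paper's: both directions use the equivalence of compactness and sequential compactness in metrizable spaces together with the compactness of a convergent sequence adjoined to its limit. If anything, you are slightly more careful than the paper in the backward direction, where you explicitly invoke continuity to place the limit $p$ in $f^{-1}(K)$.
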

\begin{proof}
We first prove the `if' part of the lemma. For $K \subset N$ compact we prove that $f^{-1}(K)$ is compact. Indeed, since $M$ is a metrizable space, it suffices to show that any sequence $p_i \in f^{-1}(K)$ has a convergence subsequence. But $f(p_i) \in K,$ so possibly after passing to a subsequence, $f(p_i)$ is convergent. By assumption, after possibly passing to a subsequence again, there exists $p \in M$ such that $p_i \to p,$ which shows that $f^{-1}(K)$ is compact as desired.

Conversely, assume $f$ is proper and let $p_i \in M$ be a sequence such that $f(p_i)$ converges to $q$. Then, the set $K = \{f(p_i)\}_{i = 1}^\infty \cup \{q\}$ is compact by the definition of convergence, and $p_i \in f^{-1}(K).$ Since $f$ is proper, $f^{-1}(K)$ is compact, and possibly after passing to a subsequence, there exists $p \in f^{-1}(K)$ such that $p_i \to p.$
\end{proof}

\begin{lm}\label{lm:projopen}
The projection $\pi : X_0 \to |\X|$ is an open map.
\end{lm}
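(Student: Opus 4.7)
The plan is to unwind the definition of the quotient topology on $|\X|$ and use the \'etale hypothesis to produce openness of $\pi(U)$ for any open $U\subset X_0$. By definition, $\pi(U)\subset |\X|$ is open if and only if its preimage $\pi^{-1}(\pi(U))\subset X_0$ is open.

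First I would identify this preimage explicitly. Two objects $x,x'\in X_0$ satisfy $\pi(x)=\pi(x')$ precisely when there is a morphism $\varphi\in X_1$ with $s(\varphi)=x$ and $t(\varphi)=x'$. Therefore
\[
\pi^{-1}(\pi(U))=t(s^{-1}(U)).
\]
Now $s:X_1\to X_0$ is continuous, so $s^{-1}(U)$ is open in $X_1$. Because $\X$ is \'etale, the target map $t:X_1\to X_0$ is a local diffeomorphism and in particular an open map, so $t(s^{-1}(U))$ is open in $X_0$. By the definition of the quotient topology, $\pi(U)$ is open in $|\X|$, and $\pi$ is an open map.

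There is no real obstacle here beyond recognizing that the \'etale hypothesis on $t$ gives openness for free; properness of $\X$ is not needed for this statement (it is needed instead for the Hausdorff property of $|\X|$ recorded in the subsequent Lemma~\ref{lm:quothaus}).
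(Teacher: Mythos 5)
Your argument is correct and essentially identical to the paper's: both identify $\pi^{-1}(\pi(U))$ as the saturation of $U$ under the groupoid action and use that the source/target maps are local diffeomorphisms, hence open (the paper writes the saturation as $s(t^{-1}(U))$ rather than $t(s^{-1}(U))$, but these agree via the inversion map and the argument is the same). Your closing remark that properness is not needed here is also accurate.
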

\begin{proof}
Indeed, let $U \subset X_0$ be open. Then, since $s$ is a local diffeomorphism and hence an open map, it follows that
\[
\pi^{-1}(\pi(U)) = s(t^{-1}(U))
\]
is open. By the definition of the quotient topology, it follows that $\pi(U)$ is open.
\end{proof}

\begin{lm}\label{lm:quothaus}
The space $|\X|$ is Hausdorff.
\end{lm}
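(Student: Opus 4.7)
The plan is to reduce the Hausdorff property of $|\X|$ to the closedness of the orbit equivalence relation on $X_0$, and then to deduce that closedness from the properness assumption on $\X.$

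First, I would invoke the standard topological fact that if $\pi : Z \to Z/\!\sim$ is an open surjective quotient map, then $Z/\!\sim$ is Hausdorff if and only if the graph $R = \{(x,y) \in Z \times Z : x \sim y\}$ is closed in $Z \times Z.$ The ``only if'' direction is immediate: $R = (\pi \times \pi)^{-1}(\Delta)$ is the preimage of the closed diagonal under a continuous map. For the ``if'' direction, openness of $\pi$ implies that $\pi \times \pi$ is also open; the complement $Z \times Z \setminus R$ is open and saturated under $\pi \times \pi,$ so its image is an open set in $Z/\!\sim \times Z/\!\sim,$ and this image is precisely the complement of the diagonal. By Lemma~\ref{lm:projopen}, the projection $\pi : X_0 \to |\X|$ is an open surjection, so it suffices to show that the graph $R$ of the relation $x \sim y \iff \exists\, \gamma \in X_1 \text{ with } s(\gamma)=x, t(\gamma)=y$ is closed in $X_0 \times X_0.$

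Next, I would observe that this graph is exactly the image of the map $s \times t : X_1 \to X_0 \times X_0.$ By the standing assumption that $\X$ is a proper groupoid, $s \times t$ is proper. Since $X_0 \times X_0$ is a manifold with corners, hence locally compact and Hausdorff, a proper map into it has closed image. Concretely, one may argue via sequences using Lemma~\ref{lm:seqprop}: if $(s \times t)(\gamma_n) \to (x,y),$ the convergent target sequence is eventually contained in a compact set, so possibly after passing to a subsequence $\gamma_n \to \gamma$ for some $\gamma \in X_1,$ and continuity of $s \times t$ yields $(s \times t)(\gamma) = (x,y).$ Thus $R$ is closed, and the preceding paragraph gives the Hausdorff property of $|\X|.$

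I do not anticipate any substantial obstacle. The argument is essentially bookkeeping on top of the two inputs already available: properness of $s \times t$ (built into the definition of an étale proper groupoid) and openness of $\pi$ (Lemma~\ref{lm:projopen}). The only mildly delicate point is the classical quotient-topology lemma characterizing Hausdorffness of the quotient for open equivalence relations, which I would either cite or prove in two lines as above.
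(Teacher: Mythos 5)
Your proposal is correct and follows essentially the same route as the paper: the paper's (much terser) proof likewise identifies the equivalence relation with $\im(s\times t)$, notes it is closed by properness of $\X$, and concludes Hausdorffness from the openness of $\pi$ established in Lemma~\ref{lm:projopen}. You have simply spelled out the standard quotient-topology lemma and the closed-image argument that the paper leaves implicit.
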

\begin{proof}
The quotient $|\X|$ is defined by the equivalence relation $\im(s\times t)\subset X_0\times X_0$. By properness of $\X$, the relation is closed. It now follows from Lemma~\ref{lm:projopen} that $|\X|$ is Hausdorff.
\end{proof}

\begin{lm}\label{lm:metrizable}
The space $|\X|$ is metrizable.
\end{lm}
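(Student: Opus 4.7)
The plan is to verify the hypotheses of Urysohn's metrization theorem for $|\X|$, namely that it is Hausdorff, second countable, and regular. Hausdorffness is already Lemma~\ref{lm:quothaus}, so the remaining work is to extract second countability and regularity from the structure of $X_0$ together with the openness of the quotient projection $\pi : X_0 \to |\X|$ given by Lemma~\ref{lm:projopen}.

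First I would dispatch second countability. Since $X_0$ is a manifold with corners it admits a countable basis $\{B_n\}_{n \in \N}$. By Lemma~\ref{lm:projopen} each $\pi(B_n)$ is open in $|\X|$. These sets form a basis for $|\X|$: for any open $V \subset |\X|$ and any $[x] \in V$, choose a lift $x \in \pi^{-1}(V) \subset X_0$ and pick $n$ with $x \in B_n \subset \pi^{-1}(V)$; then $[x] \in \pi(B_n) \subset V$.

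Next I would establish local compactness, from which regularity follows automatically in the presence of Hausdorffness. Given $[x] \in |\X|$, lift to some $x \in X_0$ and pick a compact neighborhood $K \subset X_0$ of $x$, which exists since manifolds with corners are locally compact. Then $\pi(K)$ is compact by continuity, and since $\pi(\mathrm{int}\,K)$ is open by Lemma~\ref{lm:projopen} and contains $[x]$, the compact set $\pi(K)$ is a neighborhood of $[x]$.

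Having verified that $|\X|$ is Hausdorff, second countable, and locally compact, regularity is automatic and Urysohn's metrization theorem yields the desired conclusion. I do not anticipate a serious obstacle: all the ingredients are either already proven in the lemmas just above or are standard features of manifolds with corners, and the argument is entirely point-set in nature, requiring no further groupoid-theoretic input beyond openness of $\pi$ and Hausdorffness of the quotient.
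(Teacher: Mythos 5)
Your proposal is correct and follows essentially the same route as the paper: both arguments transfer second countability and local compactness from $X_0$ to $|\X|$ via the openness of $\pi$ (Lemma~\ref{lm:projopen}), invoke Hausdorffness from properness, deduce regularity from local compactness plus Hausdorffness, and conclude by the Urysohn metrization theorem. You merely spell out the inheritance arguments that the paper leaves implicit.
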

\begin{proof}
It follows from Lemma~\ref{lm:projopen} that $|\X|$ inherits the locally compact and second countable properties from $X.$ Since $\X$ is a proper groupoid, $|\X|$ is Hausdorff. As a locally compact Hausdorff space, $|\X|$ is regular. By the Urysohn metrization theorem, a second countable regular Hausdorff space is metrizable.
\end{proof}

\begin{lm}\label{lm:cpp}
A subset $C \subset X_0$ is clean if and only if $\pi|_C : C \to |\X|$ is proper.
\end{lm}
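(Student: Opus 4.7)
I would prove both implications using the sequential characterization of properness from Lemma~\ref{lm:seqprop}, which applies since $X_0$, $X_1$, and $|\X|$ are all metrizable (by Lemma~\ref{lm:metrizable} for $|\X|$, and because manifolds with corners are metrizable for $X_0, X_1$).

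For the forward direction, assume $C$ is clean and let $(x_i) \subset C$ be a sequence with $\pi(x_i) \to [y]$ in $|\X|$ for some $y \in X_0$ (any representative will do). The plan is to lift a subsequence of $(\pi(x_i))$ to a sequence in $X_0$ converging to $y$ and then use cleanness to deduce convergence of the original sequence in $C$. Concretely, since $X_0$ is first countable, pick a neighborhood basis $(V_k)$ at $y$; by openness of $\pi$ (Lemma~\ref{lm:projopen}), each $\pi(V_k)$ is a neighborhood of $[y]$, so one can extract a subsequence $(x_{i_k})$ and pick morphisms $\phi_k \in X_1$ with $s(\phi_k)=x_{i_k}$ and $y_k := t(\phi_k) \in V_k$, so $y_k \to y$. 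Then $\phi_k \in s^{-1}(C)$, and cleanness of $C$ (in the form that $t|_{s^{-1}(C)}$ is proper) combined with $t(\phi_k) \to y$ gives, via Lemma~\ref{lm:seqprop}, a further subsequence $\phi_{k_j} \to \phi \in s^{-1}(C)$. Applying $s$ yields $x_{i_{k_j}} \to s(\phi) \in C$, establishing properness of $\pi|_C$.

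For the reverse direction, assume $\pi|_C$ is proper. To show $t|_{s^{-1}(C)}$ is proper, take a sequence $\phi_i \in s^{-1}(C)$ with $t(\phi_i) \to y \in X_0$. Setting $x_i = s(\phi_i) \in C$, continuity of $\pi$ and the identity $\pi \circ s = \pi \circ t$ give $\pi(x_i) = \pi(t(\phi_i)) \to \pi(y)$; by properness of $\pi|_C$ and Lemma~\ref{lm:seqprop}, some subsequence $x_{i_k} \to x \in C$. Then $(s \times t)(\phi_{i_k}) = (x_{i_k}, t(\phi_{i_k})) \to (x,y)$ in $X_0 \times X_0$, and properness of $\X$, i.e.\ properness of $s \times t$, yields a further subsequence with $\phi_{i_{k_j}} \to \phi \in X_1$; necessarily $s(\phi) = x \in C$, so $\phi \in s^{-1}(C)$, completing the argument.

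The two directions are essentially dual packagings of the same observation; the main subtlety I would watch for is the first direction, where one must lift convergence from $|\X|$ to $X_0$. The openness of $\pi$ together with first countability of $X_0$ makes the lift routine, but it is the only step that really uses something beyond the definitions, so it is where care is needed.
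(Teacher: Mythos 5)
Your proof is correct. The forward direction is essentially the paper's argument: lift the convergent sequence $\pi(x_i)$ to a convergent sequence in $X_0$ using openness of $\pi$ (Lemma~\ref{lm:projopen}), choose connecting morphisms, and apply properness of $t|_{s^{-1}(C)}$ via the sequential criterion of Lemma~\ref{lm:seqprop}; the paper concludes $x=s(z)\in C$ by invoking closedness of $C$ (Lemma~\ref{lm:cc}), whereas you get it for free because the limit produced by Lemma~\ref{lm:seqprop} already lives in the domain $s^{-1}(C)$ --- either way is fine. Where you genuinely diverge is the reverse direction. The paper argues non-sequentially: for compact $K$ it shows the images of $s^{-1}(C)\cap t^{-1}(K)$ under $s$ and $t$ are compact (using properness of $\pi|_C$ and openness of $\pi$), then pulls back along the proper map $s\times t$ and uses closedness (via Lemma~\ref{lm:prc}) to conclude compactness. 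You instead run the sequential criterion again: extract a convergent subsequence of $s(\phi_i)$ in $C$ from properness of $\pi|_C$, note $(s\times t)(\phi_i)$ then converges, and use properness of $s\times t$ to converge the $\phi_i$ themselves. Your version is more uniform (both directions are the same sequential template) and shorter, at the cost of leaning harder on metrizability of $X_0$, $X_1$, and $|\X|$; the paper's set-theoretic version of the reverse direction is the one that would survive if one wanted to weaken the metrizability hypotheses, which the authors explicitly flag as a concern for infinite-dimensional generalizations. Both arguments are complete and use only lemmas already available at this point in the paper.
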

\begin{proof}
First, suppose $C$ is clean. We prove that $\pi|_C$ is proper. Let $x_i \in C$ be a sequence such that $\pi(x_i) \to y \in |\X|.$ By Lemmas~\ref{lm:seqprop} and~\ref{lm:metrizable}, it suffices to prove that possibly after passing to a subsequence, there exists $x \in C$ such that $x_i \to x.$ Indeed, by Lemma~\ref{lm:projopen} we can find a sequence $x_i' \in X_0$ such that $\pi(x_i') = \pi(x_i)$ and $x_i' \to x' \in X_0.$ Since $\pi(x_i') = \pi(x_i),$ we can choose a sequence $z_i \in X_1$ such that $s(z_i) = x_i$ and $t(z_i) = x_i'.$ Again invoking Lemma~\ref{lm:seqprop}, since $t|_{s^{-1}(C)}$ is proper, possibly after passing to a subsequence, there exists $z \in X_1$ such that $z_i \to z.$ Thus, choosing $x = s(z),$ continuity of $s$ implies $x_i \to x.$ It follows from Lemma~\ref{lm:cc} that $x \in C$ as desired.

Conversely, suppose $\pi|_C : C \to |\X|$ is proper and let $K \subset X_0$ be compact. We show that $s^{-1}(C) \cap t^{-1}(K)$ is compact. Indeed,
\[
s(s^{-1}(C) \cap t^{-1}(K)) = C \cap s(t^{-1}(K)) = C \cap \pi^{-1}(\pi(K)),
\]
which is compact by properness of $\pi|_C.$ On the other hand, since $\pi|_C$ is proper, $\pi(C)$ is closed, and by continuity $\pi^{-1}(\pi(C))$ is also closed. Thus,
\[
t(s^{-1}(C) \cap t^{-1}(K)) = K \cap t(s^{-1}(C)) = K \cap \pi^{-1}(\pi(C))
\]
is compact as a closed subset of a compact set. Since the product of compact sets is compact, it follows that $(s\times t)(s^{-1}(C) \cap t^{-1}(K))$ is compact. Since $s\times t : X_1 \to X_0 \times X_0$ is proper by the definition of \EPG, we conclude that  $(s\times t)^{-1}((s\times t)(s^{-1}(C) \cap t^{-1}(K))$ is compact. Lemma~\ref{lm:prc} implies that $C$ is closed, and $K$ is closed because it is compact, so $s^{-1}(C) \cap t^{-1}(K)$ is closed by continuity of $s$ and $t.$ Since
\[
s^{-1}(C) \cap t^{-1}(K) \subset (s\times t)^{-1}((s\times t)(s^{-1}(C) \cap t^{-1}(K)),
\]
it follows that $s^{-1}(C) \cap t^{-1}(K)$ is compact as a closed subset of a compact set.
\end{proof}
\begin{rem}
One can prove Lemma~\ref{lm:cpp} without using metrizability of $X_0,X_1,|\X|,$ but instead using the local compactness of $X_0.$ However, given the potential interest in generalizing the results of this paper to infinite dimensional \'etale proper groupoids such as polyfolds, it seemed preferable to avoid using local compactness in an essential way. For polyfolds, at least when the orbit space is paracompact, the metrizability assumptions hold. See Theorems~2.2 and~7.2 of~\cite{HoferWysockiZehnderbook}.
\end{rem}

\begin{proof}[Proof of Lemma~\ref{lm:sgc}]
Since $C$ is clean, $s(C)$ is clean, so $t|_{s^{-1}(s(C))}$ is proper. Since $C$ is closed by definition, also $t|_{s^{-1}(s(C))\cap C}$ is proper. But $s^{-1}(s(C))\cap C = C,$ so $t|_C$ is proper. A similar argument shows that $s|_C$ is proper.
\end{proof}

\begin{proof}[Proof of Lemma~\ref{lm:clsc}]
For $i = 0,$ this follows from the definition since the restriction of a proper map to a closed set is proper. We turn to the case $i = 1.$  Let $D \subset C \subset X_1$ be a closed subset. By Lemma~\ref{lm:sgc}, it follows that $s(D),t(D),$ are closed. Since $s(D) \subset s(C)$ and $t(D) \subset t(C),$ the case $i = 0$ implies that $s(D),t(D),$ are clean, which means that $D$ is clean as desired.
\end{proof}
\begin{proof}[Proof of Lemma~\ref{lm:cleanunion}]
In the case $i = 0,$ this follows from the fact that if the restriction of a map to each of a finite collection of sets is proper, then so is its restriction to their union. The case $i = 1$ follows from the definition and the case $i = 0.$
\end{proof}

\subsubsection{Clean subsets and morphisms}\label{sssec:csm}
\begin{proof}[Proof of Lemma~\ref{lm:proper}]
First we prove that $F_0|_{C_0} : C_0 \to Y_0$ is proper. Since $Y_0$ is Hausdorff, a compact subset $K \subset Y_0$ is closed. So, it suffices to prove that the closed subset
\[
(F_0|_{C_0})^{-1}(K)\subset C_0
\]
is contained in a compact set. This follows from the properness of $|F|,$ Lemma~\ref{lm:cpp}, and the commutativity of the following diagram:
\[
\xymatrix{
C_0 \ar[r]^{F_0|_{C_0}} \ar[d]^{\pi|_{C_0}} & Y_0 \ar[d]^{\pi} \\
|\X| \ar[r]^{|F|} & |\Y|.
}
\]
To prove that $F_1|_{C_1} : C_1 \to Y_1$ is proper, we consider the following diagram:
\[
\xymatrix{
C_1 \ar[d]^{s\times t|_{C_1}} \ar[rrr]^{F_1|_{C_1}} &&& Y_1 \ar[d]^{s\times t} \\
s(C_1) \times t(C_1) \ar[rrr]^(.55){F_0|_{s(C_1)} \times F_0|_{t(C_1)}} &&& Y_0 \times Y_0.
}
\]
By the part of the lemma that we have already proved, the map $F_0|_{s(C_1)} \times F_0|_{t(C_1)}$ is proper. The map $s \times t$ is proper by the definition of \EPG{}. Its restriction to $C_1$ is proper because $C_1$ is closed by definition. So, the commutativity of the diagram implies that $F_1|_{C_1}$ is proper.
\end{proof}

\begin{lm}\label{lm:pcc}
Let $F : \X \to \Y$ be a morphism in \EPG{} and let $C_i \subset X_i$ be clean subsets. If $F$ is proper, then $F_i(C_i) \subset Y_i$ is clean.
\end{lm}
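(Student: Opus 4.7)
The plan is to reduce the $i=1$ case to the $i=0$ case by functoriality, and to handle the $i=0$ case using the characterization of cleanness via properness of the projection $\pi: X_0 \to |\X|$ provided by Lemma~\ref{lm:cpp}.

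For $i=0$: by Lemma~\ref{lm:cpp}, I need to show that $\pi_\Y|_{F_0(C_0)} : F_0(C_0)\to |\Y|$ is proper. The relevant diagram is
\[
\xymatrix{
C_0 \ar[r]^{F_0|_{C_0}} \ar[d]^{\pi_\X|_{C_0}} & Y_0 \ar[d]^{\pi_\Y} \\
|\X| \ar[r]^{|F|} & |\Y|.
}
\]
Since $C_0$ is clean, Lemma~\ref{lm:cpp} makes $\pi_\X|_{C_0}$ proper; since $F$ is proper, $|F|$ is proper; hence the composition $\pi_\Y\circ F_0|_{C_0}=|F|\circ \pi_\X|_{C_0}$ is proper. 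Now, given compact $K\subset |\Y|$, a quick set-theoretic check shows
\[
F_0(C_0)\cap \pi_\Y^{-1}(K)=F_0\bigl(C_0\cap (\pi_\Y\circ F_0)^{-1}(K)\bigr),
\]
and the right hand side is the continuous image of a compact set, hence compact. Thus $\pi_\Y|_{F_0(C_0)}$ is proper and $F_0(C_0)$ is clean.

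For $i=1$: by the functoriality of $F$, the identities $s_\Y\circ F_1=F_0\circ s_\X$ and $t_\Y\circ F_1=F_0\circ t_\X$ hold, so
\[
s_\Y(F_1(C_1))=F_0(s_\X(C_1)),\qquad t_\Y(F_1(C_1))=F_0(t_\X(C_1)).
\]
Since $C_1\subset X_1$ is clean, by definition $s_\X(C_1)$ and $t_\X(C_1)$ are clean subsets of $X_0$, and the $i=0$ case yields that $F_0(s_\X(C_1))$ and $F_0(t_\X(C_1))$ are clean in $Y_0$. This is exactly the definition of $F_1(C_1)$ being clean.

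There is really no deep obstacle here; the only subtlety is to avoid confusing the $i=0$ characterization (in terms of the single map $\pi_\X$) with the $i=1$ definition (in terms of $s$ and $t$), and to remember that $F$ being proper refers to $|F|$ rather than $F_0$ directly, so the argument is naturally routed through the quotient spaces $|\X|$ and $|\Y|$.
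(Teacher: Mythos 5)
Your proof is correct and follows essentially the same route as the paper: the $i=0$ case is handled via Lemma~\ref{lm:cpp} and the commutative square relating $F_0|_{C_0}$, $\pi$, and $|F|$, and the $i=1$ case is reduced to $i=0$ using $s\circ F_1 = F_0\circ s$ and $t\circ F_1 = F_0\circ t$. Your explicit identity exhibiting $\bigl(\pi|_{F_0(C_0)}\bigr)^{-1}(K)$ as the continuous image of a compact set is a slightly cleaner way to finish than the paper's appeal to a closed set contained in a compact set, but the substance is identical.
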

\begin{proof}
First we prove that $F_0(C_0)\subset Y_0$ is clean. Indeed, by Lemma~\ref{lm:cpp} it suffices to prove the map $\pi|_{F_0(C_0)} : F_0(C_0) \to |\Y|$ is proper. Since $|\Y|$ is Hausdorff, a compact subset $K \subset |\Y|$ is closed. So, it suffices to prove that the closed set $\left(\pi|_{F_0(C_0)}\right)^{-1}(K)\subset F_0(C_0)$ is contained in a compact set. This follows from the properness of $|F|,$ Lemma~\ref{lm:cpp}, and the commutativity of the following diagram:
\[
\xymatrix{
C_0 \ar[r]^(.42){F_0|_{C_0}} \ar[d]^{\pi|_{C_0}} & F_0(C_0) \ar[d]^{\pi|_{F_0(C_0)}} \\
|\X| \ar[r]^{|F|} & |\Y|.
}
\]
To prove that $F_1(C_1)\subset Y_1$ is clean, we need to show that $F_1(C_1)$ is closed and $s(F_1(C_1))$ and $t(F_1(C_1))$ are clean. Indeed, since $F_1|_{C_1}$ is proper by Lemma~\ref{lm:proper}, it follows from Lemma~\ref{lm:seqprop} that $F_1(C_1)$ is closed. Furthermore,
$s(F_1(C_1))= F_0(s(C_1)),$ which is clean by what we have already proved, and the same argument works for $t(F_1(C_1)).$
\end{proof}

\begin{proof}[Proof of Lemma~\ref{lm:pullpush}]
To prove part~(\ref{item:pull}), observe that for $\alpha \in A^*(Y_0)$ we have
\[
s^*F_0^*\alpha - t^*F_0^*\alpha = F_1^*(s^*\alpha - t^*\alpha) = 0.
\]
We turn to the proof of part~(\ref{item:push}). By Lemma~\ref{lm:proper} the maps $F_i : X_i \to Y_i$ are proper when restricted to clean subsets, so they give rise to push-forward maps on differential forms with clean support. Lemmas~\ref{lm:pcc} and~\ref{lm:clsc} imply that if $\alpha \in \Acl^*(X_i),$ then the push-forward $(F_i)_*\alpha$ has clean support and so belongs to $\Acl^*(Y_i).$ Finally, recalling the definition~\eqref{eq:relor} of a relative orientation for $F$, by Proposition~\ref{prop:proppp}~\eqref{prop:pushcomp} for $\alpha \in \Acl^*(X_1)$ we have
\[
(F_0)_*(s_*\alpha - t_*\alpha) = s_*(F_1)_*\alpha - t_*(F_1)_*\alpha.
\]
So, $(F_0)_*$ carries $\Im(s_* - t_*)$ to $\Im(s_*-t_*)$ as desired.
\end{proof}

For the next two lemmas, let $F,G: \X \to \Y$ be morphisms in $\EPG{}$ and let $\alpha :F \Rightarrow G$ be a $2$-morphism.
\begin{lm}\label{lm:2morprop}
Suppose $F$ is proper. If $C \subset X_0$ is clean, then $\alpha|_C : C \to Y_1$ is proper.
\end{lm}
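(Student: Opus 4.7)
The plan is to characterize clean subsets via properness of the projection to the orbit space, using Lemma~\ref{lm:cpp}, and then exploit the natural transformation identity $s \circ \alpha = F_0$ to reduce properness of $\alpha|_C$ to properness of a composition of maps whose properness we already know.

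More precisely, since $C$ is clean, Lemma~\ref{lm:cpp} gives that $\pi|_C : C \to |\X|$ is proper, and properness of $F$ means by definition that $|F| : |\X| \to |\Y|$ is proper. Therefore the composition $|F| \circ \pi|_C : C \to |\Y|$ is proper. Now fix a compact $K \subset Y_1$. Since $s : Y_1 \to Y_0$ and $\pi : Y_0 \to |\Y|$ are continuous, the set $\pi(s(K)) \subset |\Y|$ is compact, so
\[
L := (|F| \circ \pi|_C)^{-1}(\pi(s(K))) \subset C
\]
is compact.

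The crucial observation is that $(\alpha|_C)^{-1}(K) \subset L$. Indeed, if $x \in C$ satisfies $\alpha(x) \in K$, then by the natural transformation identity $s \circ \alpha = F_0$, we get $F_0(x) = s(\alpha(x)) \in s(K)$, and then $|F|(\pi(x)) = \pi(F_0(x)) \in \pi(s(K))$, so $x \in L$. Moreover, $(\alpha|_C)^{-1}(K)$ is closed in $C$, since $\alpha$ is continuous and $K$ is closed in $Y_1$ (being a compact subset of the Hausdorff space $Y_1$). Thus $(\alpha|_C)^{-1}(K)$ is a closed subset of the compact set $L$, hence compact, which proves that $\alpha|_C$ is proper.

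There is no real obstacle here; the argument is a routine chase through the definitions once Lemma~\ref{lm:cpp} is in hand. The only point that requires noting is that we use the Hausdorff property of $Y_1$ (manifold with corners) to conclude $K$ is closed, and the compatibility $\pi \circ F_0 = |F| \circ \pi$ which is part of the definition of $|F|$.
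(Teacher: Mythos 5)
Your proof is correct and follows essentially the same route as the paper: both use the identity $s\circ\alpha = F_0$ to trap $\alpha^{-1}(K)\cap C$ inside a compact set coming from properness of $F$ over the clean set $C$, and then conclude by closedness. The only difference is that the paper cites Lemma~\ref{lm:proper} directly for properness of $F_0|_{C}$, whereas you re-derive the needed compactness from Lemma~\ref{lm:cpp} and properness of $|F|$ --- which is exactly how Lemma~\ref{lm:proper} is proved, so you have merely inlined that lemma.
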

\begin{proof}
Let $K \subset Y_1$ be compact. We show that $\alpha^{-1}(K) \cap C$ is compact. Indeed, $s(K)$ is compact as the continuous image of a compact set, so Lemma~\ref{lm:proper} implies that $F_0^{-1}(s(K))\cap C$ is compact. Since $s\circ \alpha = F_0,$ it follows that $\alpha^{-1}(K) \subset F_0^{-1}(s(K)).$ Thus, keeping in mind Lemma~\ref{lm:cc}, we see that $\alpha^{-1}(K)\cap C$ is compact as a closed subset of a compact set.
\end{proof}
\begin{lm}\label{lm:2morcl}
Suppose $F$ and $G$ are proper. If $C \subset X_0$ is clean, then $\alpha(C) \subset Y_1$ is clean.
\end{lm}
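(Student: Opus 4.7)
The plan is to unwind the definition of cleanness for subsets of $Y_1$ and reduce the claim to Lemma~\ref{lm:pcc} via the naturality identities. By definition, $\alpha(C) \subset Y_1$ is clean precisely when both $s(\alpha(C))$ and $t(\alpha(C))$ are clean subsets of $Y_0$. Since $\alpha : F \Rightarrow G$ is a natural transformation whose underlying map $\alpha : X_0 \to Y_1$ satisfies $s \circ \alpha = F_0$ and $t \circ \alpha = G_0$, we have the equalities
\[
s(\alpha(C)) = F_0(C), \qquad t(\alpha(C)) = G_0(C).
\]

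Now $C \subset X_0$ is clean by hypothesis. Since $F$ is a proper morphism in \EPG{}, Lemma~\ref{lm:pcc} applies (with $C_0 = C$) and yields that $F_0(C)$ is clean in $Y_0$. Similarly, properness of $G$ and Lemma~\ref{lm:pcc} yield that $G_0(C)$ is clean in $Y_0$. Combining, both $s(\alpha(C))$ and $t(\alpha(C))$ are clean, so $\alpha(C)$ is clean by the very definition of cleanness for subsets of the morphism space.

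There is no real obstacle here; the argument is a one-step reduction to Lemma~\ref{lm:pcc}, made possible by the natural transformation identities $s \circ \alpha = F_0$ and $t \circ \alpha = G_0$ (which were recorded when relative orientations of $2$-morphisms were discussed, cf.\ equations~\eqref{eq:arelor}). The only mild subtlety worth flagging is that the properness hypothesis is needed for \emph{both} $F$ and $G$, since each of the two pieces $s(\alpha(C))$ and $t(\alpha(C))$ requires a separate application of Lemma~\ref{lm:pcc}.
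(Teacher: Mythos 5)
Your proposal is correct and is essentially identical to the paper's own proof: both use the identities $s\circ\alpha = F_0$ and $t\circ\alpha = G_0$ to reduce the claim to two applications of Lemma~\ref{lm:pcc}, one for $F$ and one for $G$. Nothing further is needed.
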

\begin{proof}
Since $C$ is closed by definition, and $\alpha|_C$ is proper by Lemma~\ref{lm:2morprop}, it follows by Lemma~\ref{lm:seqprop} that $\alpha(C)$ is closed. Furthermore, we have $s(\alpha(C)) = F_0(C)$ and $t(\alpha(C)) = G_0(C).$ So the claim follows from Lemma~\ref{lm:pcc}.
\end{proof}

\begin{proof}[Proof of Lemma~\ref{lm:2mor}]
We have $\alpha : X_0 \to Y_1$ with $s\circ \alpha = F_0$ and $t \circ \alpha = G_0.$ So, if $\eta \in A^*(\Y),$ then
\[
F_0^*\eta - G_0^*\eta = (s \circ \alpha)^*\eta - (t\circ \alpha)^*\eta = \alpha^*(s^*\eta - t^*\eta) = 0.
\]
On the other hand, by Lemma~\ref{lm:2morprop}, the map $\alpha : X_0 \to Y_1$ is proper when restricted to a clean subset, so it gives rise to a push-forward map on differential forms with clean support. Lemmas~\ref{lm:2morcl} and~\ref{lm:clsc} imply that if $\xi \in \Acl^*(X_0),$ then the push-forward $\alpha_*\xi$ has clean support and so belongs to $\Acl^*(Y_1).$
Finally, recalling the definition~\eqref{eq:arelor} of a relative orientation for $\alpha$, by Proposition~\ref{prop:proppp}~\eqref{prop:pushcomp} if $\xi \in \Acl^*(X_0),$ then
\[
(F_0)_*\xi - (G_0)_*\xi = (s\circ \alpha)_*\xi - (t\circ \alpha)_*\xi = (s_* - t_*)\alpha_*\xi.
\]
So,
\[
F_*[\xi] - G_*[\xi] = [(s_*-t_*)\alpha_*\xi] = 0.
\]
\end{proof}

\subsubsection{Clean subsets and groupoid composition}
\begin{lm}\label{lm:csfp}
Let $Y,Z,W,$ be topological spaces with $W$ Hausdorff, and let $p_1: Y \times_W Z \to Y$ and $p_2 : Y \times_W Z \to Z$ denote the projections. If $A \subset Y \times_W Z$ is closed and there exist compact subsets $B_1 \subset Y$ and $B_2 \subset Z$ such that $p_i(A) \subset B_i$ for $i = 1,2,$ then $A$ is compact.
\end{lm}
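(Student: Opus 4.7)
The plan is to reduce the statement to the standard fact that a closed subset of a compact space is compact, using the Hausdorff hypothesis on $W$ to turn the fiber product into a subspace of an ordinary product.

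First I would observe that the fiber product $Y \times_W Z$ is defined with respect to some continuous structure maps $f : Y \to W$ and $g : Z \to W$, and as a set is
\[
Y \times_W Z = \{(y,z) \in Y \times Z : f(y) = g(z)\} = (f \times g)^{-1}(\Delta_W),
\]
where $\Delta_W \subset W \times W$ is the diagonal. Since $W$ is Hausdorff, $\Delta_W$ is closed in $W \times W$, so $Y \times_W Z$ is a closed subspace of $Y \times Z$ equipped with the subspace topology inherited from the product.

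Next, because $A$ is closed in $Y \times_W Z$ and $Y \times_W Z$ is closed in $Y \times Z$, the set $A$ is closed in $Y \times Z$. The hypotheses $p_1(A) \subset B_1$ and $p_2(A) \subset B_2$ give the inclusion $A \subset B_1 \times B_2$, and the product $B_1 \times B_2$ is compact as a finite product of compact spaces. Hence $A = A \cap (B_1 \times B_2)$ is closed in the compact Hausdorff space $B_1 \times B_2$, and therefore compact.

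There is no substantive obstacle here; the only point requiring a moment of care is invoking the Hausdorffness of $W$ to conclude that $Y \times_W Z$ inherits closedness from the diagonal, after which the argument is a standard application of the fact that closed subsets of compact spaces are compact.
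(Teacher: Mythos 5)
Your proof is correct and follows essentially the same route as the paper: use Hausdorffness of $W$ to see that $Y \times_W Z$ is closed in $Y \times Z$, deduce that $A$ is closed in $Y \times Z$, and conclude by observing that $A$ sits inside the compact set $B_1 \times B_2$. One small caveat: $B_1 \times B_2$ need not be Hausdorff (since $Y$ and $Z$ are not assumed Hausdorff), but this is harmless because a closed subset of a compact space is compact without any separation hypothesis.
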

\begin{proof}
Since $W$ is Hausdorff, $Y \times_W Z \subset Y \times Z$ is closed. So, $A$ is closed in $Y \times Z.$ Since $A$ is contained in the compact set $B_1 \times B_2$ and $A$ is closed, it follows that $A$ is compact.
\end{proof}
The following lemma is familiar from algebraic geometry. We provide a proof in the purely topological context for the reader's convenience.
\begin{lm}\label{lm:bcp}
Consider the following fiber square.
\[
\xymatrix{
Y \times_W Z \ar[r]^(.6){p_2}\ar[d]^{p_1} & Z \ar[d]^{g} \\
Y \ar[r]^f & W
}
\]
Suppose $Z$ and $W$ are Hausdorff. If $f$ is proper, then $p_2$ is proper.
\end{lm}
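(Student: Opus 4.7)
The plan is to take an arbitrary compact subset $K \subset Z$ and show that $p_2^{-1}(K) \subset Y \times_W Z$ is compact, using Lemma~\ref{lm:csfp} to reduce compactness to being closed and contained in a product of compact sets.

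First I would observe that since $Z$ is Hausdorff, $K$ is closed in $Z$, so $p_2^{-1}(K)$ is closed in $Y \times_W Z$ by continuity of $p_2$. Next, I would bound the images of $p_2^{-1}(K)$ under the two projections. Trivially $p_2(p_2^{-1}(K)) \subset K$, which is compact. For the other projection, the key observation is that if $(y,z) \in p_2^{-1}(K)$, then $f(y) = g(z) \in g(K)$, so $p_1(p_2^{-1}(K)) \subset f^{-1}(g(K))$. Since $g$ is continuous, $g(K)$ is compact, and since $f$ is proper, $f^{-1}(g(K))$ is compact. Thus both projections of $p_2^{-1}(K)$ land in compact subsets of $Y$ and $Z$ respectively.

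With the two ingredients in hand, I would invoke Lemma~\ref{lm:csfp}, which requires exactly that $W$ be Hausdorff (so that $Y \times_W Z$ is closed in $Y \times Z$ and hence $p_2^{-1}(K)$ is closed in $Y \times Z$) together with the compact bounds on the projections. This yields the compactness of $p_2^{-1}(K)$ and completes the proof that $p_2$ is proper.

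I do not anticipate any serious obstacle here; the argument is a routine base-change verification in point-set topology, and all the hypotheses of Lemma~\ref{lm:csfp} are satisfied almost tautologically. The only subtle point worth flagging is the role of the Hausdorff hypothesis on $W$ (needed to identify $Y \times_W Z$ as a closed subspace of $Y \times Z$ so that Lemma~\ref{lm:csfp} applies) and on $Z$ (needed to know $K$ is closed, hence $p_2^{-1}(K)$ is closed in $Y \times_W Z$).
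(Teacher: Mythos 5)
Your proposal is correct and follows essentially the same route as the paper: reduce to Lemma~\ref{lm:csfp} by checking that $p_2^{-1}(K)$ is closed and that its two projections land in the compact sets $K$ and $f^{-1}(g(K))$. The only cosmetic difference is that you state containments where the paper states equalities, which is all that Lemma~\ref{lm:csfp} requires anyway.
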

\begin{proof}
Let $K \subset Z$ be compact. We prove that $p_2^{-1}(K)$ is compact. By Lemma~\ref{lm:csfp}, it suffices to show that $p_2^{-1}(K)$ is closed and $p_i(p_2^{-1}(K))$ is compact for $i = 1,2.$ Indeed, since $Z$ is Hausdorff, $K$ is closed, and hence $p_2^{-1}(K)$ is closed. Clearly $p_2(p_2^{-1}(K)) = K$ is compact. Finally, $p_1(p_2^{-1}(K)) = f^{-1}(g(K))$ is compact since $f$ is proper.
\end{proof}

\begin{lm}\label{lm:preJwd}
Consider the fiber square
\[
\xymatrix{
X_1 \times_{X_0} X_1 \ar[r]^(.6){p_2}\ar[d]^{p_1} & X_1 \ar[d]^t \\
X_1 \ar[r]^s & X_0
}
\]
as well as the composition map $m : X_1 \times_{X_0} X_1 \to X_1.$ Let $C \subset X_1$ be clean. Then,
\begin{enumerate}
\item\label{item:p2|}
$p_2|_{p_1^{-1}(C)}$ is proper;
\item\label{item:m|}
$m|_{p_1^{-1}(C)}$ is proper;
\item\label{item:s|m}
$s|_{m(p_1^{-1}(C))}$ is proper.
\end{enumerate}
\end{lm}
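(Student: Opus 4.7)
The plan is to reduce each of the three claims to the base-change Lemma~\ref{lm:bcp}, applied to a proper map extracted from the cleanness of $C$ via Lemma~\ref{lm:sgc}. For part~\eqref{item:p2|}, I would observe that $p_1^{-1}(C)$ is naturally identified with the fiber product $C \prescript{}{s|_C}\times_t X_1$, so the given fiber square restricts to
\[
\xymatrix{
p_1^{-1}(C)\ar[r]^{p_2|}\ar[d]^{p_1|} & X_1 \ar[d]^t \\
C \ar[r]^{s|_C} & X_0.
}
\]
Since $C$ is clean, Lemma~\ref{lm:sgc} gives that $s|_C$ is proper, and all spaces being Hausdorff, Lemma~\ref{lm:bcp} delivers the claim directly.

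For part~\eqref{item:m|}, the trick is to rewrite $m|_{p_1^{-1}(C)}$ as a projection of a suitable fiber product. I would use the local diffeomorphisms $m$ and $i$ to define a diffeomorphism
\[
\Phi : X_1 \prescript{}{s}\times_t X_1 \overset{\sim}{\lrarr} X_1 \prescript{}{t}\times_t X_1, \qquad \Phi(a,b) = (a, m(a,b)),
\]
with inverse $(a, h) \mapsto (a, m(i(a), h))$; both directions are built from the structure maps and so are smooth. Restricted to $p_1^{-1}(C)$, this $\Phi$ is a diffeomorphism onto $C \prescript{}{t|_C}\times_t X_1$, and under it $m|_{p_1^{-1}(C)}$ corresponds to the second-factor projection. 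A second application of Lemma~\ref{lm:bcp} to
\[
\xymatrix{
C \prescript{}{t|_C}\times_t X_1 \ar[r] \ar[d] & X_1 \ar[d]^t \\
C \ar[r]^{t|_C} & X_0,
}
\]
using that $t|_C$ is proper by Lemma~\ref{lm:sgc}, yields the properness of this projection, hence of $m|_{p_1^{-1}(C)}$.

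For part~\eqref{item:s|m}, I would first prove the identity $m(p_1^{-1}(C)) = t^{-1}(t(C))$. The inclusion $\subseteq$ follows from $t\circ m = t\circ p_1$; for $\supseteq$, given $h$ with $t(h) \in t(C)$, choose $a \in C$ with $t(a) = t(h)$ and set $b = m(i(a), h)$, so that $(a,b) \in p_1^{-1}(C)$ and $m(a,b) = h$. Since $C \subset X_1$ is clean, by definition $t(C) \subset X_0$ is clean, and so $s|_{t^{-1}(t(C))}$ is proper, which is exactly the claim. The main subtlety lies in part~\eqref{item:m|}: unlike $p_2$, the composition $m$ is not itself a base-change map, and one must invoke the groupoid inverse $i$ to trade $m$ for the second projection of a different fiber product, after which the same base-change argument applies.
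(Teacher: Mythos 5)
Your proof is correct, and while part~\eqref{item:p2|} coincides with the paper's (both are a direct application of Lemma~\ref{lm:bcp} after invoking Lemma~\ref{lm:sgc}), your treatment of parts~\eqref{item:m|} and~\eqref{item:s|m} takes a genuinely different route. For part~\eqref{item:m|} the paper argues by hand: it fixes a compact $K \subset X_1$ and shows $m^{-1}(K)\cap p_1^{-1}(C)$ is compact via Lemma~\ref{lm:csfp}, bounding the two projections of this set inside $(t|_C)^{-1}(t(K))$ and $(s|_{t^{-1}(s(C))})^{-1}(s(K))$ respectively, and using Lemma~\ref{lm:cc1} for closedness. Your device of using the groupoid inverse to build the diffeomorphism $\Phi(a,b)=(a,m(a,b))$, which turns $m|_{p_1^{-1}(C)}$ into the base-change of $t|_C$ along $t$, lets you reuse Lemma~\ref{lm:bcp} and avoids the explicit compactness bookkeeping; the only point worth stating is that properness transfers across the homeomorphism $\Phi|_{p_1^{-1}(C)}$, which is immediate. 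For part~\eqref{item:s|m} the paper only establishes the inclusion $m(p_1^{-1}(C))\subset t^{-1}(t(C))$ and must then invoke part~\eqref{item:m|} to know that $m(p_1^{-1}(C))$ is closed before restricting the proper map $s|_{t^{-1}(t(C))}$; your exact identity $m(p_1^{-1}(C))=t^{-1}(t(C))$, proved with the same inverse trick, makes the conclusion immediate and independent of part~\eqref{item:m|}. Both approaches are sound; yours trades the paper's elementary point-set estimates for a structural use of the groupoid axioms, which is arguably cleaner, while the paper's argument uses only the topological lemmas already in place and does not require checking that $\Phi$ is a well-defined homeomorphism.
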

\begin{proof}
Keeping in mind Lemma~\ref{lm:sgc}, part~\eqref{item:p2|} is a special instance of Lemma~\ref{lm:bcp}. To prove part~\eqref{item:m|}, we show that for any compact $K \subset X_1,$ the preimage
\[
(m|_{p_1^{-1}(C)})^{-1}(K) = m^{-1}(K) \cap p_1^{-1}(C)
\]
is compact. Consider the following commutative diagram.
\[
\xymatrix{
& X_1 \times_{X_0} X_1 \ar[ld]_{p_1}\ar[rrrr]^m\ar[rdd]^(.4){p_2}
\save[]-<2cm,-.04cm>*{p_1^{-1}(C)\subset} \restore
&&&& X_1 \ar[lld]_t\ar[rrdd]^s
\save[]+<.75cm,.05cm>*{\supset K} \restore
 \\
X_1 \ar[rrr]|(.56)\hole^(.42)t \ar[rdd]^(.4)s
\save[]-<.75cm,-.03cm>*{C \subset} \restore
&&& X_0
\\
&& X_1 \ar[ld]_(.6)t \ar[rrrrr]^s
\save[]-<1.5cm,-.05cm>*{t^{-1}(s(C))\subset } \restore
&&&&& X_0 \\
&X_0
}
\]
By Lemma~\ref{lm:csfp}, it suffices to show $m^{-1}(K)\cap p_1^{-1}(C)$ is closed and $p_i(m^{-1}(K)\cap p_1^{-1}(C))$ is contained in a compact set for $i = 1,2.$
On the one hand, the restriction $t|_C$ is proper by Lemma~\ref{lm:sgc}. So, $(t|_C)^{-1}(t(K))$ is a compact set, and it contains $p_1(m^{-1}(K)\cap p_1^{-1}(C))$ by the commutativity of the diagram.
On the other hand, since $C$ is clean, so is $s(C)$, and consequently $s|_{t^{-1}(s(C))}$ is proper. Observe also that $t^{-1}(s(C)) = p_2(p_1^{-1}(C)).$ So, $(s|_{t^{-1}(s(C))})^{-1}(s(K))$ is a compact set and it contains $p_2(m^{-1}(K)\cap p_1^{-1}(C))$ by the commutativity of the diagram. Finally, since $C$ is closed by definition, it follows that $m^{-1}(K)\cap p_1^{-1}(C)$ is closed.

We now prove part~\eqref{item:s|m}. Since $m|_{p_1^{-1}(C)}$ is proper, $m(p_1^{-1}(C))$ is closed. Furthermore, since $C$ is clean, so is $t(C),$ and consequently $s|_{t^{-1}(t(C))}$ is proper. By commutativity of the diagram, $m(p_1^{-1}(C)) \subset t^{-1}(t(C)),$ so $s|_{m(p_1^{-1}(C))}$ is proper as the restriction of a proper map to a closed subset.
\end{proof}

\subsection{Comparing differential forms and cleanly supported differential forms} \label{ssec:compare}
Recall that the push-forward of differential forms by any of the structure maps of an \'etale proper groupoid with corners, which are by definition local diffeomorphisms, is always taken with respect to the canonical relative orientation as defined in Section~\ref{ssec:orientations}.

\begin{proof}[Proof of Lemma~\ref{lm:Jwd}]
To prove part~\eqref{item:tst}, consider the following diagram.
\[
\xymatrix{
& X_1 \ar[dl]_s \ar@<3pt>[dd]^i \ar[dr]^t \\
X_0 && X_0 \\
&X_1 \ar[ul]^t \ar@<3pt>[uu]^i \ar[ur]_s
}
\]
By Lemma~\ref{lm:pp}, since $i \circ i = \Id,$ we have $i^* = i_*.$ Thus, by Proposition~\ref{prop:proppp}~\eqref{prop:pushcomp} for $\alpha \in \Acl^*(X_0)$ we have
\[
t_*s^*\alpha = t_* i^* t^*\alpha = t_* i_* t^*\alpha = s_* t^*\alpha.
\]
Next, we prove part~\eqref{item:kernelJ}. Suppose $\eta \in \Acl^*(X_1)$ and $\alpha = s_*\eta - t_*\eta.$ The following diagram commutes.
\[
\xymatrix{
&X_1 \times_{X_0} X_1 \ar[dl]_{p_1}\ar[dr]^{p_2}\ar[rr]^m && X_1 \ar[dd]^s \\
X_1\ar[dr]^s && X_1 \ar[dl]_t \ar[dr]^s \\
& X_0 && X_0
}
\]
We equip $p_2$ with the canonical relative orientation of a local diffeomorphism, which coincides with pull-back orientation $s^*o^t_c$ by Lemma~\ref{lm:pb&c}.
So, keeping in mind Lemma~\ref{lm:preJwd}, Proposition~\ref{prop:proppp} implies that
\begin{equation}\label{eq:sts}
s_*t^*s_* \eta = s_*m_*p_1^*\eta.
\end{equation}
On the other hand, consider the local diffeomorphism
\[
q : X_1 \times_{X_0} X_1 \to X_1 \times_{X_0} X_1
\]
given by $q(x,y) = (i(x),m(x,y)).$ Equip $q$ with the canonical relative orientation. Observe that $q\circ q = \Id,$ so $q$ is in  fact a diffeomorphism and Lemma~\ref{lm:pp} gives $q^* = q_*.$ The following diagram commutes.
\[
\xymatrix{
X_1\ar@<3pt>[dd]^i && X_1 \times_{X_0} X_1 \ar@<3pt>[dd]^{q}\ar[ll]_{p_1} \ar[rr]^(.6)m && X_1 \ar[dr]^s \\
 &&&&& X_0 \\
X_1 \ar@<3pt>[uu]^i &&X_1 \times_{X_0} X_1 \ar@<3pt>[uu]^q \ar[ll]_{p_1}\ar[rr]^(.6)m && X_1\ar[ur]_s
}
\]
So, Proposition~\ref{prop:proppp}~\eqref{prop:pushcomp} gives
\begin{equation}\label{eq:smpi}
s_*m_*p_1^*i_*\eta = s_*m_*p_1^*i^*\eta = s_*m_*q^*p_1^*\eta = s_* m_* q_* p_1^*\eta = s_* m_*p_1^*\eta.
\end{equation}
Combining equations~\eqref{eq:sts} and~\eqref{eq:smpi}, we conclude that
\[
s_*t^* \alpha = s_*t^*s_* \eta - s_*t^*t_* \eta = s_* t^* s_* \eta - s_* t^* s_* i_*\eta = s_*m_*p_1^*\eta - s_* m_* p_1^*i_*\eta = 0,
\]
as desired.

The proof of part~\eqref{item:imageJ} is similar. Equipping $p_1$ with the canonical relative orientation of a local diffeomorphism, we derive from the above diagrams that
\[
s^*t_*s^*\alpha = (p_1)_* m^*s^*\alpha,  \qquad (p_1)_* m^*s^*\alpha = i^*(p_1)_* m^*s^*\alpha.
\]
So, it follows from part~\eqref{item:tst} that
\begin{multline*}
s^*s_*t^*\alpha - t^*s_*t^*\alpha = s^*t_*s^*\alpha - t^*t_*s^*\alpha =\\
 =s^*t_*s^*\alpha - i^*s^*t_*s^*\alpha = (p_1)_* m^*s^*\alpha - i^*(p_1)_* m^*s^*\alpha = 0.
\end{multline*}
\end{proof}

\begin{lm}\label{lm:FJF = J}
Let $F : \X \to \Y$ be a proper local diffeomorphism in \EPG{} that is fully faithful as a functor of the underlying groupoids. Equipping $F$ with the canonical relative orientation, the following diagram commutes.
\[
\xymatrix{
A^*(\X) & A^*(\Y) \ar[l]_(.45){F^*} \\
\Acl^*(\X)\ar[u]^J\ar[r]^{F_*} & \Acl^*(\Y) \ar[u]^{J}
}
\]
\end{lm}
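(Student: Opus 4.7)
My strategy is to reduce the statement $J\circ F_* = F^*\circ J$ to the pointwise identity
\[
F_0^*\, t^\Y_*\, s_\Y^*\, (F_0)_*\,\alpha = t^\X_*\, s_\X^*\,\alpha
\]
for $\alpha\in\Acl^*(X_0),$ and then to establish this identity by expanding the left-hand side using Proposition~\ref{prop:proppp}~\eqref{prop:pushfiberprod} (push-pull) twice, in order to transfer the expression onto an iterated fiber product which, by the fully-faithfulness hypothesis, is canonically diffeomorphic to $X_1.$

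Concretely, the first application of push-pull is to the fiber square
\[
\xymatrix{
Y_1\,\prescript{}{s_\Y}\times_{F_0}^{}\, X_0 \ar[rr]^(.7){\pi_2}\ar[d]_{\pi_1} && X_0\ar[d]^{F_0} \\
Y_1\ar[rr]^{s_\Y} && Y_0
}
\]
giving $s_\Y^*(F_0)_*\alpha = (\pi_1)_*\pi_2^*\alpha.$ Composing with $t^\Y_*$ via Proposition~\ref{prop:proppp}~\eqref{prop:pushcomp} and applying push-pull a second time to
\[
\xymatrix{
V \ar[rr]^(.3){p}\ar[d]_{q} && Y_1\,\prescript{}{s_\Y}\times_{F_0}^{}\, X_0 \ar[d]^{t^\Y\circ \pi_1} \\
X_0 \ar[rr]^{F_0} && Y_0
}
\]
where
\[
V = \{(x_0, y_1, x_0')\in X_0 \times Y_1 \times X_0 : F_0(x_0)=t^\Y(y_1),\; s_\Y(y_1)=F_0(x_0')\},
\]
yields
\[
F_0^*\, t^\Y_*\, s_\Y^*\, (F_0)_*\,\alpha = q_*\,(\pi_2\circ p)^*\alpha.
\]
Every map appearing is a local diffeomorphism, so by Lemma~\ref{lm:pb&c} the pull-back orientations on $\pi_1$ and $q$ coincide with their canonical orientations, and push-pull applies with the canonical-orientation push-forwards used throughout.

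The key observation is that fully-faithfulness of $F$ produces a canonical diffeomorphism $\phi : X_1 \to V$ given by $\phi(x_1) = (t_\X(x_1), F_1(x_1), s_\X(x_1)).$ Injectivity of $d\phi$ follows from $s_\X$ being a local diffeomorphism; the dimensions of $X_1$ and $V$ coincide, so $\phi$ is itself a local diffeomorphism; and it is bijective because fully-faithfulness says each morphism $y_1 : F_0(x_0')\to F_0(x_0)$ in $\Y$ lifts uniquely to a morphism $x_1 : x_0' \to x_0$ in $\X$ with $F_1(x_1) = y_1.$ Under $\phi$ one has $q\circ\phi = t_\X$ and $(\pi_2 \circ p)\circ\phi = s_\X,$ and Lemma~\ref{lm:pp} (whose proof extends verbatim to a diffeomorphism between distinct manifolds with corners) gives $(\phi^{-1})^* = \phi_*.$ Combining these facts with Proposition~\ref{prop:proppp}~\eqref{prop:pushcomp} transforms
\[
q_*(\pi_2\circ p)^*\alpha = q_*(\phi^{-1})^* s_\X^*\alpha = q_*\phi_* s_\X^*\alpha = (q\circ\phi)_* s_\X^*\alpha = t^\X_* s_\X^*\alpha = J[\alpha],
\]
completing the argument.

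The main technical hurdle I anticipate is verifying that the clean support of $\alpha$ makes all intermediate push-forwards well defined, since none of $F_0,$ $\pi_1,$ $t^\Y\pi_1,$ or $q$ is proper globally. One must check that the relevant restrictions $F_0|_{C},$ $\pi_1|_{\pi_2^{-1}(C)},$ and $q|_{(\pi_2\circ p)^{-1}(C)}$ are proper for $C = \supp\alpha,$ which reduces via Lemmas~\ref{lm:sgc}, \ref{lm:proper}, and~\ref{lm:pcc} to the cleanness of $C$ and the properness of $F.$ Orientation bookkeeping is straightforward since all the maps involved are local diffeomorphisms and the canonical orientations of composable local diffeomorphisms compose to the canonical orientation of the composition.
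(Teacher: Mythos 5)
Your proof is correct and follows essentially the same route as the paper's: two applications of Proposition~\ref{prop:proppp}\eqref{prop:pushfiberprod} to build an iterated fiber product, which the fully-faithfulness hypothesis (expressed as the Cartesian square for $t\times s$) identifies with $X_1$, after which the composite maps become $s_\X$ and $t_\X$. The only difference is cosmetic --- you resolve $s_\Y^*(F_0)_*$ first and then $F_0^*(t_\Y)_*$, whereas the paper works in the mirrored order --- and your explicit attention to the clean-support and orientation bookkeeping is consistent with the lemmas the paper relies on implicitly.
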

\begin{proof}
By definition, $F$ is fully faithful if and only if the following diagram is Cartesian.
\[
\xymatrix{
X_1 \ar[r]^{F_1}\ar[d]^{t\times s} & Y_1 \ar[d]^{t\times s} \\
X_0 \times X_0 \ar[r]^{F_0 \times F_0} & Y_0 \times Y_0
}
\]
In other words, we have a canonical diffeomorphism
\[
X_1 \simeq (X_0 \times X_0)\times_{Y_0\times Y_0} Y_1.
\]
Furthermore, we have a canonical diffeomorphism
\[
(X_0 \times X_0)\times_{Y_0\times Y_0} Y_1 \simeq (X_0 \times_{Y_0} Y_1) \times_{Y_0} X_0.
\]
Thus, we obtain the following commutative diagram, in which both squares are Cartesian.
\[
\xymatrix{
& X_1 \ar@/_1pc/[ddl]_t \ar[d]^\wr \ar[drr]^s \\
& (X_0 \times_{Y_0} Y_1) \times_{Y_0} X_0 \ar[d]^{q_1} \ar[rr]^(.55){q_2} && X_0 \ar[d]^{F_0} \\
X_0 \ar[d]^{F_0}& X_0 \times_{Y_0} Y_1 \ar[l]^{p_1}\ar[d]_{p_2} \ar[rr]^{s\circ p_2} && Y_0 \\
Y_0 & Y_1 \ar[l]^t \ar[urr]^s
}
\]
All maps in the diagram are local diffeomorphisms, and we equip them with their canonical relative orientations. So, for $\alpha \in \Acl^*(X_0)$ Lemma~\ref{lm:pb&c}, Proposition~\ref{prop:proppp}, and Lemma~\ref{lm:pp}, give
\begin{equation*}
F_0^*t_*s^*(F_0)_*\alpha = (p_1)_* p_2^*s^*(F_0)_*\alpha = (p_1)_* (s \circ p_2)^*(F_0)_*\alpha
= (p_1)_*(q_1)_*q_2^*\alpha = t_* s^* \alpha,
\end{equation*}
as desired.
\end{proof}

\begin{lm}\label{lm:Frho}
Suppose $F : \X \to \Y$ is a refinement.
If $\rho$ is a partition of unity for $\X$, then $F_*\rho$ is a partition of unity for $\Y.$
\end{lm}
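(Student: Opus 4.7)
The plan is to verify the two properties required of $F_*\rho$ (viewed as $(F_0)_*\rho \in \Acl^0(Y_0)$) in order for it to be a partition of unity on $\Y$: first, that it has clean support on $Y_0$; second, that $t_*s^*(F_0)_*\rho = 1$. The clean support is the easier point. Since $F_0$ is a local diffeomorphism, $\supp((F_0)_*\rho) \subset F_0(\supp\rho)$. A refinement is proper, so Lemma~\ref{lm:pcc} shows $F_0(\supp\rho)$ is clean, and then Lemma~\ref{lm:clsc} implies that its closed subset $\supp((F_0)_*\rho)$ is also clean.

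The main content is the identity $t_*s^*(F_0)_*\rho = 1$. A refinement is a proper local diffeomorphism, and as an equivalence of categories it is automatically fully faithful, so $F$ itself satisfies the hypotheses of Lemma~\ref{lm:FJF = J}. Applying that lemma (and the defining property $t_*s^*\rho = 1$ of $\rho$) yields
\[
F_0^*\bigl(t_*s^*(F_0)_*\rho\bigr) \;=\; t_*s^*\rho \;=\; 1,
\]
which says that the function $t_*s^*(F_0)_*\rho$ is identically $1$ on the image $F_0(X_0) \subset Y_0$. By Lemma~\ref{lm:Jwd}~\eqref{item:imageJ} applied to $\Y$, the function $t_*s^*(F_0)_*\rho$ lies in $A^0(\Y)$, and is therefore invariant under isomorphisms in $\Y$, i.e.\ constant on orbits in $Y_0$.

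To finish, I would invoke essential surjectivity: since $F$ is an equivalence of categories, every $y \in Y_0$ admits some morphism $F_0(x) \to y$ in $Y_1$ for some $x \in X_0$. Combined with the orbit-invariance of $t_*s^*(F_0)_*\rho$, this upgrades the equality on $F_0(X_0)$ to the equality $t_*s^*(F_0)_*\rho = 1$ on all of $Y_0$, completing the verification. I expect the most delicate step to be recognizing that $F$ itself fulfills the hypotheses of Lemma~\ref{lm:FJF = J} and then combining this with essential surjectivity to globalize from $F_0(X_0)$ to $Y_0$; the clean-support argument and the bookkeeping around Lemma~\ref{lm:Jwd}~\eqref{item:imageJ} are comparatively routine.
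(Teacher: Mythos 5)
Your proof is correct and follows essentially the same route as the paper: apply Lemma~\ref{lm:FJF = J} to get $F_0^*\bigl(t_*s^*(F_0)_*\rho\bigr)=1$, then use essential surjectivity together with the invariance $s^*\alpha=t^*\alpha$ to upgrade the equality from $F_0(X_0)$ to all of $Y_0$. The only cosmetic difference is that the paper runs this last step through the fiber product $X_0\times_{Y_0}Y_1$ and the surjective local diffeomorphism $s\circ p_2$ (an argument valid for forms of any degree), whereas you argue pointwise on orbits, which is fine here since $\rho$ is a $0$-form.
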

\begin{proof}
The claim is equivalent to $J  F_*\rho = 1.$
It follows from Lemma~\ref{lm:FJF = J} that
\[
F^*  J F_*\rho = J \rho = 1.
\]
So, it suffices to show that if $\alpha \in A^*(\Y)$ satisfies $F^* \alpha = 1,$ then $\alpha = 1.$
Indeed, consider the following fiber product.
\[
\xymatrix{
X_0 \times_{Y_0} Y_1 \ar[r]^(.65){p_2}\ar[d]_{p_1} & Y_1 \ar[d]^t \\
X_0 \ar[r]^{F_0} & Y_0
}
\]
Since $F$ is an equivalence of categories, it is in particular essentially surjective, which means that the map
\[
X_0 \times_{Y_0} Y_1 \overset{s\circ p_2}\lrarr Y_0
\]
is surjective. Since $\alpha \in A^*(\Y),$ we have $s^*\alpha = t^*\alpha,$ so
\[
1 = p_1^* 1 = p_1^* F_0^*\alpha = p_2^* t^*\alpha = p_2^* s^*\alpha = (s\circ p_2)^*\alpha.
\]
Since $s \circ p_2$ is a surjective local diffeomorphism, the claim follows.
\end{proof}
\begin{proof}[Proof of Lemma~\ref{lm:pu}]
It is shown in~\cite[pp. 14-15]{Behrend} that there exists a refinement $F: \X'\to \X$ such that there exists a partition of unity $\rho'$ for $\X'.$ Lemma~\ref{lm:Frho} asserts that $\rho = F_*\rho'$ is a partition of unity for $\X.$
\end{proof}

\begin{proof}[Proof of Lemma~\ref{lm:JKinv}]
First we prove that $J\circ K = \Id.$ Indeed, for $\alpha \in A^*(\X),$ we have
\[
J \circ K(\alpha) = t_*s^*(\rho \alpha) = t_*(s^*\!\rho \,s^*\alpha) = t_*(t^*\alpha\, s^*\!\rho) = \alpha \,t_*s^*\!\rho = \alpha.
\]
Next, we prove that $J$ is injective. More specifically, suppose $\alpha \in \Acl(X_0)$ and $s_*t^*\alpha = 0.$ By Lemma~\ref{lm:pu} choose a partition of unity $\rho \in \Acl^*(X_0).$ We claim that $\alpha = (t_* - s_*)(s^*\!\rho \, t^*\alpha),$ so
$[\alpha] = 0 \in \Acl^*(\X) = \Acl^*(X_0)/\Im(s_*-t_*).$
Indeed,
\[
(t_* - s_*)(s^*\rho \, t^*\alpha) = t_*(t^*\alpha\, s^*\!\rho) - s_*(s^*\!\rho\, t^*\alpha)= \alpha t_*s^*\rho - \rho s_*t^*\alpha = \alpha \cdot 1 -\rho \cdot 0= \alpha.
\]
Since $J \circ K = \Id,$ it follows that $J$ is surjective. So, $J$ is invertible, and hence $K = J^{-1}.$
\end{proof}

\begin{lm}\label{lm:FKF = K}
If $F : \X \to \Y$ is a refinement, then the following diagram commutes.
\[
\xymatrix{
A^*(\X)\ar[d]^K & A^*(\Y) \ar[l]_(.45){F^*}\ar[d]^{K} \\
\Acl^*(\X)\ar[r]^{F_*} & \Acl^*(\Y)
}
\]
\end{lm}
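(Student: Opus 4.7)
The plan is to unwind the definitions on both sides of the diagram and then use the projection formula of Proposition~\ref{prop:proppp}\eqref{prop:pushpull}, together with Lemma~\ref{lm:Frho} (which turns a partition of unity on $\X$ into one on $\Y$ via push-forward) and Lemma~\ref{lm:JKinv} (which tells us $K$ is independent of the chosen partition of unity).

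Concretely, I would pick a partition of unity $\rho : X_0 \to [0,1]$ for $\X,$ so that by definition $K_\X(\alpha) = [\rho \cdot \alpha]$ for any $\alpha \in A^*(\X).$ By Lemma~\ref{lm:Frho}, the function $\rho' := (F_0)_*\rho$ on $Y_0$ is a partition of unity for $\Y,$ and by Lemma~\ref{lm:JKinv} the map $K_\Y$ does not depend on the particular partition of unity used to define it. Hence I may compute $K_\Y(\beta) = [\rho' \cdot \beta] = [(F_0)_*\rho \cdot \beta]$ for any $\beta \in A^*(\Y).$

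For the other composition, by the definitions of $F_*$ on cleanly supported forms (given after Lemma~\ref{lm:pullpush}) and of $F^* = F_0^*,$ I have
\[
F_* \circ K_\X \circ F^*(\beta) = F_*[\rho \cdot F_0^*\beta] = \bigl[(F_0)_*(\rho \cdot F_0^*\beta)\bigr].
\]
Since $\rho$ is a function, $\rho \cdot F_0^*\beta = F_0^*\beta \wedge \rho,$ and the projection formula Proposition~\ref{prop:proppp}\eqref{prop:pushpull} applied to the proper submersion $F_0$ gives $(F_0)_*(F_0^*\beta \wedge \rho) = \beta \wedge (F_0)_*\rho.$ Thus
\[
F_* \circ K_\X \circ F^*(\beta) = [\beta \cdot (F_0)_*\rho] = K_\Y(\beta),
\]
which is the desired commutativity.

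There is no real obstacle here, since the previous lemmas do all the heavy lifting; the only point requiring a brief justification is that the projection formula of Proposition~\ref{prop:proppp} applies, which is legitimate because $F$ is a refinement, hence in particular a proper submersion, with its canonical relative orientation as a local diffeomorphism.
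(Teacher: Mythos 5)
Your proof is correct and follows essentially the same route as the paper: choose a partition of unity $\rho$ on $\X,$ use Lemma~\ref{lm:Frho} to see that $F_*\rho$ is a partition of unity on $\Y$ (with Lemma~\ref{lm:JKinv} justifying that $K$ on $\Y$ may be computed with it), and conclude via the projection formula of Proposition~\ref{prop:proppp}\eqref{prop:pushpull}. No changes needed.
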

\begin{proof}
By Lemma~\ref{lm:pu} choose a partition of unity $\rho \in \Acl^*(\X).$ By Lemma~\ref{lm:Frho} the push-forward $F_*\rho \in \Acl^*(\Y)$ is a partition of unity. So, for $\alpha \in A^*(\Y)$ Proposition~\ref{prop:proppp}\eqref{prop:pushpull} gives
\[
F_*KF^*\alpha = F_*(\rho F^*\alpha) = (F_*\rho)\alpha = K\alpha.
\]
\end{proof}

\begin{proof}[Proof of Lemma~\ref{lm:JFK}]
Lemma~\ref{lm:JKinv} and Lemma~\ref{lm:FJF = J} give
\[
F^*\circ (J\circ F_*\circ K) = (F^*\circ J\circ F_*)\circ K = J\circ K = \Id.
\]
On the other hand, Lemma~\ref{lm:JKinv} and Lemma~\ref{lm:FKF = K} give
\[
(J\circ F_*\circ K) \circ F^* = J \circ (F_*\circ K\circ F^*) = J \circ K = \Id.
\]
The lemma follows.
\end{proof}

\section{Integration properties}\label{sec:thm1}

\subsection{Fiber products of orbifolds}\label{ssec:fpo}

In a discussion of properties of differential forms, it is useful for us to describe an explicit construction of fiber products in $\EPG$. To that end, we recall the definition of a weak fiber products in a general 2-category $\mC$ as in~\cite[Remark 2.2]{Tommasini}.

Let $F,G,H: \X\to \Y$ be morphisms in $\mC$ such that there are 2-morphisms $\a:F\Rarr G$ and $\beta:G\Rarr H$. Then we denote by
\[
\beta\circ \a: F\Rarr H
\]
the (vertical) composition 2-morphism that attaches to any object $x\in X_0$ the morphism
\[
(\beta\circ \a)_x:=\beta_x\circ \a_x:F(x)\lrarr H(x).
\]

Let $F, H:\X\to \Y$ and $G, L: \Y\to \mZ$ be morphisms such that there are 2-morphisms $\a:F\Rarr H$ and $\beta:G\Rarr L$. Then
we denote by
\[
\beta*\a:G\circ F \Rarr L\circ H
\]
the (horizontal) composition 2-morphism that attaches to any object $x\in X_0$ the morphism
\[
(\beta*\a)_x:= L_1(\a_x)\circ \beta_{F(x)} =\beta_{H(x)}\circ G_1(\a_x) :G(F(x))\lrarr L(H(x)).
\]
In particular,
\[
(\Id_G*\a)_x = G_1(\a_x), \qquad
(\beta *\Id_F)_x = \beta_{F(x)}.
\]

Let $F:\X\to \mZ$ and $G:\Y\to \mZ$ be morphisms in $\mC$.
A \textbf{weak fiber product} of $F,G,$ is a quadruple $(\mP,A_1,A_2,\a)$ where $\mP$ is a 0-cell, $A_1:\mP\to \X$ and $A_2:\mP\to \Y$ are morphisms, and $\a:G\circ A_2\Rarr F\circ A_1$ is an invertible 2-morphism,
\begin{equation}\label{eq:fibprod}
\xymatrix{
\mP \ar[rr]^{A_2}\ar[d]^{A_1} & & \Y \ar[d]^{G}\ar@2{S->S}[dll]_{\a}\\
\X \ar[rr]^{F} & &\mZ ,
}
\end{equation}
such that the two properties below are satisfied.
\begin{enumerate}[I.]
\item\label{it:wup1}
    For any triple $(\D,\; B_1\!:\!\D\to \X,\; B_2\!:\!\D\to \Y)$, if there exists an invertible 2-morphism $\a':G\circ B_2\Rarr F\circ B_1$, then there exists a triple $(U\!:\!\D\to\mP, \; \beta_1\!:\!B_1\Rarr A_1\circ U,\; \beta_2\!:\! B_2\Rarr A_2\circ U)$ such that $\beta_j$ are invertible and
    \[
    (\a*\Id_U) \circ (\Id_G* \beta_2) = (\Id_F*\beta_1) \circ \a' : G\circ B_2 \Rarr F\circ A_1\circ U.
    \]
\[
\xymatrix{
\color{red}{\D}\ar@[blue]@{-->}^{\color{blue}{U}}[dr]\ar@[red]@/^10pt/[drrr]^{\color{red}{B_2}} \ar@[red]@/_10pt/[rdd]_{\color{red}{B_1}} & & \ar@[blue]@2{S->}[dl]_(.65){\color{blue}{\beta_2}}&\\
& \mP \ar[rr]^{A_2}\ar[d]^{A_1} & & \Y \ar[d]^{G}\ar@2{S->S}[dll]_{\color{red}{\a'}}^{\a}\\
\ar@[blue]@2{T->}[ur]^(.75){\color{blue}{\beta_1}}& \X \ar[rr]^{F} & &\mZ
}
\]
\item\label{it:wup2}
    For any triple $(\D,\; U\!:\!\D\to \mP,\; U'\!:\!\D\to \mP)$, if there exist invertible 2-morphisms $\gamma_j:A_j\circ U \Rarr A_j\circ U'$ for $j=1,2,$ such that
    \[
    (\a*\Id_{U'})\circ (\Id_{G} * \gamma_2) = (\Id_{F} * \gamma_1)\circ (\a* \Id_{U}): G\circ A_2\circ U\Rarr F\circ A_1\circ U',
    \]
    then there exists a unique invertible $\gamma:U\Rarr U'$ such that
    \[
    \gamma_j=\Id_{A_j}*\gamma,\qquad j=1,2.
    \]
    \[
\xymatrix{
 & & \D \ar@{-->}@/^6pt/[dd]^(.75){U}
 \ar@{..>}@/_6pt/[dd]_(.75){U'} \ar@{-->}@/_30pt/[dddll]_(.4){A_1\circ U} \ar@{..>}@/^10pt/[dddll]_(.3){A_1\circ U'} \ar@{-->}@/^30pt/[dddrr]^(.4){A_2\circ U} \ar@{..>}@/_10pt/[dddrr]^(.3){A_2\circ U'}& & \\
 \ar@2{S->}@[red][dr]^(.75){\color{red}{\gamma_1}} &&
 \ar@2{>T}@[blue][l]_(.01){\color{blue}{\gamma}}
 && \ar@2{S->}@[red][dl]_(.75){\color{red}{\gamma_2}} \\
 & & \mP\ar[drr]_{A_2}\ar[dll]^{A_1} & & \\
\X\ar[drr]^{F} && &\ar@2{S->S}[ll]^{\a}& \Y\ar[dll]_{G}\\
 & & \mZ & &\\
}
\]
\end{enumerate}

\begin{rem}\label{rem:wfpbicat}
In a weak 2-category, the weak fiber product is defined similarly,  except the formulae in properties~\ref{it:wup1}-\ref{it:wup2} involve extra 2-morphisms that compensate for the lack of associativity. See, e.g., Remark 2.2 in~\cite{Tommasini}.
The precise expressions will not be required for our purposes.
\end{rem}

To construct a weak fiber product in $\Orb$, we follow the argument of~\cite[Lemma 26]{Zernik2}.
As a first step, we describe a candidate weak fiber product in $\EPG$. For this, consider morphisms $F:\X\to\mZ$ and $G:\Y\to\mZ$, and define a groupoid $\mP$ by
\begin{gather*}
P_0 = X_0\prescript{}{F_0}\times_{s}^{} Z_1 \prescript{}{t}\times_{G_0}^{} Y_0,\\
P_1 = X_1\prescript{}{s\circ F_1}\times_{s}^{} Z_1 \prescript{}{t}\times_{s\circ G_1}^{} Y_1,
\end{gather*}
with the structure maps defined below.
Let $f=(f_1,f_2,f_3)\in P_1$. Write
\[
x_1=s(f_1),\quad  z_1 = f_2, \quad y_1=s(f_3), \quad x_2=t(f_1), \quad y_2=t(f_3),
\]
and take $z_2$ so that the following diagram commutes.
\[
\xymatrix{
F_0(x_1)\ar[rr]^{z_1}\ar[d]^{F_1(f_1)} & &  G_0(y_1)\ar[d]^{G_1(f_3)}\\
F_0(x_2)\ar@{-->}[rr]^{z_2} & & G_0(y_2)
}
\]
Define the source and the target of $f$ by
\begin{equation}\label{eq:s}
s(f) := (x_1, z_1  , y_1),
\qquad
t(f) := (x_2, z_2 , y_2).
\end{equation}
It is immediate from definition that these indeed belong to $P_0$:
\begin{gather*}
F_0(s(f_1)) = F_0(x_1) = s(z_1),
\qquad
G_0(s(f_3)) = G_0(y_1) = t(z_1),\\
F_0(t(f_1)) = F_0(x_2) = s(z_2),
\qquad
G_0(t(f_3)) = G_0(y_2) = t(z_2).
\end{gather*}
For $p=(x,z,y)\in P_0$, define the identity morphism by
\begin{equation}\label{eq:e}
e(p):=(e(x), z, e(y)).
\end{equation}
To verify that $e(p)\in P_1$, note that $s\circ F_1(e(x))= F_0\circ s(e(x)) = F_0(x) = s(z)$, and similarly $s\circ G_1(e(y))=t(G_1(e(y)))=t(z)$.
For $p,q,r\in P_0$
and $f:p\to q, g:q\to r\in P_1$, write $f=(f_1,f_2,f_3)$, $g=(g_1,g_2,g_3)$, and define inverse and composition by
\begin{equation}\label{eq:i}
i(f):=(i(f_1),\, G_1(f_3)\circ f_2\circ i(F_1(f_1)),\, i(f_3)),
\qquad
m(g,f):=(m(g_1,f_1),\, f_2,\, m(g_3,f_3)).
\end{equation}
To verify the images are indeed in $P_1$, compute
\begin{gather*}
s(F_1(i(f_1)) 
= s(i(F_1(f_1))) = s(G_1(f_3)\circ f_2\circ i(F_1(f_1))),
\\
s(G_1(i(f_3)))
=
t(G_1(f_3)) = t(G_1(f_3)\circ f_2\circ i(F_1(f_1))),\\
s\circ F_1(m(g_1,f_1)) = F_0(s(f_1)) =
s(f_2),\\
s\circ G_1(m(g_3,f_3)) = G_0(s(f_3)) = s(G_1(f_3)) = t(f_2).
\end{gather*}

\begin{lm}\label{lm:fibEPG}
Let $F:\X\to \mZ$ and $G:\Y\to \mZ$ be transverse strongly smooth morphisms in $\EPG$.
Then the weak fiber product $\mP:=\X\prescript{}{F}\times_G\Y$ exists in $\EPG$ with
\begin{gather*}
P_0 = X_0\prescript{}{F_0}\times_{s}^{} Z_1 \prescript{}{t}\times_{G_0}^{} Y_0,\\
P_1 = X_1\prescript{}{s\circ F_1}\times_{s}^{} Z_1 \prescript{}{t}\times_{s\circ G_1}^{} Y_1,
\end{gather*}
and structure maps defined via~\eqref{eq:s},~\eqref{eq:e}, and~\eqref{eq:i}.
\end{lm}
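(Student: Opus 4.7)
The plan is to verify in order: (i) that the data $(P_0,P_1,s,t,e,i,m)$ defined in the statement forms an \'etale proper groupoid with corners; (ii) that the two projections $A_j$ ($j=1,2$) sending an object $(x,z,y)$ to $x$ and $y$ respectively, together with the natural transformation $\alpha : G\circ A_2\Rightarrow F\circ A_1$ whose component at $(x,z,y)\in P_0$ is the morphism $z\in Z_1$, assemble into a diagram of the form~\eqref{eq:fibprod}; and finally (iii) that this data satisfies the two universal properties~\ref{it:wup1} and~\ref{it:wup2} defining a weak fiber product.

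First, the manifold with corners structure on $P_0$ and $P_1$. Since $F$ and $G$ are strongly smooth and transverse, so are the composed maps $s\circ F_1$ and $s\circ G_1$ involved in the iterated fiber products, because $s,t$ are local diffeomorphisms (and transversality is preserved under composition with local diffeomorphisms). Therefore, by~\cite[Theorem 6.4]{Joyce2} applied twice (or once to the appropriate triple fiber product), $P_0$ and $P_1$ inherit the structure of smooth manifolds with corners, and their dimensions agree. Next, I would verify that the structure maps~\eqref{eq:s},~\eqref{eq:e},~\eqref{eq:i} are smooth and satisfy the groupoid axioms (unitality, associativity, compatibility of $i$ with $s,t$). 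The key point is that each structure map on $\mP$ is built pointwise from the corresponding structure maps on $\X,\Y,\mZ$ together with pre- and post-composition with the $z$-coordinate; the groupoid axioms on $\mP$ therefore reduce to the groupoid axioms on $\X,\Y,\mZ.$ To show $\mP$ is \'etale, I would check that $s,t : P_1 \to P_0$ are local diffeomorphisms: on the first and third factors this is immediate because $s,t$ are local diffeomorphisms in $\X$ and $\Y$, while on the middle factor $z$ is transported by the commuting-square prescription using $F_1(f_1)$ and $G_1(f_3)$, which are local diffeomorphisms. The properness of $s\times t : P_1 \to P_0\times P_0$ reduces to the properness of the corresponding maps in $\X, \Y, \mZ$ via a standard diagram chase using that the iterated fiber product sits inside $X_1\times Z_1\times Y_1$ and that $s\times t$ on each factor is proper.

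For (ii), smoothness of $A_1, A_2$ is clear from the construction since they are the projections from (iterated) fiber products, and smoothness of $\alpha$ amounts to the smoothness of the $Z_1$-projection $P_0\to Z_1$. The naturality equation $s\circ \alpha = F_0\circ A_1$ and $t\circ\alpha = G_0\circ A_2$ holds by the definition of $P_0$ as the fiber product along $F_0,G_0$ over $s,t$ of $Z_1.$ Invertibility of $\alpha$ as a $2$-morphism follows from the fact that every element of $Z_1$ is an isomorphism in the groupoid $\mZ.$

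The main step, and the one where most work is concentrated, is (iii): verifying the two universal properties. For property~\ref{it:wup1}, given $(\D, B_1, B_2, \alpha')$, I would define $U : \D \to \mP$ on objects by $U_0(d) = (B_1(d), \alpha'_d, B_2(d))\in P_0$, which is well-typed exactly because $\alpha' : G\circ B_2\Rightarrow F\circ B_1$ has source $G_0(B_2(d))$ and target $F_0(B_1(d))$. On morphisms, $U_1$ is forced by compatibility with source and target. The $2$-morphisms $\beta_1,\beta_2$ can be taken to be identities, and one verifies the compatibility formula $(\alpha*\Id_U)\circ(\Id_G*\beta_2) = (\Id_F*\beta_1)\circ \alpha'$ by unfolding the pointwise formulas for horizontal composition. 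For property~\ref{it:wup2}, given $U,U'$ and $\gamma_1,\gamma_2$ as in the hypothesis, I would construct the unique $\gamma : U\Rightarrow U'$ as follows: its component at $d\in D_0$ must be a morphism in $\mP$ from $U(d) = (x,z,y)$ to $U'(d) = (x',z',y')$, and the first and third coordinates are forced to be $(\gamma_1)_d$ and $(\gamma_2)_d$ respectively (by the desired identity $\gamma_j = \Id_{A_j}*\gamma$), while the middle coordinate $z$ is already determined by the source. Existence (i.e.\ that this triple lies in $P_1$) is exactly the compatibility identity assumed in~\ref{it:wup2}, and uniqueness is automatic since the first and third components determine $\gamma_d$.

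The hard part I expect to be the diagrammatic verification in property~\ref{it:wup1}: one must trace through the definitions of horizontal and vertical composition of $2$-morphisms and confirm that the pointwise formulas really do reproduce $\alpha'$. Nothing is deep, but the bookkeeping is the genuine content of the proof; everything else reduces cleanly to properties of the three groupoids $\X,\Y,\mZ$ via the explicit formulas~\eqref{eq:s},~\eqref{eq:e},~\eqref{eq:i}.
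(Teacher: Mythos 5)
Your proposal follows essentially the same route as the paper's proof: the same explicit groupoid $\mP$, the same projections $A_1,A_2$, the choice $\beta_j=\Id$ in property~\ref{it:wup1}, and the same observation in property~\ref{it:wup2} that a morphism of $\mP$ between fixed objects is determined by its first and third components (the paper packages this as a Cartesian square of Hom-sets). One typing slip, though: by the definition of $P_0 = X_0\prescript{}{F_0}\times_{s}^{} Z_1 \prescript{}{t}\times_{G_0}^{} Y_0$, the middle coordinate $z$ of an object satisfies $s(z)=F_0(x)$ and $t(z)=G_0(y)$, so $z$ is a morphism $F_0(x)\to G_0(y)$, whereas the component of $\a:G\circ A_2\Rarr F\circ A_1$ at $(x,z,y)$ must go the other way; the paper therefore takes $\a_p=i(z)$, and correspondingly $U_0(d)=(B_{10}(d),\,i(\a'_d),\,B_{20}(d))$ rather than $(B_1(d),\a'_d,B_2(d))$ --- your claim that the latter is ``well-typed'' is exactly backwards, since $\a'_d$ has source $G_0(B_{20}(d))$ while the middle slot requires source $F_0(B_{10}(d))$. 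With these inverses inserted, the key identity in property~\ref{it:wup1} becomes $(\a*\Id_U)_d=\a_{U(d)}=i(i(\a'_d))=\a'_d$, and the rest of your argument goes through as in the paper.
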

\begin{proof}
By~\cite[Theorem 6.4]{Joyce2} the topological spaces $P_0,P_1,$ are manifolds with corners, and it is immediate that the structure maps make $\mP$ into an \'etal proper groupoid. Recall that in $\EPG$ all 2-morphisms are automatically invertible.

Consider the specified $\mP$ with $A_1,A_2,$ projections on the first and third component, respectively. We construct a 2-morphism $\a: (G\circ A_2)\Rarr (F\circ A_1)$ to complete diagram~\eqref{eq:fibprod} as follows. For any $p=(x,z,y)\in P_0$, define
\[
\a_p:=i(z) :(G\circ A_2)_0(p)=G_0(y)\lrarr (F\circ A_1)_0(p) = F_0(x).
\]
It is immediate from definition that with $\a$ thusly defined, for any $f:p\to q \in P_1$ the diagram
\[
\xymatrix{
G_0(A_{20}(p))\ar[rr]^{\a_p}\ar[d]^{G_1(A_{21}(f))} & & F_0(A_{10}(p)) \ar[d]^{F_1(A_{11}(f))}\\
G_0(A_{20}(q))\ar[rr]^{\a_q} & & F_0(A_{10}(q))
}
\]
commutes. 
Thus, $(\mP, A_1,A_2,\a)$ is a weak fiber product of $F,G$.

If $(\D,B_1,B_2,\a')$ are given as in property~\ref{it:wup1}, define $U:\D\to \mP$ by
\begin{gather*}
U_0(d):=(B_{10}(d),i(\a'_d), B_{20}(d)), \qquad d\in D_0,\\
U_1(\hd):=(B_{11}(\hd), i(\a'_{s(\hd)}), B_{21}(\hd)),\qquad \hd\in D_1.
\end{gather*}
It is easy to see that the image of $U_j$ is indeed in $P_j$ for $j=0,1$ and that $B_j=A_j\circ U$.
Take $\beta_j:=\Id_{B_j}$, $j=0,1$. Then
relation we need to prove becomes $\a*\Id_U = \a'$. To verify it, note that for any $d\in D_0$ the definition of $\a$ gives
\[
(\a * \Id_U)_d = \a_{U(d)}
=
\a_{(B_{10}(d),i(\a'_d),B_{20}(d))}
= i(i(\a'_d)) = \a'_d,
\]
as desired.

To verify property~\ref{it:wup2}, let $(\D, U, U', \gamma_1, \gamma_2)$ be a quintuple that satisfies the assumptions.
Set
\[
\gamma := \gamma_1\times  i(\a * \Id_U)\times \gamma_2.
\]
Thus defined, $\gamma$ satisfies $\gamma_j=\Id_{A_j}*\gamma$ for $j=1,2$. Since for any $d\in D_0$ the following square is Cartesian,
\[
\xymatrix{
Hom_{\mP}(U_0(d),U'_0(d))\ar[r]^{A_{21}}\ar[dd]^{A_{11}} & Hom_{\Y}(A_{20}(U_0(d)),A_{20}(U'_0(d)))\ar[d]^{G_1}\\
& Hom_{\mZ}(G_0(A_{20}(U_0(d),G_0(A_{20}(U'_0(d))))\ar[d]^{\a\circ -\circ i(\a)}\\
Hom_{\X}(A_{10}(U_0(d)),A_{10}(U'_0(d)))\ar[r]^(.45){F_1} & Hom_{\mZ}(F_0(A_{10}(U_0(d))),F_0(A_{10}(U'_0(d)))),
}
\]
the morphism $\gamma_d$ is uniquely determined by its projections $A_{j1}(\gamma_d)=(\Id_{A_j}*\gamma)_d$, $j=1,2$. So, $\gamma$ is unique.
\end{proof}
In the situation of Lemma~\ref{lm:fibEPG}, consider the weak pull-back diagram
\[
\xymatrix{
\X\prescript{}{F}\times^{}_G\Y \ar[rr]^{A_2}\ar[d]^{A_1} & & \Y \ar[d]^{G}\ar@2{S->S}[dll]_{\a}\\
\X \ar[rr]^{F} & &\mZ.
}
\]
Given a relative orientation $o^G = (o^{G_0},o^{G_1})$ of $G,$ we define the \textbf{pull-back relative orientation} $F^* o^G$ of $A_1$ as follows. Consider the following diagram in which all squares are Cartesian.
\[
\xymatrix{
X_0\prescript{}{F_0}\times_{s}^{}Z_1\prescript{}{t}\times_{G_0}^{} Y_0
\ar[r]^(.6){B^1_0}\ar[d]^{C^1_0}\ar@/_30pt/[dd]_{A_{10}} \ar@/^30pt/[rr]^{A_{20}}&
Z_1\prescript{}{t}\times_{G_0}^{} Y_0 \ar[r]^(.6){B^2_0}\ar[d]^{T_0} & Y_0\ar[d]^{G_0} \\
X_0\prescript{}{F_0}\times_{s}^{} Z_1 \ar[r]^(.6){S_0}\ar[d]^{C^2_0} & Z_1 \ar[d]^{s}\ar[r]^{t} & Z_0\\
X_0 \ar[r]^{F_0} & Z_0&
}
\]
Let $o^{C^1_0} := S_0^* t^* o^{G_0}$ and let $o^{A_{10}} := o^{C^2_0}_c \circ o^{C^1_0}.$ Consider also the following diagram in which all squares are Cartesian.
\[
\xymatrix{
X_1\prescript{}{s\circ F_1}\times_{s}^{}Z_1\prescript{}{t}\times_{s\circ G_1}^{} Y_1
\ar[r]^(.6){B^1_1}\ar[d]^{C^1_1}\ar@/_30pt/[dd]_{A_{11}} \ar@/^30pt/[rr]^{A_{21}}&
Z_1\prescript{}{t}\times_{s \circ G_1}^{} Y_1 \ar[r]^(.6){B^2_1}\ar[d]^{T_1} & Y_1\ar[d]^{s \circ G_1} \\
X_1\prescript{}{s\circ F_1}\times_{s}^{} Z_1 \ar[r]^(.6){S_1}\ar[d]^{C^2_1} & Z_1 \ar[d]^{s}\ar[r]^{t} & Z_0\\
X_1 \ar[r]^{s\circ F_1} & Z_0&
}
\]
Let $o^{C^1_1} := S_1^* t^* (o^s_c \circ o^{G_1})$ and let $o^{A_{11}} := o^{C^2_1}_c \circ o^{C^1_1}.$ Then, $F^* o^G := (o^{A_{10}},o^{A_{11}}).$ Similarly, given a relative orientation $o^F$ of $F$ we can define the transpose pull-back orientation $G^* o^F$ of $A_2.$

The following is true by~\cite[Corollary 0.3 and Theorem 0.2]{Tommasini}.
\begin{lm}\label{cl:fibprodorb}
Let $f=F|R:\X\leftarrow \X'\rarr \mZ$, $g=G|Q:\Y\leftarrow \Y'\rarr \mZ$ be transverse strongly smooth morphisms in $\Orb$.
Let $(\widehat{\mP}=\X'\prescript{}{F}\times_{G}^{}\Y', \hat{A}_1,\hat{A}_2,\hat{\a})$ be a weak fiber product in $\EPG$ as described in Lemma~\ref{lm:fibEPG}. Then a weak fiber product $\mP=\X\prescript{}{f}\times_g^{}\Y$ exists in $\Orb$ that is given by
\[
P_0 :=\widehat{P}_0 = X'_0\prescript{}{F_0}\times_s^{} Z_1\prescript{}{t}\times_{G_0}^{} Y'_0,
\quad
P_1 :=\widehat{P}_1 = X'_1\prescript{}{s\circ F_1}\times_s^{} Z_1\prescript{}{t}\times_{s\circ G_1}^{} Y'_1,
\]
with the projections
\[
a_1:=(R\circ \hat{A}_1)|\Id_{\mP}:\mP\lrarr \X,
\qquad
a_2:=(Q\circ \hat{A}_2)|\Id_{\mP}:\mP\lrarr \Y,
\]
and the 2-morphism
$\a:=\Id_{\mP}*\hat{\a}$.
\end{lm}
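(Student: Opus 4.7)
The plan is to deduce the statement directly from the cited results of Tommasini, so the proof reduces to verifying that the hypotheses of those theorems hold in our setting and then identifying the abstract construction with the explicit one in the lemma. First I would recall that by Pronk's construction~\cite{Pronk}, the 2-category $\Orb$ is the bicategory of fractions $\EPG[W^{-1}]$ where $W$ is the class of refinements. Tommasini's Theorem~0.2 asserts that given a weak fiber product in $\EPG$ whose legs lie in suitable classes, the same underlying datum represents a weak fiber product in $\EPG[W^{-1}]$, provided $W$ admits a right bicalculus of fractions and is compatible with the fiber product. Tommasini's Corollary~0.3 then packages the compatibility in a form directly applicable to our situation.

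Next I would check the hypotheses in order. The existence of the weak fiber product $\widehat{\mP}=\X'\prescript{}{F}\times_G\Y'$ in $\EPG$ is exactly Lemma~\ref{lm:fibEPG}, where transversality of $F$ and $G$ (inherited from the transversality assumption on $f,g$) is what guarantees that $P_0$ and $P_1$ are manifolds with corners. That refinements in $\EPG$ admit a right bicalculus of fractions is verified by Zernik~\cite{Zernik2}, who shows in particular that refinements are stable under weak pullback along strongly smooth morphisms. The remaining compatibility condition amounts to showing that the projection $\hat{A}_j$ in $\widehat{\mP}$ is a strongly smooth morphism for $j=1,2$ and that weak pullback of a refinement is again a refinement; this follows from the analogous statement for local diffeomorphisms of manifolds with corners, combined with the observation that full faithfulness and essential surjectivity of the underlying groupoid functors are preserved under pullback.

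With these verifications in place, Tommasini's construction produces the weak fiber product of $f,g$ in $\Orb$. The remaining task is to identify the abstract output with the explicit quadruple in the statement. The 0-cell is simply $\widehat{\mP}$ viewed as an object of $\Orb$, so $P_0,P_1$ are as described. The projections $\hat{A}_1,\hat{A}_2$ in $\EPG$ get composed with the "source" refinements $R,Q$ using the right-fraction composition in $\Orb$; unwinding definitions, this composition is represented by the left fraction with trivial first leg $\Id_{\mP}$, yielding precisely $a_1=(R\circ\hat{A}_1)|\Id_{\mP}$ and $a_2=(Q\circ\hat{A}_2)|\Id_{\mP}$. Finally, the 2-morphism $\hat{\a}$ in $\EPG$ descends under Pronk's embedding $\EPG\to\Orb$ to $\Id_{\mP}\ast\hat{\a}$, matching the asserted $\a$.

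The principal obstacle is book-keeping rather than a genuine mathematical difficulty: one must carefully match the associativity and whiskering coherences appearing in Tommasini's abstract characterization (cf.\ Remark~\ref{rem:wfpbicat}) against the concrete composition law for left fractions in Pronk's bicategory $\EPG[W^{-1}]$. Once the identifications of $a_1,a_2,\a$ are confirmed, both universal properties~\ref{it:wup1} and~\ref{it:wup2} follow automatically from the corresponding properties of $\widehat{\mP}$ in $\EPG$, transported across Pronk's equivalence.
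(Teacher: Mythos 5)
Your proposal follows the same route as the paper, which simply cites Tommasini's Theorem~0.2 and Corollary~0.3 together with Lemma~\ref{lm:fibEPG} and Zernik's verification that refinements admit a calculus of fractions; you spell out the hypothesis-checking and the identification of the projections and 2-morphism in more detail than the paper does, but the argument is the same. No gaps.
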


We can illustrate this construction via a commutative diagram in $\EPG$:
\[
\xymatrix{
{\X\prescript{}{f}\times_{g}^{}\Y}& & {\X\prescript{}{f}\times_{g}^{}\Y} \ar[ll]_{\Id}\ar[rr]^(.65){Q\circ\hat{A}_2} &&
{\Y}\\
{\X\prescript{}{f}\times_{g}^{}\Y}\ar[u]_{\Id}\ar[d]^{R\circ \hat{A}_1} &&
{\color{grayish}{\X'\prescript{}{F}\times_{G}^{}\Y'}} \ar@[grayish][rr]^(0.7){\color{grayish}{\hat{A}_2}} \ar@[grayish][d]_{\color{grayish}{\hat{A}_1}} \ar@[grayish]@{-->}[u]^{\color{grayish}{\Id}}
\ar@[grayish]@{-->}[ll]_{\color{grayish}{\Id}}
& &\Y'\ar[u]_Q\ar[d]^G\\
{\X}&& {\X'}\ar[rr]^{F}\ar[ll]_R && {\mZ}
}
\]

In the situation of Lemma~\ref{cl:fibprodorb}, given a relative orientation $o^g$ of $g$, which is the same as a relative orientation $o^G$ of $G,$ the \textbf{pull-back relative orientation} $f^*o^g$ of $a_1,$ which is the same as an orientation of $R\circ \hat A_1,$ is given by $f^* o^{g} = o_c^R \circ F^* o^G.$ Given a relative orientation $o^f$ of $f,$ the transpose pull-back  orientation $^t\!g^*o^f$ of $a_2$ is defined similarly.

\subsection{Integration properties in EPG}

The objective of this section is to prove integration properties for differential forms on \'etale proper groupoids, particularly the analogue  Theorem~\ref{thm:main}.
Note that the property analogous to Theorem~\ref{thm:main}\ref{nt} is covered by Lemma~\ref{lm:2mor} above.

\subsubsection{Property~\ref{opushcomp}}

The claim on composition of pull-backs is immediate from definition. For push-forward, we detail as follows.

\begin{lm}\label{lm:compepg}
Let $G: \X\to \Y$, $F:\Y\to \cZ,$ be relatively oriented proper submersions in $\EPG$. Then
$F_*\circ G_* = (F\circ G)_*$.
\end{lm}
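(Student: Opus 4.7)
The plan is to unfold Definition~\ref{dfn:pushepg} for all three push-forwards and reduce the statement to the composition property on cleanly supported forms. Writing $J_\X, K_\X,$ etc.\ for the maps of Section~\ref{sec:forms} attached to a groupoid $\X,$ we have
\[
F_*\circ G_* \;=\; (J_\cZ\circ F_*\circ K_\Y)\circ(J_\Y\circ G_*\circ K_\X),
\]
where the inner $F_*, G_*$ on the right are push-forwards on cleanly supported forms. By Lemma~\ref{lm:JKinv}, $K_\Y\circ J_\Y=\Id_{\Acl^*(\Y)},$ so the middle collapses and
\[
F_*\circ G_* \;=\; J_\cZ\circ F_*\circ G_*\circ K_\X.
\]
Hence the lemma reduces to showing $F_*\circ G_* = (F\circ G)_*$ as maps $\Acl^*(\X)\to\Acl^*(\cZ).$

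For this, I will work representative by representative. For $\alpha\in\Acl^*(X_0),$ the definition $F_*[\alpha]=[(F_0)_*\alpha]$ gives
\[
F_*(G_*[\alpha]) \;=\; F_*\bigl[(G_0)_*\alpha\bigr] \;=\; \bigl[(F_0)_*(G_0)_*\alpha\bigr].
\]
Since $(F\circ G)_0=F_0\circ G_0,$ Proposition~\ref{prop:proppp}\eqref{prop:pushcomp} applied to the manifold-with-corners maps $G_0$ and $F_0$ yields $(F_0)_*(G_0)_*\alpha=(F_0\circ G_0)_*\alpha,$ provided the relative orientations match up. This is where one must check bookkeeping: the relative orientation on $F\circ G$ is, by convention, $o^{F_0}\circ o^{G_0}$ on objects and $o^{F_1}\circ o^{G_1}$ on morphisms, and one verifies compatibility with $s,t$ using~\eqref{eq:relor} for $F$ and $G$ separately, so this is indeed a relative orientation of $F\circ G$ in the sense of Section~\ref{ssec:epgwc}. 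With this choice, $[(F_0\circ G_0)_*\alpha] = (F\circ G)_*[\alpha].$

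Combining these two steps,
\[
F_*\circ G_* \;=\; J_\cZ\circ (F\circ G)_*\circ K_\X \;=\; (F\circ G)_*,
\]
which is the desired equality. The argument is essentially formal; the only real obstacle is the orientation bookkeeping in the last step, but this is a direct consequence of the conventions already set up in Section~\ref{ssec:epgwc} together with associativity of the composition of relative orientations of smooth maps.
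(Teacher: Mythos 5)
Your proposal is correct and follows essentially the same route as the paper: unfold Definition~\ref{dfn:pushepg}, cancel $K_\Y\circ J_\Y=\Id$ via Lemma~\ref{lm:JKinv}, and reduce to Proposition~\ref{prop:proppp}\eqref{prop:pushcomp} applied to $F_0$ and $G_0$ with the composed relative orientation. The paper writes this as a single chain of equalities, and your extra remarks on the orientation bookkeeping match the conventions already fixed in Sections~\ref{ssec:orientations} and~\ref{ssec:epgwc}.
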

\begin{proof}
Let $\xi\in A^*(\X)$.
By Lemma~\ref{lm:JKinv} and Proposition~\ref{prop:proppp}\eqref{prop:pushcomp} applied to $F_0,G_0,$ we have
    \[
    F_*G_*\xi = J(F_0)_*KJ(G_0)_*K\xi
    = J(F_0)_*(G_0)_*(K\xi)
    = J(F_0\circ G_0)_*(K\xi)
    = (F\circ G)_*\xi.
    \]
\end{proof}

\subsubsection{Property~\ref{opushpull}}

First we establish two basic results.

\begin{lm}\label{lm:AcMod}
$\Acl^*(\X)$ is a module over $A^*(\X)$.
\end{lm}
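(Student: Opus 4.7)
The plan is to define the $A^*(\X)$-action on $\Acl^*(\X)$ by
\[
\alpha \cdot [\beta] := [\alpha \wedge \beta], \qquad \alpha \in A^*(\X) \subset A^*(X_0), \quad \beta \in \Acl^*(X_0),
\]
and then verify that this is well-defined and satisfies the module axioms. Once well-definedness is established, associativity, bilinearity, and the unit axiom follow immediately from the corresponding properties of the exterior product on $A^*(X_0)$.

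First I would check that $\alpha \wedge \beta$ actually lies in $\Acl^*(X_0)$. Since $\supp(\alpha \wedge \beta) \subset \supp(\beta)$ and $\supp(\beta)$ is clean by hypothesis, Lemma~\ref{lm:clsc} gives that $\supp(\alpha \wedge \beta)$ is clean as well.

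The main step is well-definedness: if $\beta' \in \Acl^*(X_0)$ differs from $\beta$ by an element of $\im(s_* - t_*)$, say $\beta - \beta' = s_*\gamma - t_*\gamma$ for some $\gamma \in \Acl^*(X_1)$, I need to show $\alpha \wedge \beta - \alpha \wedge \beta' \in \im(s_* - t_*)$ as well. The point is to apply the projection formula of Proposition~\ref{prop:proppp}\eqref{prop:pushpull} to the local diffeomorphisms $s$ and $t$, equipped with their canonical relative orientations, yielding
\[
\alpha \wedge s_*\gamma = s_*(s^*\alpha \wedge \gamma), \qquad \alpha \wedge t_*\gamma = t_*(t^*\alpha \wedge \gamma).
\]
Because $\alpha \in A^*(\X)$, we have $s^*\alpha = t^*\alpha$, so the right-hand sides combine to $(s_* - t_*)(s^*\alpha \wedge \gamma)$. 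Again by Lemma~\ref{lm:clsc}, $s^*\alpha \wedge \gamma$ has clean support and so lies in $\Acl^*(X_1)$, justifying the computation inside the quotient.

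The only subtlety, and the step most worth flagging, is that the well-definedness argument crucially uses both defining properties of the two spaces at once: it uses that $\alpha$ is an \emph{invariant} form (so $s^*\alpha = t^*\alpha$) in order to identify $s^*\alpha \wedge \gamma$ with $t^*\alpha \wedge \gamma$, and it uses the projection formula to pull $\alpha$ inside the push-forwards. Everything else is formal.
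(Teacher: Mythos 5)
Your proposal is correct and follows essentially the same route as the paper: write the difference as $(s_*-t_*)\gamma$, apply the projection formula of Proposition~\ref{prop:proppp}\eqref{prop:pushpull} to $s$ and $t$, and use the invariance $s^*\alpha = t^*\alpha$ to combine the two terms into $(s_*-t_*)(s^*\alpha\wedge\gamma)$. The extra checks you include (that $\alpha\wedge\beta$ and $s^*\alpha\wedge\gamma$ have clean support, via Lemma~\ref{lm:clsc}) are implicit in the paper's proof and are a welcome bit of added care.
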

\begin{proof}
Let $\xi\in A^*(\X)$ and $\eta\in \Im(s_*-t_*)\subset \Acl^*(X_0)$. We need to show that $\xi\wedge \eta \in \Im(s_*-t_*)$.
Indeed, take $\zeta$ such that $\eta=(s_*-t_*)(\zeta)$. Then
\begin{align*}
\xi\wedge\eta
&=
\xi\wedge s_*\zeta - \xi \wedge t_*\zeta\\
&=
s_*(s^*\xi\wedge \zeta)-t_*(t^*\xi\wedge\zeta),\\
\shortintertext{and since $s^*\xi=t^*\xi$,}
&=
(s_*-t_*)(s^*\xi\wedge\zeta).
\end{align*}
\end{proof}

\begin{lm}\label{lm:Jmod}
For $\zeta\in A^*(\X)$ and $\eta\in \Acl^*(X_0)$, we have
$J[\zeta\wedge \eta]=\zeta\wedge J[\eta]$.
\end{lm}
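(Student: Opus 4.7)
The plan is to unwind the definitions and apply the projection formula. By definition, $J[\eta] = t_* s^* \eta$, so the claim reduces to checking the identity
\[
t_* s^*(\zeta \wedge \eta) = \zeta \wedge t_* s^* \eta
\]
in $A^*(X_0)$. This is a pointwise computation on the manifolds-with-corners level, with no clean-support subtleties beyond what Lemma~\ref{lm:AcMod} already provides (the latter is what makes $\zeta\wedge\eta$ a well-defined element of $\Acl^*(X_0)$ whose class in $\Acl^*(\X)$ makes sense).

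First I would use the multiplicativity of pull-back to write $s^*(\zeta \wedge \eta) = s^*\zeta \wedge s^*\eta$. Then, since $\zeta \in A^*(\X) = \ker(s^* - t^*)$, I may replace $s^*\zeta$ by $t^*\zeta$ to obtain
\[
t_* s^*(\zeta \wedge \eta) = t_*\bigl(t^*\zeta \wedge s^*\eta\bigr).
\]
At this point the projection formula for manifolds with corners, namely Proposition~\ref{prop:proppp}\eqref{prop:pushpull} applied to the local diffeomorphism $t: X_1 \to X_0$ (equipped with the canonical relative orientation), yields
\[
t_*\bigl(t^*\zeta \wedge s^*\eta\bigr) = \zeta \wedge t_* s^*\eta = \zeta \wedge J[\eta],
\]
which is the desired identity.

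There is essentially no obstacle: the argument is a two-line manipulation once one recognizes that the $A^*(\X)$-hypothesis on $\zeta$ is exactly what lets one swap $s^*\zeta$ for $t^*\zeta$ so that the projection formula for $t_*$ applies. The only thing worth double-checking is that the push-forward $t_*$ used in the definition of $J$ (with respect to the canonical relative orientation of the local diffeomorphism $t$) is the same push-forward used in Proposition~\ref{prop:proppp}\eqref{prop:pushpull}, which is so by the conventions fixed in Section~\ref{ssec:orientations}.
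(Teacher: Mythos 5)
Your argument is correct and is essentially identical to the paper's proof: both expand $J[\zeta\wedge\eta]=t_*s^*(\zeta\wedge\eta)=t_*(s^*\zeta\wedge s^*\eta)$, use $s^*\zeta=t^*\zeta$ from the hypothesis $\zeta\in A^*(\X)$, and conclude with the projection formula of Proposition~\ref{prop:proppp}\eqref{prop:pushpull} applied to $t_*$. Nothing further is needed.
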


\begin{proof}
By assumption, $s^*\zeta=t^*\zeta$. Therefore,
\begin{align*}
J[\zeta\wedge\eta]&= t_*s^*(\zeta\wedge\eta)\\
&=
t_*(s^*\zeta\wedge s^*\eta)\\
&=
t_*(t^*\zeta\wedge s^*\eta),\\
\shortintertext{which, by Proposition~\ref{prop:proppp}\eqref{prop:pushpull},}
&=
\zeta\wedge t_*s^*\eta
=\zeta \wedge J[\eta].
\end{align*}
\end{proof}

We are now ready to prove the integration property.

\begin{lm}\label{lm:modepg}
Let $F:\X\to \Y$ be a relatively oriented proper submersion in $\EPG$ and let $\xi\in A^*(\Y),$ $\eta\in A^*(\X)$. Then
		$
		F_*(F^*\xi\wedge\eta)=\xi\wedge F_*\eta.
		$
\end{lm}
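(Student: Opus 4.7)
The plan is to unwind the definition $F_*=J\circ (F_0)_*\circ K$ and reduce the claim to the manifold-level projection formula of Proposition~\ref{prop:proppp}\eqref{prop:pushpull}, with Lemma~\ref{lm:Jmod} handling the last step that moves the invariant form $\xi$ past the operator $J$.

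Concretely, I would fix a partition of unity $\rho$ on $\X$, so that $K\omega=[\rho\omega]$ for all $\omega\in A^*(\X)$. Then
\[
F_*(F^*\xi\wedge\eta)=J(F_0)_*K(F^*\xi\wedge\eta)=J\bigl[(F_0)_*(\rho\cdot F_0^*\xi\wedge\eta)\bigr].
\]
Since $\rho$ is a $0$-form, it commutes past $F_0^*\xi$ without sign, giving $\rho\cdot F_0^*\xi\wedge\eta=F_0^*\xi\wedge(\rho\eta)$. Applying Proposition~\ref{prop:proppp}\eqref{prop:pushpull} at the level of the manifolds $X_0,Y_0$ yields
\[
(F_0)_*\bigl(F_0^*\xi\wedge(\rho\eta)\bigr)=\xi\wedge (F_0)_*(\rho\eta).
\]
Then Lemma~\ref{lm:Jmod}, applied to $\zeta=\xi\in A^*(\Y)$ and the cleanly supported form $(F_0)_*(\rho\eta)\in\Acl^*(Y_0)$, gives
\[
J\bigl[\xi\wedge (F_0)_*(\rho\eta)\bigr]=\xi\wedge J\bigl[(F_0)_*(\rho\eta)\bigr]=\xi\wedge J(F_0)_*K\eta=\xi\wedge F_*\eta,
\]
which is the desired identity.

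The computation is essentially a bookkeeping exercise; the only nontrivial input is Proposition~\ref{prop:proppp}\eqref{prop:pushpull} on manifolds, which is quoted from~\cite{KupfermanSolomon}. The mild subtlety to double-check is that $\rho\cdot F_0^*\xi\wedge\eta$ indeed has clean support on $X_0$ (so that $(F_0)_*$ is applicable), but this is immediate from Lemmas~\ref{lm:clsc} and~\ref{lm:AcMod} since $\rho\in\Acl^*(X_0)$ and multiplication by $F_0^*\xi\wedge\eta\in A^*(X_0)$ preserves clean support. No genuine obstacle arises.
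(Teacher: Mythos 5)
Your proof is correct and follows essentially the same route as the paper's: unwind $F_*=J\circ(F_0)_*\circ K$, commute the $0$-form $\rho$ past $F_0^*\xi$, apply Proposition~\ref{prop:proppp}\eqref{prop:pushpull} to $F_0$, and finish with Lemma~\ref{lm:Jmod}. The extra check on clean support is a reasonable addition but not a point of divergence.
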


\begin{proof}
Compute
\begin{align*}
F_*(F^*\xi\wedge\eta)
&=
JF_*K(F_0^*\xi\wedge\eta)\\
&=
JF_*[F_0^*\xi\wedge \rho\eta]\\
&=
J[(F_0)_*(F_0^*\xi \wedge \rho \eta)]\\
\shortintertext{which, by Proposition~\ref{prop:proppp}\eqref{prop:pushpull} applied to $F_0$,}
&=
J[\xi\wedge (F_0)_*(\rho\eta)],\\
\shortintertext{and by Lemma~\ref{lm:Jmod},}
&=
\xi \wedge J[(F_0)_*(\rho\eta)]
=
\xi \wedge F_*\eta.
\end{align*}
\end{proof}

\subsubsection{Property~\ref{opushfiberprod}}

We start with a series of three background lemmas. They lead up to Lemma~\ref{lm:partonfib}, which establishes a property of partitions of unity, which in turn is useful for describing the push-forward of differential forms on fiber products.

\begin{lm}\label{lm:top1}
Let
$f:X\to W$, $g:Y\to W$, and $s,t:Z\to W$ be continuous maps between topological spaces
such that $s\times t:Z\to W\times W$ is proper.
Let
$X\prescript{}{f}\times_s^{} Z\prescript{}{t}\times_g^{} Y$
be the fiber product in \textbf{Top}.
Let $A\subset X$ and $B\subset Y$ be compact subsets. Then $A\prescript{}{f}\times_s^{} Z \prescript{}{t}\times_g^{} B$ is compact.
\end{lm}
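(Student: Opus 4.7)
My plan is to reduce the statement to Lemma~\ref{lm:csfp} by reinterpreting the iterated fiber product as a single fiber product over the product base $W\times W$.

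The first step is the formal identification of $P:=A\prescript{}{f}\times_s^{} Z\prescript{}{t}\times_g^{} B$ with the binary fiber product $(A\times B)\prescript{}{f\times g}\times_{s\times t}^{} Z$ taken over $W\times W$, which puts the problem into the framework of Lemma~\ref{lm:csfp}. Applying that lemma requires producing compact subsets containing the two projections of $P$. The $A\times B$-side is immediate, since $A\times B$ is itself already compact. For the $Z$-side, continuity of $f$ and $g$ gives that $f(A)\times g(B)\subset W\times W$ is compact, and properness of $s\times t$ then gives that $Z_1:=(s\times t)^{-1}(f(A)\times g(B))$ is compact in $Z$. For any point $(a,z,b)\in P$ one has $(s(z),t(z))=(f(a),g(b))\in f(A)\times g(B)$, forcing $z\in Z_1$, so the $Z$-projection of $P$ lands in $Z_1$.

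I would then apply Lemma~\ref{lm:csfp} with $Y$, $Z$, $W$ there taken to be $A\times B$, $Z$, $W\times W$ respectively, and with its subset ``$A$'' taken to be all of $P$: its closedness hypothesis holds trivially since $P$ is closed in itself, and its boundedness hypothesis is supplied by the compact sets $A\times B$ and $Z_1$ produced in the previous paragraph. The conclusion of Lemma~\ref{lm:csfp} is that $P$ is compact. I expect the main---and essentially the only---obstacle to be this initial repackaging of the triple fiber product as a binary one over $W\times W$; once that is done, both hypotheses of Lemma~\ref{lm:csfp} are available for free, and no further topological input is needed beyond what is already encapsulated there.
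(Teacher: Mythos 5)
Your proof is correct and is essentially the paper's argument: both identify the key compact set $(s\times t)^{-1}(f(A)\times g(B))$ via continuity and properness, and both conclude by observing that the fiber product is a closed subset of a compact product (the paper does this directly, you route it through Lemma~\ref{lm:csfp}, whose proof is exactly that direct argument). The only point worth noting is that the closedness step — whether phrased as closedness of $P$ in $X\times Z\times Y$ or as the Hausdorff hypothesis of Lemma~\ref{lm:csfp} applied to $W\times W$ — uses that $W$ is Hausdorff, an assumption the lemma's statement omits but which both your proof and the paper's implicitly rely on (and which holds in the intended application, where $W$ is a manifold).
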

\begin{proof}
First, note that $A\prescript{}{f}\times_s^{} Z \prescript{}{t}\times_g^{} B$ is a closed subset of the direct product $X\times Z\times Y$.
Next,
\begin{align*}
A\prescript{}{f}\times_s Z \prescript{}{t}\times_g^{} B
&=
\left\{
(a,z,b)\in A\times Z\times B | f(a)=s(z),t(z)=g(b)
\right\} \\
&\subset
\left\{
(a,z,b)\in A\times Z\times B | s(z)\in f(A), t(z)\in g(B)
\right\} \\
&=
A\times (s\times t)^{-1}(f(A)\times g(B))\times B.
\end{align*}
By assumption, $A$ and $B$ are compact. By continuity, $f(A)\times g(B)$ is compact, and by properness, $(s\times t)^{-1}(f(A)\times g(B))$ is compact. Thus, the last line above is a compact subset of $X\times Z\times Y$. So, $A\prescript{}{f}\times_s^{} Z \prescript{}{t}\times_g^{} B$ is compact as a closed subset of a compact set.
\end{proof}

\begin{lm}\label{lm:fibprodclean}
Let $F:\X\to \mZ$, $G:\Y\to \mZ$, be transverse strongly smooth morphisms in $\EPG$ and let $\mP$ be the weak fiber product given by Lemma~\ref{lm:fibEPG}.
If $A\subset X_0$ and $B\subset Y_0$ are clean, then $A\prescript{}{F_0}\times_{s}^{} Z_1 \prescript{}{t}\times_{G_0}^{} B\subset P_0$ is clean.
\end{lm}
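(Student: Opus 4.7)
The plan is to verify cleanness directly from the definition. Set $C := A\prescript{}{F_0}\times_{s}^{} Z_1 \prescript{}{t}\times_{G_0}^{} B \subset P_0$, and write $s_\mP, t_\mP$ for the source and target maps of $\mP$ given by~\eqref{eq:s}. I would show that $t_\mP|_{s_\mP^{-1}(C)}$ is proper, i.e., for any compact subset $K\subset P_0$ the set $s_\mP^{-1}(C)\cap t_\mP^{-1}(K)$ is compact.

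First, I would unpack $s_\mP$: a triple $(f_1,f_2,f_3)\in P_1$ lies in $s_\mP^{-1}(C)$ if and only if $s(f_1)\in A$ and $s(f_3)\in B$, since the matching conditions on $f_2$ are already imposed by membership in $P_1$. In particular, since $A$ and $B$ are closed by Lemma~\ref{lm:cc}, both $s_\mP^{-1}(C)$ and $t_\mP^{-1}(K)$ are closed in $P_1$. Letting $K_1\subset X_0$ and $K_3\subset Y_0$ be the (compact) images of $K$ under the projections to the first and third factors, any element $(f_1,f_2,f_3)\in s_\mP^{-1}(C)\cap t_\mP^{-1}(K)$ satisfies $t(f_1)\in K_1$ and $t(f_3)\in K_3$, which yields the containment
\[
s_\mP^{-1}(C)\cap t_\mP^{-1}(K) \;\subset\; L_1\prescript{}{F_0\circ s}\times_s^{} Z_1\prescript{}{t}\times_{G_0\circ s}^{} L_3,
\]
where $L_1:=s^{-1}(A)\cap t^{-1}(K_1)\subset X_1$ and $L_3:=s^{-1}(B)\cap t^{-1}(K_3)\subset Y_1$. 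Here I use the functoriality identities $s\circ F_1=F_0\circ s$ and $s\circ G_1=G_0\circ s$ to match the fiber product structure of $P_1$ from Lemma~\ref{lm:fibEPG}.

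The cleanness of $A$ and $B$ means precisely that $t|_{s^{-1}(A)}$ and $t|_{s^{-1}(B)}$ are proper, so $L_1$ and $L_3$ are compact. Next I would invoke Lemma~\ref{lm:top1}, applied with $W=Z_0$, the maps $F_0\circ s:L_1\to Z_0$ and $G_0\circ s:L_3\to Z_0$, and the source-target map $s\times t:Z_1\to Z_0\times Z_0$, which is proper by the definition of an \'etale proper groupoid for $\mZ$. This shows the right-hand side of the displayed inclusion is compact. Hence $s_\mP^{-1}(C)\cap t_\mP^{-1}(K)$ is a closed subset of a compact set, and therefore compact.

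The main step is identifying the right compact ambient fiber product so that Lemma~\ref{lm:top1} applies cleanly; the computation is otherwise bookkeeping with the structure maps of Lemma~\ref{lm:fibEPG}, and I do not anticipate any deeper obstacle.
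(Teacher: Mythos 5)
Your proof is correct and follows essentially the same route as the paper: both arguments show that $s_\mP^{-1}(C)\cap t_\mP^{-1}(K)$ is a closed subset of a fiber product $L_1\prescript{}{s\circ F_1}\times_s^{} Z_1\prescript{}{t}\times_{s\circ G_1}^{} L_3$ with $L_1=s^{-1}(A)\cap t^{-1}(\pi_{10}(K))$ and $L_3=s^{-1}(B)\cap t^{-1}(\pi_{30}(K))$ compact by cleanness, and then apply Lemma~\ref{lm:top1} with the properness of $s\times t:Z_1\to Z_0\times Z_0$. The bookkeeping with the structure maps matches the paper's computation exactly.
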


\begin{proof}
Write for short 
$C:=A\prescript{}{F_0}\times_s^{} Z_1 \prescript{}{t}\times_{G_0}^{} B\subset P_0$.
Denote by $s_D,t_D,$ the source and target maps of $\D=\X,\Y,\mP$. Denote by $\pi_j$ the projection from $\mP$ to the $j$th component for $j=1,2,3$, with a second index to indicate the projection of objects of $\mP$ vs. morphisms (e.g., $\pi_{10}:P_0\to X_0$ and $\pi_{11}:P_1\to X_1$).
Let $K\subset P_0$ be a compact subset. We show that $t_P^{-1}(K)\cap s_P^{-1}(C)$ is compact.

First, by Lemma~\ref{lm:cc},
$A$ and $B$ are closed. Thus, $C$
and therefore $t_P^{-1}(K)\cap s_P^{-1}(C)$ is closed as well.
Next,
\begin{align*}
t_P^{-1}(K)\cap &s_P^{-1}(C)
\subset\\
&\subset
\big(\pi_{11}(t_P^{-1}(K))\prescript{}{s\circ F_1}\times_s^{} Z_1 \prescript{}{t}\times_{s\circ G_1}^{} \pi_{31}(t_P^{-1}(K))\big)
\cap
\big(s_X^{-1}(A)\prescript{}{s\circ F_1}\times_s^{} Z_1 \prescript{}{t}\times_{s\circ G_1}^{} s_Y^{-1}(B)\big)\\
&=
\big(\pi_{11}(t_P^{-1}(K))\cap s_X^{-1}(A)\big)
\prescript{}{s\circ F_1}\times_s^{} Z_1 \prescript{}{t}\times_{s\circ G_1}^{}
\big(\pi_{31}(t_P^{-1}(K))\cap s_Y^{-1}(B)\big)\\
&\subset
\big(t_X^{-1}(\pi_{10}(K))\cap s_X^{-1}(A)\big)
\prescript{}{s\circ F_1}\times_s^{} Z_1 \prescript{}{t}\times_{s\circ G_1}^{}
\big(t_Y^{-1}(\pi_{30}(K))\cap s_Y^{-1}(B)\big).
\end{align*}
By continuity, $\pi_1(K)$ and $\pi_3(K)$ are compact. From cleanness of $A$ and $B$ and by Lemma~\ref{lm:top1}, it follows that the last expression above is compact. In total, $t_P^{-1}(K)\cap s_P^{-1}(C)$ is compact.
\end{proof}

Let $X,Y,Z,W,V,$ be manifolds with corners and let
$h:Z\to V$, $k:W\to V$, $f:X\to Z$, $g:Y\to W$, be strongly smooth maps with $k\pitchfork h$ and $f,g,$ proper submersions. Let $o^f,o^g,$ be relative orientations for $f,g,$ respectively.
Denote by
\[
f\times g : X\prescript{}{h\circ f}\times_{k\circ g}^{} Y \to Z\prescript{}{h}\times_k^{} W
\]
the induced map,
and by
\begin{gather*}
\pi_X^Y:X \prescript{}{h\circ f}\times_{k\circ g}^{} Y \to X, \quad \pi_Y^X:X \prescript{}{h\circ f}\times_{k\circ g}^{} Y\to Y,\\
\pi_Z^W:Z \prescript{}{h}\times_{k}^{} W \to Z, \quad \pi_W^Z:Z \prescript{}{h}\times_{k}^{} W \to W,
\end{gather*}
the projections. Equip $f\times g$ with the induced relative orientation defined as follows. Note that
\[
\Id_X\times g: X\prescript{}{h\circ f}\times_{k\circ g}^{} Y \lrarr X\prescript{}{h\circ f}\times_{k}^{} W,
\qquad
f\times \Id_W: X\prescript{}{h\circ f}\times_{k}^{} W \lrarr Z\prescript{}{h}\times_{k}^{} W,
\]
satisfy
\[
f\times g = (f\times \Id_W)\circ (\Id_X\times g).
\]
Consider the following pull-back diagrams
\[
\xymatrix{
X\prescript{}{h\circ f}\times_{k}^{} Y \simeq X\prescript{}{f}\times_{\pi_Z^Y} (Z\prescript{}{h}\times_{k}^{} Y) \ar[r]^(.73){f\times \Id_Y}\ar[d]^{\pi^Y_X}& Z\prescript{}{h}\times_{k}^{} Y\ar[d]^{\pi_Z^Y},\\
X\ar[r]^{f} & Z
}
\qquad
\xymatrix{
X\prescript{}{h}\times_{k\circ g}^{} Y \simeq (X\prescript{}{h}\times_{k}^{} W)\prescript{}{\pi^X_W}\times_{g}^{} Y \ar[r]^(.8){\pi^X_Y}\ar[d]^{\Id_X\times g}& Y\ar[d]^{g}\\
X\prescript{}{h}\times_{k}^{} W\ar[r]^{\pi^X_W} & W.
}
\]
Equip $f\times \Id_Y$ with the transpose pull-back relative orientation $^t(\pi^Y_Z)^*o^f$ and equip $\Id_X \times g$ with the pull-back orientation $(\pi_W^X)^* o^g.$ The induced relative orientation of $f\times g$ is defined to be the composition $^t(\pi^Y_Z)^*o^f \circ (\pi_W^X)^* o^g.$
\begin{lm}\label{lm:pullfibprod}
For all $\a\in A^*(X)$ and $\beta\in A^*(Y)$,
\[
(f\times g)_*((\pi_X^Y)^*\a \wedge (\pi_Y^X)^*\beta)
=
(-1)^{\star}
(\pi_Z^W)^*f_*\a\wedge (\pi_W^Z)^*g_*\beta,
\]
with  $\star = \rdim f \cdot (|\beta| + \rdim k \circ g) .$
\end{lm}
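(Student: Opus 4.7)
The plan is to exploit the decomposition $f\times g = (f\times \Id_W)\circ(\Id_X\times g)$ used in defining the induced relative orientation of $f\times g$. By Proposition~\ref{prop:proppp}\eqref{prop:pushcomp}, the push-forward splits accordingly:
\[
(f\times g)_* = (f\times \Id_W)_* \circ (\Id_X\times g)_*.
\]
I will handle each factor separately, using that $\Id_X\times g$ carries the pull-back orientation from $o^g$, while $f\times \Id_W$ carries the transpose pull-back orientation from $o^f$.

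For the inner factor, note that $\pi_X^Y = \pi_X^W\circ(\Id_X\times g)$, so $(\pi_X^Y)^*\alpha = (\Id_X\times g)^*(\pi_X^W)^*\alpha$. The projection formula (Proposition~\ref{prop:proppp}\eqref{prop:pushpull}) yields
\[
(\Id_X\times g)_*\bigl((\pi_X^Y)^*\alpha\wedge (\pi_Y^X)^*\beta\bigr) = (\pi_X^W)^*\alpha\wedge (\Id_X\times g)_*(\pi_Y^X)^*\beta,
\]
and Proposition~\ref{prop:proppp}\eqref{prop:pushfiberprod}, applied to the pull-back square realizing $\Id_X\times g$ as the pull-back of $g$ along $\pi_W^X$, gives $(\Id_X\times g)_*(\pi_Y^X)^*\beta = (\pi_W^X)^* g_*\beta$. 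No sign enters at this stage, because the orientation is a straight (not transpose) pull-back.

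For the outer factor, Lemma~\ref{lm:flip} applied to the pull-back square expressing $f\times \Id_W$ as the pull-back of $f$ along $\pi_Z^W$ gives
\[
(f\times \Id_W)_*(\pi_X^W)^*\alpha = (-1)^{\rdim f \cdot \rdim \pi_Z^W}(\pi_Z^W)^* f_*\alpha.
\]
Since $(\pi_W^X)^* g_*\beta = (f\times \Id_W)^*(\pi_W^Z)^* g_*\beta$, one commutes factors via Koszul, applies the projection formula, invokes the displayed identity, and finally commutes once more to match the stated right-hand side. The resulting cumulative sign exponent is
\[
|\alpha|(|\beta|-\rdim g) + \rdim f\cdot\rdim \pi_Z^W + (|\beta|-\rdim g)(|\alpha|-\rdim f),
\]
which, after noting that $\rdim \pi_Z^W = \dim W - \dim V = \rdim(k\circ g) - \rdim g$, reduces mod $2$ to $\rdim f\cdot(|\beta| + \rdim k\circ g)$, as required. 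The only delicate aspect of the proof is this sign bookkeeping; all other steps are direct applications of Proposition~\ref{prop:proppp} and Lemma~\ref{lm:flip}.
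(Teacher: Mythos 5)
Your proof is correct and follows essentially the same route as the paper: both decompose $f\times g=(f\times \Id_W)\circ(\Id_X\times g)$, handle the $\Id_X\times g$ factor sign-free via the projection formula and Proposition~\ref{prop:proppp}\eqref{prop:pushfiberprod}, and handle the $f\times\Id_W$ factor via Lemma~\ref{lm:flip} plus Koszul swaps. Your sign bookkeeping, including the identification $\rdim \pi_Z^W=\rdim(k\circ g)-\rdim g$, reduces correctly to the stated exponent $\star$.
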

\begin{proof}
By Proposition~\ref{prop:proppp}\eqref{prop:pushcomp}, it is enough to prove the claim for the case when one of the maps is the identity, since then
\begin{multline*}
(f\times g)_*((\pi_X^Y)^*\a \wedge (\pi_Y^X)^*\beta)
=
(f\times \Id_W)_*\circ (\Id_X\times g)_*((\pi_X^Y)^*\a \wedge (\pi_Y^X)^*\beta)
=\\
=
(f\times \Id_W)_*((\pi_X^Y)^*\a\wedge (\pi_W^Z)^*g_*\beta)
=
(-1)^{\rdim f \cdot (|\beta| + \rdim g+ \rdim k )}(\pi_Z^W)^*f_*\a\wedge (\pi_W^Z)^*g_*\beta.
\end{multline*}
Assume first
that $W=Y$ and $g=\Id_Y$.
Consider the following diagrams
\[
\xymatrix{
X\prescript{}{h\circ f}\times_{k}^{} Y \simeq X\prescript{}{f}\times_{\pi_Z^Y} (Z\prescript{}{h}\times_{k}^{} Y) \ar[r]^(.73){f\times \Id_Y}\ar[d]^{\pi^Y_X}& Z\prescript{}{h}\times_{k}^{} Y\ar[d]^{\pi_Z^Y}\\
X\ar[r]^{f} & Z ,
}
\qquad
\xymatrix{
X \prescript{}{h\circ f}\times_{k}^{} Y \ar[dr]^{\pi_Y^X}\ar[rr]^{f\times \Id_Y} & & Z\prescript{}{h}\times_{k}^{} Y \ar[dl]_{\pi_Y^Z}\\
& Y & .
}
\]
By Lemma~\ref{lm:flip} for the first diagram and commutativity of the second,
\[
(\pi_Z^Y)^*f_*\a=
(-1)^{\rdim f\cdot \rdim \pi^Y_Z}
(f\times \Id_Y)_*(\pi_X^Y)^*\a,
\qquad
(\pi_Y^X)^*\beta = (f\times \Id_Y)^*(\pi_Y^Z)^*\beta.
\]
By Proposition~\ref{prop:proppp}\eqref{prop:pushpull},
\begin{align*}
(f\times g)_*((\pi_X^Y)^*\a \wedge (\pi_Y^X)^*\beta)
&=
(f\times \Id_Y)_*((\pi_X^Y)^*\a \wedge (f\times \Id_Y)^*(\pi_Y^Z)^*\beta)\\
&=
(-1)^{|\a||\beta|}
(f\times \Id_Y)_*((f\times \Id_Y)^*(\pi_Y^Z)^*\beta\wedge (\pi_X^Y)^*\a)\\
&=
(-1)^{|\a||\beta|}
(\pi_Y^Z)^*\beta\wedge (f\times \Id_Y)_*(\pi_X^Y)^*\a\\
&=
(-1)^{|\beta|(|\a|+|(f\times \Id_Y)_*(\pi_X^Y)^*\a|)}
(f\times \Id_Y)_*(\pi_X^Y)^*\a \wedge (\pi_Y^Z)^*\beta\\
&=
(-1)^{|\beta|(|\a|+|f_*\a|)+\rdim f\cdot \rdim \pi^Y_Z}
(\pi_Z^Y)^*f_*\a\wedge (\pi_Y^Z)^*(\Id_Y)_*\beta.
\end{align*}
The case $f=\Id_X$ is proved similarly, except no signs arise.
The diagrams
\[
\xymatrix{
X\prescript{}{h}\times_{k\circ g}^{} Y \simeq (X\prescript{}{h}\times_{k}^{} W)\prescript{}{\pi^X_W}\times_{g}^{} Y \ar[r]^(.8){\pi^X_Y}\ar[d]^{\Id_X\times g}& Y\ar[d]^{g}\\
X\prescript{}{h}\times_{k}^{} W\ar[r]^{\pi^X_W} & W ,
}
\qquad
\xymatrix{
X \prescript{}{h}\times_{k\circ g}^{} Y \ar[dr]^{\pi^Y_X}\ar[rr]^{\Id_X\times g} & & X\prescript{}{h}\times_{k}^{} W \ar[dl]_{\pi^W_X}\\
& X & .
}
\]
give
\begin{align*}
(f\times g)_*((\pi_X^Y)^*\a \wedge (\pi_Y^X)^*\beta)
&=
(\Id_X\times g)_*((\Id_X\times g)^*(\pi_X^W)^*\a \wedge (\pi_Y^X)^*\beta)\\
&=
(\pi_X^W)^*\a \wedge (\Id_X\times g)_*(\pi_Y^X)^*\beta\\
&=
(\pi_X^W)^*(\Id_X)_*\a\wedge (\pi_W^X)^*g_*\beta.
\end{align*}
\end{proof}

\begin{lm}\label{lm:partonfib}
Let~\eqref{eq:fibprod} be a weak pull-back diagram in $\EPG$  with $\mP=\X\prescript{}{F}\times_{G}^{}\Y$ as in Lemma~\ref{lm:fibEPG}. Let $\rho_X, \rho_Y,$ be partitions of unity on $\X,\Y,$ respectively. Then $\rho:=A_{10}^*\rho_X\cdot A_{20}^*\rho_Y$ is a partition of unity on $\mP$.
\end{lm}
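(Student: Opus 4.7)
The plan is to verify the two defining properties of a partition of unity in turn. For the clean-support condition, I would first observe
\[
\supp(\rho) \subseteq A_{10}^{-1}(\supp \rho_X) \cap A_{20}^{-1}(\supp \rho_Y) = \supp(\rho_X) \prescript{}{F_0}\times_s^{} Z_1 \prescript{}{t}\times_{G_0}^{} \supp(\rho_Y).
\]
Since $\supp(\rho_X)$ and $\supp(\rho_Y)$ are clean by hypothesis, Lemma~\ref{lm:fibprodclean} makes the right-hand side a clean subset of $P_0$. Since $\supp(\rho)$ is closed and contained in this clean set, Lemma~\ref{lm:clsc} yields that $\supp(\rho)$ itself is clean.

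For the normalization condition $t_{P*} s_P^* \rho = 1$, the idea is to factor the computation along the two factors of the fiber product. The key observation is that for $p = (x, z, y) \in P_0$, the fiber $t_P^{-1}(p)$ is canonically identified with the product $t_X^{-1}(x) \times t_Y^{-1}(y)$: given $(f_1, f_3)$ with $t(f_1) = x$ and $t(f_3) = y$, the middle morphism $f_2 \in Z_1$ of the triple $(f_1, f_2, f_3) \in P_1$ is uniquely determined by $t_P(f_1, f_2, f_3) = p$, namely $f_2 = G_1(f_3)^{-1} \circ z \circ F_1(f_1)$. More conceptually, this produces a commutative diagram
\[
\xymatrix{
P_1 \ar[r]^{(A_{11}, A_{21})} \ar[d]^{t_P} & X_1 \times Y_1 \ar[d]^{t_X \times t_Y} \\
P_0 \ar[r]^{(A_{10}, A_{20})} & X_0 \times Y_0
}
\]
which I claim is a pull-back square, the vertical arrows being local diffeomorphisms. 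Since $s_P^*\rho$ is a $0$-form with clean support on which $t_P$ is proper by Lemma~\ref{lm:sgc}, the push-forward $(t_P)_* s_P^*\rho$ with respect to the canonical relative orientation is the pointwise sum over fibers, and using the product structure of $\rho$ this factors as
\[
(t_P)_* s_P^*\rho(p) = \sum_{f_1 \in t_X^{-1}(x)} \rho_X(s(f_1)) \cdot \sum_{f_3 \in t_Y^{-1}(y)} \rho_Y(s(f_3)) = (t_{X*}s^*\rho_X)(x) \cdot (t_{Y*}s^*\rho_Y)(y) = 1 \cdot 1 = 1.
\]

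The main obstacle is mostly bookkeeping: verifying that the fiber identification above is indeed a diffeomorphism compatible with the canonical orientations of the structure maps, and justifying the splitting of the sum, which follows from the product structure of $\supp(\rho)$ and its cleanness. A more formal alternative would bypass the pointwise argument by invoking Proposition~\ref{prop:proppp}~\eqref{prop:pushfiberprod} on the pull-back square together with the direct-product case of Lemma~\ref{lm:pullfibprod} applied to $t_X \times t_Y$; however, this requires extending those statements to cleanly supported forms, since $t_X \times t_Y$ is not globally proper but only proper when restricted to clean sets, so the pointwise computation is arguably the cleanest route.
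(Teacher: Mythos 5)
Your proposal is correct. The clean-support step is identical to the paper's: Lemma~\ref{lm:fibprodclean} applied to $\supp(\rho_X),\supp(\rho_Y)$, plus Lemma~\ref{lm:clsc}. For the normalization $t_{P*}s_P^*\rho=1$, however, you take a genuinely more elementary route than the paper. The paper writes $s_P^*\rho = \pi_{11}^*(s_X^*\rho_X)\wedge \pi_{21}^*1\wedge \pi_{31}^*(s_Y^*\rho_Y)$ and invokes Lemma~\ref{lm:pullfibprod} repeatedly for $t_P = t_X\times\Id_{Z_1}\times t_Y$, so that the push-forward factors as $\pi_{10}^*\big((t_X)_*s_X^*\rho_X\big)\wedge\pi_{20}^*1\wedge\pi_{30}^*\big((t_Y)_*s_Y^*\rho_Y\big)=1$; your pointwise identification $t_P^{-1}(p)\cong t_X^{-1}(x)\times t_Y^{-1}(y)$ (with $f_2$ uniquely determined by $z$, $F_1(f_1)$, $G_1(f_3)$) is exactly the fiber-level content of that factorization, specialized to degree-zero forms where push-forward along a local diffeomorphism is a locally finite sum over fibers. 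Your version buys concreteness and avoids orientation bookkeeping entirely (signs are irrelevant for functions with the canonical relative orientation), while the paper's version reuses a lemma it needs anyway for Property~\ref{opushfiberprod} and works uniformly in all degrees. Your caveat about $t_X\times t_Y$ being proper only on clean sets is well taken but not an obstruction: the paper's own application of Lemma~\ref{lm:pullfibprod} here is likewise implicitly restricted to the (clean) supports, in the same way $J=t_*s^*$ is defined throughout; the formulas of Proposition~\ref{prop:proppp} and Lemma~\ref{lm:pullfibprod} are local over the target and so hold whenever the maps are proper on the supports of the forms involved. So either route closes the argument.
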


\begin{proof}
By Lemma~\ref{lm:fibprodclean}, the map $\rho:P_0\to [0,1]$ is cleanly supported.
It is left to verify that $(t_P)_*(s_P)^*\rho=1$.

Write $s_D,t_D,$ for the source and target maps of $\D$ for $\D=\X,\Y,\mZ,\mP$. Let $\pi_1:\mP\to \X$, $\pi_2:\mP\to Z_1$, and $\pi_3:\mP\to \Y$, be the projection maps. We use a second index to indicate the projection of objects of $\mP$ vs. morphisms, e.g., $\pi_{10}:P_0\to X_0$ and $\pi_{11}:P_1\to X_1$. Then
\begin{align*}
(t_P)_*(s_P)^*(\pi_{10}^*\rho_X\wedge \pi_{30}^*\rho_Y)
&=
(t_P)_*(\pi_{11}^*s_X^*\rho_X\wedge \pi_{31}^*s_Y^*\rho_Y),\\
&=
(t_P)_*(\pi_{11}^*(s_X^*\rho_X)\wedge
\pi_{21}^*1\wedge \pi_{31}^*(s_Y^*\rho_Y)),\\
\shortintertext{by Lemma~\ref{lm:pullfibprod} applied repeatedly to $t_P=t_X\times \Id_{Z_1}\times t_Y$,}
&=
\pi_{10}^*((t_X)_*s_X^*\rho_X)\wedge
\pi_{20}^* 1 \wedge
\pi_{30}^*((t_Y)_*s_Y^*\rho_Y)\\
&=
1.
\end{align*}
\end{proof}

We are now ready to prove the integration property.

\begin{lm}\label{lm:ppepg}
	Let~\eqref{eq:fibprod} be a weak pull-back diagram in $\EPG$, where $G$ is a proper submersion with relative orientation $o^G.$ Equip $A_1$ with the pull-back relative orientation $F^*o^G.$ Let $\xi\in A^*(\Y).$ Then
		$
		(A_1)_*(A_2)^*\xi=F^*G_*\xi.
		$
\end{lm}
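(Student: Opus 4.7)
The plan is to reduce the identity $(A_1)_*A_2^*\xi = F^*G_*\xi$ in $\EPG{}$ to a computation with manifold forms by expanding the EPG push-forwards via Definition~\ref{dfn:pushepg} and exploiting the product structure of partitions of unity from Lemma~\ref{lm:partonfib}. Namely, choose partitions of unity $\rho_X$ on $\X$ and $\rho_Y$ on $\Y,$ so that $\rho := A_{10}^*\rho_X \cdot A_{20}^*\rho_Y$ is a partition of unity on $\mP.$

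Unwinding the left-hand side and pulling $\rho_X$ out via the projection formula (Proposition~\ref{prop:proppp}\eqref{prop:pushpull}) applied at the manifold level to $A_{10}$, we obtain
\[
(A_1)_*A_2^*\xi = J\bigl[(A_{10})_*\bigl(A_{10}^*\rho_X\cdot A_{20}^*(\rho_Y\xi)\bigr)\bigr] = J\bigl[\rho_X\cdot(A_{10})_*A_{20}^*(\rho_Y\xi)\bigr].
\]
On the other hand, $F^*G_*\xi = F_0^*(t_\mZ)_*(s_\mZ)^*(G_0)_*(\rho_Y\xi) \in A^*(\X),$ where membership uses Lemma~\ref{lm:pullpush}. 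Since $J\circ K = \Id$ by Lemma~\ref{lm:JKinv}, we have $J[\rho_X\cdot\eta] = \eta$ for every $\eta\in A^*(\X),$ so it suffices to establish the manifold-level identity
\[
(A_{10})_*A_{20}^*(\rho_Y\xi) = F_0^*(t_\mZ)_*(s_\mZ)^*(G_0)_*(\rho_Y\xi).
\]

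The heart of the proof, and the main obstacle, is that $P_0 = X_0\prescript{}{F_0}\times_{s_\mZ}Z_1\prescript{}{t_\mZ}\times_{G_0}Y_0$ is not the naive manifold fiber product over $Z_0$ but carries an extra $Z_1$ factor encoding the $2$-categorical universal property. I address this by factoring $A_{10}$ as $P_0 \xrightarrow{p} Q \xrightarrow{\gamma} X_0,$ where $Q := X_0\prescript{}{F_0}\times_{s_\mZ}Z_1,$ the map $p$ is the pullback of $G_0$ along the composition $Q \xrightarrow{q} Z_1 \xrightarrow{t_\mZ} Z_0,$ and $\gamma$ is the pullback of the local diffeomorphism $s_\mZ$ along $F_0$. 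Applying Proposition~\ref{prop:proppp}\eqref{prop:pushfiberprod} to the first Cartesian square gives $p_*A_{20}^*(\rho_Y\xi) = q^*t_\mZ^*(G_0)_*(\rho_Y\xi),$ and applying it to the second (with $\gamma$ carrying the canonical relative orientation by Lemma~\ref{lm:pb&c}) gives $\gamma_*q^* = F_0^*(s_\mZ)_*.$ Chaining these yields $(A_{10})_*A_{20}^*(\rho_Y\xi) = F_0^*(s_\mZ)_*t_\mZ^*(G_0)_*(\rho_Y\xi),$ and the final swap $(s_\mZ)_*t_\mZ^* = (t_\mZ)_*(s_\mZ)^*$ on $\Acl^*(Z_0)$ is exactly Lemma~\ref{lm:Jwd}\eqref{item:tst} applied to $\mZ.$ The remaining bookkeeping is to verify that the pull-back relative orientation $F^*o^G$ of $A_{10}$ from Section~\ref{ssec:fpo} agrees throughout with $o_c^\gamma \circ q^*t_\mZ^*o^{G_0},$ which is immediate from its construction together with Lemma~\ref{lm:pb&c}.
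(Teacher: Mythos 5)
Your proposal is correct and follows essentially the same route as the paper's proof: the same product partition of unity from Lemma~\ref{lm:partonfib}, the same projection-formula step to strip off $\rho_X$ and invoke $J\circ K=\Id$, the same factorization of $A_{10}$ through $X_0\prescript{}{F_0}\times_{s}Z_1$ with Proposition~\ref{prop:proppp}\eqref{prop:pushfiberprod} applied to the Cartesian squares of diagram~\eqref{diag:fibprod0}, and the same final swap $s_*t^*=t_*s^*$ via Lemma~\ref{lm:Jwd}\eqref{item:tst}. The only cosmetic difference is that you merge the two upper squares of the paper's diagram into a single rectangle before applying the base-change formula.
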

\begin{proof}
Writing explicitly the weak fiber product as in Lemma~\ref{lm:fibEPG}, we get the following commutative diagram,
\begin{equation}\label{diag:fibprod0}
\xymatrix{
X_0\prescript{}{F_0}\times_{s}^{}Z_1\prescript{}{t}\times_{G_0}^{} Y_0
\ar[r]^(.6){B^1_0}\ar[d]^{C^1_0}\ar@/_30pt/[dd]_{A_{10}} \ar@/^30pt/[rr]^{A_{20}}&
Z_1\prescript{}{t}\times_{G_0}^{} Y_0 \ar[r]^(.6){B^2_0}\ar[d]^{T} & Y_0\ar[d]^{G_0} \\
X_0\prescript{}{F_0}\times_{s}^{} Z_1 \ar[r]^(.6){S}\ar[d]^{C^2_0} & Z_1 \ar[d]^{s}\ar[r]^{t} & Z_0\\
X_0 \ar[r]^{F_0} & Z_0&
}
\end{equation}
where each of the three squares is Cartesian in the category of manifolds with corners, and the right vertical arrow in each is a submersion, while $G_0$ and $T$ are also proper. The relative orientation of $s$ is the canonical one, the relative orientations of $T, C_0^1,$ and $C_0^2,$ are given by pull-back, and $o^{A_{10}} := o^{C_0^2}\circ o^{C_0^1}$.

Let $\rho_X$ and $\rho_Y$ be partitions of unity on $\X,\Y,$ respectively. In view of Lemma~\ref{lm:partonfib}, we take $\rho:=(A_{10})^*\rho_X\cdot (A_{20})^*\rho_Y:P_0\to [0,1]$, a partition of unity on $\mP$.
For $\xi\in A^*(\Y)$, we have
\begin{align*}
(A_{1})_* A_{2}^* \xi
&=
(A_1)_*A_{20}^*\xi \\
&=
J[(A_{10})_*(\rho\cdot A_{20}^*\xi)]\\
&=
J[(A_{10})_*(A_{10}^*\rho_X\cdot A_{20}^*\rho_Y\cdot A_{20}^*\xi)]\\
&=
J[(A_{10})_*(A_{10}^*\rho_X\cdot A_{20}^*(\rho_Y\cdot\xi))]\\
\shortintertext{By Proposition~\ref{prop:proppp}\eqref{prop:pushpull},}
&=
J[\rho_X\cdot (A_{10})_*A_{20}^*(\rho_Y\cdot\xi)]\\
&=
JK((A_{10})_*A_{20}^*(\rho_Y\cdot\xi))\\
&=
(A_{10})_*A_{20}^*(\rho_Y\cdot\xi)\\
&=
(C^2_0)_*(C^1_0)_*(B^1_0)^*(B^2_0)^*(\rho_Y\cdot\xi)\\
\shortintertext{by Proposition~\ref{prop:proppp}\eqref{prop:pushfiberprod},}
&=
(C^2_0)_*S^*T_*(B^2_0)^*(\rho_Y\cdot\xi)\\
&=
F_0^*s_*t^*(G_0)_*(\rho_Y\cdot\xi)\\
\shortintertext{by Lemma~\ref{lm:Jwd}\eqref{item:tst},}
&=
F_0^*t_*s^*(G_0)_*(\rho_Y\cdot\xi)\\
\shortintertext{by Lemma~\ref{lm:Jwd}\eqref{item:kernelJ},}
&=
F^*J(G_0)_*K(\xi)\\
&=
F^*G_*\xi.
\end{align*}
\end{proof}

\subsubsection{Property~\ref{thmstokes}}

For $\X,\Y,$ in $\EPG$ and a strongly smooth $F:\X\to \Y$,
decompose $\d\X$ into vertical and horizontal parts $\d\X=\dv\X\sqcup\dh\X$, where
\[
(\dv\X)_j=\dv X_j, \quad (\dh \X)_j=\dh X_j, \quad j=0,1.
\]
They come with maps
\begin{equation}\label{eq:dvdh}
\iv_{\X}:\dv\X\lrarr \X, \qquad \ih_{\X}:\dh\X\lrarr \X,
\end{equation}
and the structure of an \'etale proper groupoids induced by restricting the structure maps of $\d\X$. Furthermore, the above maps come with relative orientations coming from the compositions $\dv\X\hookrightarrow \d\X \stackrel{i_\X}{\rarr}\X$ and $\dh\X\hookrightarrow \d\X \stackrel{i_\X}{\rarr}\X$.


\begin{lm}\label{lm:partonbd}
If $\rho$ is a partition of unity on $\X$, then $\rho_\d:=i_{\X}^*\rho$ is a partition of unity on $\d\X$.
For a strongly smooth $F:\X\to\Y$,
the map $\rho^v_\d:=(\iv_{\X})^*\rho$ is a partition of unity on $\dv\!\X$.
\end{lm}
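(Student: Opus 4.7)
The plan is to reduce both identities to the defining relation $t_*s^*\rho = 1$ on $\X$ by invoking base change (Proposition~\ref{prop:proppp}\eqref{prop:pushfiberprod}). The key geometric input is that the structure maps $s,t$ of $\X$ are local diffeomorphisms, which forces the relevant squares of boundary inclusions to be Cartesian and, by Lemma~\ref{lm:pb&c}, makes the pull-back relative orientations agree with the canonical ones on the induced boundary maps.

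For the first assertion, observe that the squares
\[
\xymatrix@C=3em{
\d X_1 \ar[r]^{i_{X_1}}\ar[d]_{\d s} & X_1 \ar[d]^{s}\\
\d X_0 \ar[r]_{i_{X_0}} & X_0
}
\qquad
\xymatrix@C=3em{
\d X_1 \ar[r]^{i_{X_1}}\ar[d]_{\d t} & X_1 \ar[d]^{t}\\
\d X_0 \ar[r]_{i_{X_0}} & X_0
}
\]
are Cartesian, since $s,t$ are \'etale and so every local boundary component of $X_0$ at $s(x_1)$ or $t(x_1)$ lifts uniquely to one at $x_1.$ Combining base change $(\d t)_* i_{X_1}^* = i_{X_0}^* t_*$ with commutativity $(\d s)^* i_{X_0}^* = i_{X_1}^* s^*,$
\[
(\d t)_*(\d s)^*\rho_\d = (\d t)_* i_{X_1}^* s^*\rho = i_{X_0}^* t_* s^*\rho = i_{X_0}^*(1) = 1.
\]
Clean support of $\rho_\d$ in $\d\X$ follows from that of $\rho$: for any compact $K\subset\d X_0,$
\[
(\d s)^{-1}(\supp\rho_\d)\cap(\d t)^{-1}(K)\subset i_{X_1}^{-1}\bigl(s^{-1}(\supp\rho)\cap t^{-1}(i_{X_0}(K))\bigr),
\]
and the right side is compact because $i_{X_1}$ is finite-to-one proper and $\supp\rho$ is clean, so the closed left side is compact as well.

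For the second assertion, let $j_k:\dv X_k\hookrightarrow\d X_k$ denote the open inclusions. The squares
\[
\xymatrix@C=3em{
\dv X_1 \ar[r]^{j_1}\ar[d]_{\dv s} & \d X_1 \ar[d]^{\d s}\\
\dv X_0 \ar[r]_{j_0} & \d X_0
}
\qquad
\xymatrix@C=3em{
\dv X_1 \ar[r]^{j_1}\ar[d]_{\dv t} & \d X_1 \ar[d]^{\d t}\\
\dv X_0 \ar[r]_{j_0} & \d X_0
}
\]
are Cartesian provided $\dv X_1 = (\d s)^{-1}(\dv X_0) = (\d t)^{-1}(\dv X_0).$ This identity holds since $s, t$ are local diffeomorphisms intertwining $F_0$ and $F_1$ with the structure maps of $\Y$, so pulling back a boundary defining function $b$ on $Y_0$ through these relations preserves the dichotomy between ``$b\circ F_j$ is boundary defining'' and ``$b\circ F_j\equiv 0.$'' A second application of Proposition~\ref{prop:proppp}\eqref{prop:pushfiberprod} then yields
\[
(\dv t)_*(\dv s)^*(\iv_{X_0})^*\rho = j_0^*(\d t)_*(\d s)^*\rho_\d = j_0^*(1) = 1,
\]
and clean support of $\rho^v_\d$ in $\dv\X$ is inherited from clean support of $\rho_\d$ in $\d\X$ via $(\dv s)^{-1}(A) = (\d s)^{-1}(A)$ for $A\subset\dv X_0.$

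The main obstacle is the Cartesian property for the second pair of squares, i.e., verifying that the vertical/horizontal decomposition of the boundary is respected by the groupoid structure maps in the precise form needed to identify $\dv X_1$ with $(\d s)^{-1}(\dv X_0).$ Once this geometric compatibility is in place, both assertions reduce to routine applications of base change.
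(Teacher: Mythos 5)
Your proposal is correct and follows essentially the same route as the paper: the paper also reduces the identity $t_*s^*\rho=1$ to the boundary via the Cartesian squares relating $s_\d,t_\d$ to $s,t$, applies Proposition~\ref{prop:proppp}\eqref{prop:pushfiberprod} to the $t$-square and commutativity to the $s$-square, and handles the vertical boundary by "a similar argument." Your write-up merely supplies details the paper leaves implicit, namely the clean-support check and the identification $\dv X_1=(\d s)^{-1}(\dv X_0)=(\d t)^{-1}(\dv X_0)$ needed for the second pair of Cartesian squares.
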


\begin{proof}
Denote by $s_{\d},t_{\d},$ the source and target maps of $\d\X$. Denote by $i_0$ and $i_1$ the inclusions $i_1:\d X_1\hookrightarrow X_1$ and $i_0:\d X_0\hookrightarrow X_0$, respectively.
Consider the pull-back diagrams
\[
\xymatrix{
\d X_1\ar[r]^{i_1}\ar[d]^{s_{\d}} & X_1\ar[d]^{s} \\
\d X_0 \ar[r]^{i_0} & X_0,
}\qquad
\xymatrix{
\d X_1\ar[r]^{i_1}\ar[d]^{t_{\d}} & X_1\ar[d]^{t} \\
\d X_0 \ar[r]^{i_0} & X_0.
}
\]
By the commutativity of the diagrams, Proposition~\ref{prop:proppp}\eqref{prop:pushfiberprod}, and the assumption $t_*s^*\rho=1$, we get
\begin{align*}
(t_{\d})_*(s_{\d})^*(i_0^*\rho)
=&
(t_{\d})_*i_1^*s^*\rho\\
=&
i_0^*t_*s^*\rho\\
=&
1.
\end{align*}
A similar argument goes through for $(\iv_\d)^*\rho$.
\end{proof}

As a first step in proving the integration property on $A^*(\X)$, note that its analogue holds on $\Acl^*(\X)$:
\begin{lm}\label{lm:StokesAc}
Let $F:\X\to \Y$ be a strongly smooth relatively oriented proper submersion in $\EPG$, let $m:=\dim \X$, and let $\xi\in \Acl^k(\X)$. Then
$d(F_*\xi)
=
F_*(d\xi)
+(-1)^{m+k}(F|_{\dv\!\X})_*(\xi|_{\dv\!\X})$.
\end{lm}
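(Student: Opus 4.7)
The plan is to lift the identity to the underlying space $X_0$, apply the classical Stokes' theorem (Proposition~\ref{stokes}) there, and then descend to classes in $\Acl^*(\Y).$ Concretely, choose any representative $\alpha\in\Acl^k(X_0)$ with $\xi=[\alpha]$. By definition of push-forward on cleanly supported forms, $F_*\xi=[(F_0)_*\alpha]$ and $F_*(d\xi)=[(F_0)_*(d\alpha)],$ and the exterior differential on $\Acl^*(\Y)$ is the one induced from $d$ on $\Acl^*(Y_0)$ (well-defined because $d$ commutes with the local-diffeomorphism push-forwards $s_*,t_*$). Applying Proposition~\ref{stokes} to the strongly smooth relatively oriented proper submersion $F_0:X_0\to Y_0$ yields
\[
d\bigl((F_0)_*\alpha\bigr)=(F_0)_*(d\alpha)+(-1)^{m+k}\bigl(F_0\circ \iv_{X_0}\bigr)_*\bigl((\iv_{X_0})^*\alpha\bigr),
\]
where the boundary contribution on the right is tacitly restricted to $\dv X_0$ and pushed forward, and carries the induced relative orientation.

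Taking classes in $\Acl^*(\Y)$ identifies the first two terms with $d(F_*\xi)$ and $F_*(d\xi)$ respectively. For the third term, the underlying map of $F|_{\dv\X}:\dv\X\to\Y$ is $F_0\circ \iv_{X_0},$ so $(F|_{\dv\X})_*[\beta]=[(F_0\circ\iv_{X_0})_*\beta]$ whenever $\beta\in\Acl^*(\dv X_0)$. It remains to check that the restriction $\xi\mapsto\xi|_{\dv\X}:=[(\iv_{X_0})^*\alpha]$ is a well-defined map $\Acl^*(\X)\to\Acl^*(\dv\X)$. For this, note that the source and target maps of $\X$ restrict to the source and target maps of $\dv\X,$ giving Cartesian squares
\[
\xymatrix{\dv X_1\ar[r]^{\iv_{X_1}}\ar[d]_{s_\d} & X_1\ar[d]^{s}\\ \dv X_0\ar[r]^{\iv_{X_0}} & X_0,}
\]
and similarly for $t.$ Base change along local diffeomorphisms (Proposition~\ref{prop:proppp}\eqref{prop:pushfiberprod}) gives $(\iv_{X_0})^*s_*=(s_\d)_*(\iv_{X_1})^*$ and likewise for $t,$ so $(\iv_{X_0})^*$ sends $\Im(s_*-t_*)$ into $\Im((s_\d)_*-(t_\d)_*).$ Cleanness is preserved by Lemmas~\ref{lm:proper} and~\ref{lm:clsc} applied to the proper local diffeomorphism $\iv_{X_0}$ restricted to closed strata. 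Thus the boundary term descends to $[(F_0\circ\iv_{X_0})_*(\iv_{X_0})^*\alpha]=(F|_{\dv\X})_*(\xi|_{\dv\X}),$ completing the identification.

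The main obstacle is mostly bookkeeping rather than a deep argument: one must check that each operation appearing in the classical Stokes' theorem for $F_0$ descends to the quotients $\Acl^*(\cdot)$, in a manner compatible with the relative orientations. The orientations match because the induced relative orientation on $F|_{\dv\X}$ is, on $\dv X_0,$ exactly the one $F_0\circ\iv_{X_0}$ acquires from $o^{F_0}\circ o^{i_{X_0}}$ as in Section~\ref{sssec:vhbd}, which is the orientation used in Proposition~\ref{stokes}. Once this is verified, the proof reduces to writing down the three-term identity above, observing that each term is the image in $\Acl^*(\Y)$ of the corresponding object on $X_0,$ and concluding by linearity of the quotient map.
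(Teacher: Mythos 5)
Your argument is correct and follows exactly the route the paper takes: the paper's entire proof is the one-line observation that the claim is immediate from Stokes' theorem (Proposition~\ref{stokes}) applied to $F_0$. You have simply made explicit the routine descent verifications (that $d$, the push-forwards, and restriction to $\dv$ are all compatible with passing to classes in $\Acl^*$) that the paper leaves implicit.
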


\begin{proof}
This is immediate from Stokes' theorem, Proposition~\ref{stokes}, for $F_0$.
\end{proof}

To deduce the result for $A^*(\X)$, we note the following two lemmas.
\begin{lm}
The exterior derivative $d$ descends to $\Acl^*(\X)$.
\end{lm}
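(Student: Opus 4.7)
The plan is to verify two things: first, that $d$ maps $\Acl^*(X_0)$ into itself, and second, that $d$ preserves the subspace $\Im(s_*-t_*) \subset \Acl^*(X_0)$, so that it descends to the cokernel $\Acl^*(\X)$.

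For the first point, I would observe that $\supp(d\alpha) \subseteq \supp(\alpha),$ so if $\supp(\alpha)$ is clean, then $\supp(d\alpha)$ is a closed subset of a clean set and is therefore clean by Lemma~\ref{lm:clsc}. Hence $d:\Acl^*(X_0)\to\Acl^*(X_0)$ is well-defined.

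For the second point, the key observation is that the source and target maps $s,t:X_1\to X_0$ are local diffeomorphisms, hence submersions with $0$-dimensional fibers. I would argue that the vertical boundary of $X_1$ with respect to $s$ (and similarly $t$) is empty: since $s$ is a local diffeomorphism, any boundary defining function $b$ at $s(x)$ in $X_0$ pulls back via $s$ to a boundary defining function at $x$, so every local boundary component of $X_1$ is horizontal. Applying Stokes' theorem (Proposition~\ref{stokes}) to $s$ and $t$ with the canonical relative orientations, the vertical boundary contributions vanish and we obtain
\[
d\circ s_* = s_*\circ d, \qquad d\circ t_* = t_*\circ d
\]
on $\Acl^*(X_1).$ Therefore, for any $\eta\in\Acl^*(X_1),$
\[
d\bigl((s_*-t_*)\eta\bigr) = (s_*-t_*)(d\eta) \in \Im(s_*-t_*).
\]

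The main obstacle, if any, is simply justifying that $\dv X_1 = \emptyset$ with respect to $s$ and $t$; once this is established, everything else is a direct application of Stokes' theorem together with Lemma~\ref{lm:clsc}. Since it follows from the definition of an \'etale groupoid that $s$ and $t$ are local diffeomorphisms, this is immediate from the characterization of horizontal boundary components in Section~\ref{sssec:vhbd}.
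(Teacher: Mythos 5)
Your proof is correct and follows essentially the same route as the paper: the paper's argument is exactly that $s,t$ are local diffeomorphisms with zero-dimensional fibers, so the vertical-boundary term in Stokes' theorem vanishes, $s_*,t_*$ commute with $d$, and hence $s_*-t_*$ is a chain map whose cokernel inherits $d$. Your additional observations (that $\supp(d\alpha)\subseteq\supp(\alpha)$ keeps $d$ within cleanly supported forms, and the explicit verification that $\dv X_1=\emptyset$) are correct elaborations of steps the paper leaves implicit.
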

\begin{proof}
Since $s,t,$ are local diffeomorphisms, their fibers are zero dimensional. By Stokes' theorem, this implies $s_*,t_*,$ commute with $d$, and so $s_*-t_*$ is a chain map. It follows that $d$ descends to the cokernel, $\Acl^*(\X)$.
\end{proof}

\begin{lm}\label{lm:dJK}
The exterior derivative $d$ commutes with $J,K$.
\end{lm}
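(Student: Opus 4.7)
The plan is to prove first that $d$ commutes with $J,$ and then deduce the corresponding statement for $K$ formally from Lemma~\ref{lm:JKinv}.

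For the commutativity with $J,$ recall that $J([\alpha]) = t_* s^* \alpha.$ Pull-back always commutes with $d,$ so I just need to check that $t_* d = d\, t_*$ on cleanly supported forms on $X_1.$ Since $t : X_1 \to X_0$ is a local diffeomorphism, its fibers are zero-dimensional, so $\dv X_1 = \emptyset$ (equivalently, $(t|_{\dv X_1})_* = 0$). Stokes' theorem (Proposition~\ref{stokes}) applied to $t$ thus gives $d\, t_* = t_* d$ with no boundary contribution, and the computation
\[
d J([\alpha]) = d\, t_* s^* \alpha = t_* s^* d\alpha = J([d\alpha]) = J(d[\alpha])
\]
shows $d \circ J = J \circ d.$

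For $K,$ I would invoke Lemma~\ref{lm:JKinv}, which states that $J$ and $K$ are inverse to one another, and use the identity $K = K J K.$ Then
\[
d K = (K J) \, d K = K\, (J\, d)\, K = K\,(d\, J)\,K = K \, d\, (J K) = K \, d,
\]
which gives the second commutation relation.

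There is no real obstacle here; the only point that requires a moment's care is the justification that push-forward along $t$ (and $s$) commutes with $d$ on cleanly supported forms. This uses the fact that $t$ is a local diffeomorphism together with Lemma~\ref{lm:sgc}, which ensures that $t$ is proper when restricted to the clean support so that Stokes' theorem applies and that the resulting pushed-forward form is again cleanly supported by Lemma~\ref{lm:pcc} (applied, e.g., via Lemma~\ref{lm:proper} with $F$ the identity on the relevant sets). Once that is in place, the rest is formal manipulation.
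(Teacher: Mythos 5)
Your proof is correct and follows essentially the same route as the paper: commutativity with $J = t_*s^*$ via Stokes' theorem and the vanishing of the fiberwise boundary of the local diffeomorphism $t$, followed by a formal deduction for $K$ from the fact that $J$ and $K$ are mutually inverse (Lemma~\ref{lm:JKinv}). The extra care you take about clean supports is already handled in the paper when $J$ and the descent of $d$ to $\Acl^*(\X)$ are set up, so nothing further is needed.
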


\begin{proof}
Since $t$ is a local diffeomorphism, and as such has no fiberwise boundary, $d$ commutes with $J=t_*s^*$.
Since $K$ is inverse to $J$, it follows that $d$ commutes with $K$ as well:
\[
d\circ J = J \circ d
\quad
\Lrarr
\quad
d = J \circ d\circ K
\quad
\Lrarr
\quad
K\circ d = d \circ K.
\]
\end{proof}

We are now ready to prove the integration property.
\begin{lm}\label{lm:stokesepg}
Let $F:\X\to \Y$ be a strongly smooth relatively oriented proper submersion in $\EPG$, let $m:=\dim \X$, and let $\xi\in A^k(\X)$. Then
$d(F_*\xi)
=
F_*(d\xi)
+(-1)^{m+k}(F|_{\dv\!\X})_*(\xi|_{\dv\!\X})$.
\end{lm}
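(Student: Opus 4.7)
The plan is to reduce the claim to the version already proved for cleanly supported forms, Lemma~\ref{lm:StokesAc}, by conjugating with the isomorphism $K : A^*(\X) \to \Acl^*(\X)$ and its inverse $J$. Concretely, by Definition~\ref{dfn:pushepg} we have $F_*\xi = J\,F_*\,K\xi$, so applying the exterior derivative and invoking Lemma~\ref{lm:dJK} gives
\[
d(F_*\xi) \;=\; d\,J\,F_*\,K\xi \;=\; J\,d(F_*\,K\xi).
\]
Since $K\xi \in \Acl^k(\X)$, Lemma~\ref{lm:StokesAc} yields
\[
d(F_*\,K\xi) \;=\; F_*\bigl(d\,K\xi\bigr) + (-1)^{m+k}\bigl(F|_{\dv\X}\bigr)_*\bigl((K\xi)|_{\dv\X}\bigr),
\]
and applying $J$ and using $d\circ K = K\circ d$ (Lemma~\ref{lm:dJK}) turns the first summand into
\[
J\,F_*\,K(d\xi) \;=\; F_*(d\xi),
\]
as required.

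The key remaining step---and the only subtle point---is to identify $J\,(F|_{\dv\X})_*\bigl((K\xi)|_{\dv\X}\bigr)$ with $(F|_{\dv\X})_*(\xi|_{\dv\X})$, where the push-forward on the right is the one coming from Definition~\ref{dfn:pushepg} applied to $F|_{\dv\X}$. For this I would fix a partition of unity $\rho$ on $\X$ and compute $K\xi = [\rho\,\xi]$, so that
\[
(K\xi)\big|_{\dv\X} \;=\; \bigl[(\iv_{X_0})^*(\rho\,\xi)\bigr] \;=\; \bigl[\rho_\d^v \cdot \xi|_{\dv X_0}\bigr],
\]
where $\rho_\d^v := (\iv_\X)^*\rho$. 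By Lemma~\ref{lm:partonbd}, $\rho_\d^v$ is a partition of unity on $\dv\X$, so the right-hand side is precisely $K'(\xi|_{\dv\X})$ for the map $K'$ on $\dv\X$ defined via $\rho_\d^v$. By Lemma~\ref{lm:JKinv}, $K'$ is inverse to $J$ and hence independent of the choice of partition, so $K' = K$. Plugging this in gives
\[
J\,\bigl(F|_{\dv\X}\bigr)_*\bigl((K\xi)|_{\dv\X}\bigr) \;=\; J\,\bigl(F|_{\dv\X}\bigr)_*\,K\,(\xi|_{\dv\X}) \;=\; \bigl(F|_{\dv\X}\bigr)_*(\xi|_{\dv\X})
\]
by the very definition of push-forward on $A^*$. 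Assembling the two summands produces the desired identity.

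The main obstacle---really the only non-bookkeeping step---is verifying compatibility of the isomorphism $K$ with restriction to the fiberwise boundary, namely $(K\xi)|_{\dv\X} = K(\xi|_{\dv\X})$. This is exactly what Lemma~\ref{lm:partonbd} is designed to supply, together with the partition-independence of $K$ from Lemma~\ref{lm:JKinv}; everything else is a routine commutation of $d$ with $J$ and $K$.
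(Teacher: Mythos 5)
Your proposal is correct and follows essentially the same route as the paper: apply Lemma~\ref{lm:StokesAc} to $K\xi$, commute $d$ past $J$ and $K$ via Lemma~\ref{lm:dJK}, and identify $(K\xi)|_{\dv\X}$ with $K(\xi|_{\dv\X})$ using Lemma~\ref{lm:partonbd} together with the partition-independence of $K$ from Lemma~\ref{lm:JKinv}. The paper's proof is the same computation, only stated more tersely at the final identification step, which you have correctly singled out as the one non-trivial point.
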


\begin{proof}
Applying Lemma~\ref{lm:StokesAc} to $K\xi$ and using Lemma~\ref{lm:dJK}, we get
\begin{align*}
d((F_0)_*(K\xi))
=&
(F_0)_*(d(K\xi))
+(-1)^{m+k}(F_0|_{\dv\!X_0})_*((K\xi)|_{\dv\!\X})\\
=&
(F_0)_*(K (d\xi))
+(-1)^{m+k}(F_0|_{\dv\!X_0})_*((K\xi)|_{\dv\!\X}).
\end{align*}
Applying $J$ on the left and using Lemma~\ref{lm:dJK} again we get
\[
d(J(F_0)_*(K\xi))
=
J(F_0)_*(K (d\xi))
+(-1)^{m+k}J(F_0|_{\dv\!X_0})_*((K\xi)|_{\dv\!\X}),
\]
Note that Lemma~\ref{lm:partonbd} implies
$(K\xi)|_{\dv\!\X}=[\rho|_{\dv\X}\cdot\xi|_{\dv\X}]$, where $\rho$ is a partition of unity on $\X$.
Thus, we get
\[
dF_*(\xi)
=
F_*(d\xi)
+(-1)^{m+k}(F|_{\dv\!\X})_*(\xi|_{\dv\!\X}),
\]
as desired.
\end{proof}

\subsection{Integration properties in Orb}
The objective of this section is to prove Theorem~\ref{thm:main}.
Note that with our convention~\eqref{eq:dfnpush}, Definition~\ref{dfn:push} reads, for $f=F|R$,
\[
f^* =R_*F^*, \qquad f_* = F_*R^*.
\]

\subsubsection{Proof of Theorem~\ref{thm:main}\ref{nt}}
Write $f=F|R$, $g=G|Q$, and let the following diagram represent the 2-morphism $\a:f\Rarr g$.
\[
\xymatrix{
& \X' \ar[dr]_R \ar[drrr]^F\\
\X''' \ar[ur]_{T_1} \ar@{}[rr]|(.56){\Downarrow\, \alpha_1} \ar[dr]^{T_2} & & \X \ar@{}[rr]|(.5){\Downarrow\, \alpha_2} && \Y \\
& \X'' \ar[ur]^Q \ar[urrr]_G
}
\]
By Lemmas~\ref{lm:JFK},~\ref{lm:compepg}, and~\ref{lm:2mor}, we have
\[
g^*
=
Q_*G^*
=
Q_*(T_2)_*T_2^*G^*
=
R_*(T_1)_*T_1^*F^*
=
R_*F^*
=f^*.
\]
Similarly,
\[
g_*
=
G_*Q^*
=
G_* (T_2)_*T_2^*Q^*
=
F_*(T_1)_*T_1^*R^*
=
F_*R^*
=f_*.
\]

\subsubsection{Composition of morphisms}\label{sssec:comp}
For the next two parts of Theorem~\ref{thm:main}, we describe more explicitly how composition works in $\Orb$.
\begin{lm}\label{lm:cartprop}
Let $F:\X\to \mZ$, $G:\Y\to \mZ$, be transverse strongly smooth morphisms in $\EPG$, and let $\mP:=\X\times_{\mZ}\Y$ be their weak fiber product described in Lemma~\ref{lm:fibEPG}.
Let $A_1,A_2,$ be the projections as in diagram~\eqref{eq:fibprod}.
Then
\begin{enumerate}
\item
    If $G_0$ is a local diffeomorphism, then so is $A_{10}$.
\item
    If $G$ is essentially surjective, then so is $A_1$.
\item
    If $G$ is fully faithful, then so is $A_1$.
\item
    If $G$ is a refinement, then so is $A_1$.
\end{enumerate}
\end{lm}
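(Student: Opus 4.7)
The plan is to analyze the projection $A_1=(A_{10},A_{11})$ by decomposing it as a composition of pullbacks and then invoking the defining properties of $G$. Throughout, I use the explicit description of $\mP$ from Lemma~\ref{lm:fibEPG}.

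For part (1), I factor $A_{10}$ as
\[
P_0=(X_0\prescript{}{F_0}\times_s^{} Z_1)\prescript{}{t}\times_{G_0}^{} Y_0 \lrarr X_0\prescript{}{F_0}\times_s^{} Z_1 \lrarr X_0,
\]
where the right map is the pullback of $s: Z_1 \to Z_0$ along $F_0$, and the left map is the pullback of $G_0$ along $t\circ \mathrm{pr}_{Z_1}$. The structure map $s$ is a local diffeomorphism by the definition of \EPG{}, and local diffeomorphisms of manifolds with corners are stable under pullback along any strongly smooth map. Hence both projections are local diffeomorphisms as soon as $G_0$ is, and so $A_{10}$ is a local diffeomorphism. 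The identical factorization at the morphism level shows $A_{11}$ is also a local diffeomorphism, proving~(1). For part~(2), given $x\in X_0$, essential surjectivity of $G$ applied to $F_0(x)\in Z_0$ produces $y\in Y_0$ and a morphism $z:F_0(x)\to G_0(y)$ in $Z_1$. Then $(x,z,y)\in P_0$ satisfies $A_{10}(x,z,y)=x$, so $A_1$ is surjective on objects and in particular essentially surjective.

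For part~(3), I unwind the source and target maps of $P_1$ given in Lemma~\ref{lm:fibEPG}. A morphism in $\mP$ from $p_1=(x_1,z_1,y_1)$ to $p_2=(x_2,z_2,y_2)$ is a triple $(f_1,z_1,f_3)\in P_1$ whose target equals $p_2$; by equation~\eqref{eq:s} this amounts to a pair $(f_1,f_3)\in \Hom_\X(x_1,x_2)\times \Hom_\Y(y_1,y_2)$ satisfying the naturality relation $G_1(f_3)\circ z_1=z_2\circ F_1(f_1)$ in $Z_1$. Under $A_{11}$ this pair projects to $f_1$. Since $G$ is fully faithful, $G_1$ induces a bijection $\Hom_\Y(y_1,y_2)\overset{\sim}{\lrarr}\Hom_\mZ(G_0(y_1),G_0(y_2))$, so for each $f_1\in \Hom_\X(x_1,x_2)$ there is a unique $f_3$ with $G_1(f_3)=z_2\circ F_1(f_1)\circ i(z_1)$. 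This yields the required bijection on hom-sets, proving full faithfulness of $A_1$. Part~(4) is then immediate from~(1)--(3), since a refinement is by definition a local diffeomorphism and an equivalence of categories, and an equivalence is precisely a fully faithful, essentially surjective functor.

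The only subtle point is correctly identifying morphisms in $P_1$ in part~(3), since the target formula involves the derived component $z_2=G_1(f_3)\circ z_1\circ i(F_1(f_1))$. Once this is made explicit, each assertion reduces to either a standard pullback stability fact or a direct application of the defining categorical property of $G$, so I do not expect serious difficulty.
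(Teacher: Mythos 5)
Your proposal is correct and follows essentially the same route as the paper: part (1) via the two-step pullback factorization of $A_{10}$ through $X_0\prescript{}{F_0}\times_s Z_1$ together with stability of local diffeomorphisms under pullback (the paper's Lemma~\ref{lm:pb&c}), part (2) by exhibiting $(x,z,y)$ explicitly so that $A_{10}$ is in fact surjective on objects, and part (3) by the same hom-set analysis that the paper packages as a Cartesian square of hom-sets with the right-hand column a bijection by full faithfulness of $G$ and invertibility of $\alpha$. Your unwinding of a morphism of $\mP$ as a pair $(f_1,f_3)$ with $G_1(f_3)\circ z_1=z_2\circ F_1(f_1)$ is an accurate element-level rendering of that square, so there is nothing substantive to add.
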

\begin{proof}\leavevmode
\begin{enumerate}
\item
    Consider diagram~\eqref{diag:fibprod0}. Since $G_0$ is a local diffeomorphism, so is $T$ and therefore also $C^1_0$. Since $s$ is a local diffeomorphism, so is $C^2_0$. Therefore, $A_{10}=C_0^2\circ C^1_0$ is a local diffeomorphism.
\item
    Let $x\in X_0$. By assumption on $G$, there exist $z\in Z_1$ and $y\in Y_0$ such that $s(z)=F_0(x)$ and $t(z)=G_0(y)$. Then $p:=(x,z,y)\in P_0$ and $A_{10}(p)=x$. Thus, $A_{10}$ is surjective and in particular $A_1$ is essentially surjective.
\item
    Let $p,q\in P_0$. Denote by
    \[
    \qquad \a_{pq}:Hom_{\mZ}((G_0\circ A_{20})(p),(G_0\circ A_{20})(q)) \lrarr Hom_{\mZ}((F_0\circ A_{10})(p),(F_0\circ A_{10})(q))
    \]
    the map given by
    \[
    \a_{pq}(f):=\a_q\circ f\circ i(\a_p).
    \]
    Then the following square is Cartesian:
    \[
    \xymatrix{
    Hom_{\mP}(p,q)\ar[r]^{A_{21}}\ar[dd]^{A_{11}} & Hom_{\Y}(A_{20}(p),A_{20}(q))\ar[d]^{G_1}\\
    & Hom_{\mZ}(G_0\circ A_{20}(p),G_0\circ A_{20}(q))\ar[d]^{\a_{pq}}\\
    Hom_{\X}(A_{10}(p),A_{10}(q))\ar[r]^(.40){F_1} & Hom_{\mZ}(F_0\circ A_{10}(p), F_0\circ A_{10}(q)).
    }
    \]
    From the assumption on $G$ and the invertibility of $\a$, the right column is a bijection. Therefore, the left column is a bijection.
\item
    This is a combination of the preceding three properties.
\end{enumerate}
\end{proof}

Let $f:\X\overset{Q}{\leftarrow} \X' \overset{G}{\rarr} \Y$ and $g:\Y\overset{R}{\leftarrow} \Y' \overset{F}{\rarr} \mZ$ be morphisms in $\Orb$. Taking the weak fiber product in $\EPG$ of $G$ and $R$, we get the following diagram:
\begin{equation}\label{compdiag}
\xymatrix{
& & \X'\prescript{}{G}\times_{R}^{} \Y'\ar[ld]^{R'}\ar[rd]_{G'} & & \\
 & \X'\ar[ld]_Q\ar[rd]^G\ar@{}[rr]|(.51){\Rightarrow\,}^\a  & & \Y'\ar[ld]_R\ar[rd]^F & \\
\X\ar[rr]^g & & \Y\ar[rr]^f & & \mZ .
}
\end{equation}

By Lemma~\ref{lm:cartprop}, the projection $R'$ is a refinement. Thus, so is $Q\circ R'$, and we can define $f\circ g$ to be $(F\circ G')|(Q\circ R')$.

\begin{rem}
The weak fiber product offers a natural choice of squares required for the definition of composition in~\cite[Section 2.2]{Pronk}, as remarked in~\cite[Lemma 23]{Zernik2}.
\end{rem}

\subsubsection{Proof of Theorem~\ref{thm:main}\ref{opushcomp}}
Write $g=G|Q$, $f=F|R$, and $f\circ g=(F\circ G')|(Q\circ R')$, as in diagram~\eqref{compdiag} above.
By Lemmas~\ref{lm:compepg} and~\ref{lm:ppepg},
\begin{align*}
(f\circ g)^*
=
(Q\circ R')_* (F\circ G')^*
=
Q_* R'_* (G')^* F^*
=
Q_*G^*R_*F^* = g^*f^*.
\end{align*}
Similarly,
\[
(f\circ g)_*
=
(F\circ G')_* (Q\circ R')^*
=
F_* G'_* (R')^* Q^*
=
F_* R^* G_* Q^*
= f_*g_*.
\]

\subsubsection{Proof of Theorem~\ref{thm:main}\ref{opushpull}}
Write $f=F|R$. Then,
\begin{align*}
f_*(f^*\xi\wedge \eta)
&=
F_* R^*(R_* F^*\xi \wedge \eta)\\
&=
F_*(R^*R_* F^*\xi \wedge R^*\eta),\\
\shortintertext{by Lemma~\ref{lm:JFK},}
&=
F_*(F^*\xi  \wedge R^*\eta)\\
\shortintertext{by Lemma~\ref{lm:modepg},}
&=
\xi  \wedge F_*R^*\eta\\
&=
\xi\wedge f_*\eta.
\end{align*}

\subsubsection{Proof of Theorem~\ref{thm:main}\ref{opushfiberprod}}
Write $f:\X\overset{R}{\leftarrow} \X' \overset{F}{\rarr} \mZ$ and $g:\Y\overset{Q}{\leftarrow} \Y' \overset{G}{\rarr} \mZ$.
By Lemma~\ref{cl:fibprodorb}, we may write
$p=(Q\circ \hat{A}_2)|\Id$ and $q=(R\circ \hat{A}_1)|\Id$.
Applying Lemma~\ref{lm:ppepg} to the smaller square in
\[
\xymatrix{
{X'_0\times_{Z_0}Z_1\times_{Z_0} Y'_0}& {X'_0\times_{Z_0}Z_1\times_{Z_0} Y'_0}\ar[l]_{\Id}\ar[rr]^(.65){Q\circ\hat{A}_{20}} &&
{Y_0}\\
{X'_0\times_{Z_0}Z_1\times_{Z_0} Y'_0}\ar[u]_{\Id}\ar[d]^{R\circ \hat{A}_{10}} &
{{X'_0\times_{Z_0}Z_1\times_{Z_0} Y'_0}}\ar[rr]^(0.7){\hat{A}_{20}}\ar[d]_{\hat{A}_{10}} 
& &Y'_0\ar[u]_Q\ar[d]^G\\
{X_0}& {X'_0}\ar[rr]^{F}\ar[l]_R && {Z_0}
}
\]
we get
\[
f^*g_*
=
R_*F^*G_*Q^*
=
R_*(\hat{A}_{10})_*\hat{A}_{20}^*Q^*
=q_*p^*.
\]

\subsubsection{Proof of Theorem~\ref{thm:main}\ref{thmstokes}}
Write $f:\X\overset{R}{\leftarrow} \X' \overset{F}{\rarr} \Y$.
Note that $R$ restricts to a refinement $R|_{\dv\!\X'}: \dv\!\X' \to \dv\!\X$, so $f\big|_{\dv\!\X}=(F\big|_{\dv\!\X'})|(R\big|_{\dv\!\X'})$.
Applying Lemma~\ref{lm:stokesepg} to $R^*\xi$ we get
\begin{align*}
f_*(d \xi)+(-1)^{s+t}\big(f\big|_{\dv\!\X}\big)_*\xi
&=
F_*R^*(d \xi)+(-1)^{s+t}(F\big|_{\dv\!\X'})_* (R\big|_{\dv\!\X'})^*(\xi\big|_{\dv\!\X})\\
&=
F_*(dR^*\xi)+(-1)^{s+t}(F\big|_{\dv\!\X'})_* (R^*\xi)\big|_{\dv\!\X'}\\
&=
d (F_*R^*\xi)\\
&= d(f_*\xi).
\end{align*}

\section{Currents}\label{sec:curr}

Let $\X$ be an object of $\Orb$.
We denote by $\Ac^*(\X) \subset A^*(\X)$ the locally convex subspace of differential forms with compact support on $\X$, meaning,
\[
\Ac^*(\X)=\left\{\xi\in A^*(\X)  \;|\; \pi(\supp(\xi))\subset |\X| \mbox{ is compact}\right\}.
\]
Thus, for $f : \X \to \Y$ a proper morphism, the pull-back $f^* : \Ac^*(\Y) \to \Ac^*(\X)$ is well-defined. We need $f$ to be proper to guarantee it preserves compact support. If, on the other hand, $f$ is a relatively oriented submersion, the push-forward $f_* : \Ac^*(\Y) \to \Ac^*(\X)$ is well-defined even without the properness assumption, because the restriction to the support of the given form is always proper.

Denote by $\A^k(\X)$ the space of currents of cohomological degree $k$, that is, the continuous dual space of the compactly supported differential forms $\Ac^{\dim \X - k}(\X)$ with the weak* topology. When $\X$ is oriented, differential forms are identified as a subspace of currents by
\begin{gather*}
\varphi:A^k(\X)\hookrightarrow \A^k(\X),\\
\varphi(\gamma)(\eta)=\int_{\X}\gamma\wedge\eta,\quad \eta\in \Ac^{\dim \X -k}(\X).
\end{gather*}
The exterior derivative extends to currents via
$d\zeta(\eta)=(-1)^{1+|\zeta|}\zeta(d\eta).$ In particular, if $\d \X=\emptyset$, we have $d\varphi(\gamma) = \varphi(d\gamma).$
Currents are a bimodule over differential forms with
\[
(\eta\wedge \zeta)(\gamma)
:= (-1)^{|\eta|\cdot|\zeta|} \zeta(\eta\wedge\gamma),
\qquad
\gamma\in \Ac^*(\X),\quad \eta\in A^*(\X),\quad\zeta\in\A^*(\X),
\]
and
\[
(\zeta\wedge \eta)(\gamma)
:= \zeta(\eta\wedge\gamma),
\qquad
\gamma\in \Ac^*(\X), \quad \eta\in A^*(\X),\quad\zeta\in\A^*(\X).
\]
This bimodule structure makes $\varphi$ a bimodule homomorphism.

Let $f:\X\to \Y$ be a proper morphism in $\Orb$.
Set $\rdim f:=\dim\Y-\dim\X$.
Define the push-forward
\begin{equation}\label{eq:pfc}
f_* : \A^k(\X) \to \A^{k-\rdim f}(\Y)
\end{equation}
by
\[
(f_*\zeta)(\eta)=(-1)^{m\cdot \rdim f}\zeta(f^*\eta),\qquad \eta\in \Ac^m(\Y).
\]
So, when $f$ is a submersion, $f_* \varphi(\a) = \varphi(f_*\a).$
Similarly, for $f:\X\to \Y$ a relatively oriented submersion, define the pull-back
\begin{equation}\label{eq:pbc}
f^* : \A^{k}(\Y) \to \A^{k}(\X)
\end{equation}
by
\[
(f^*\zeta)(\eta)=\zeta(f_*\eta),\qquad \eta\in \Ac^m(\Y).
\]

We can now formulate an analogue of Theorem~\ref{thm:main} for currents:

\begin{prop}\label{prop:cur}\leavevmode
\begin{enumerate}[label=(\roman*)]
    \item\label{nt-cur}
Let $f,g : \X \to \Y$ be morphisms of orbifolds with corners and let $\alpha : f \Rightarrow g$ be a $2$-morphism. If $f,g,$ are proper, then $f_* = g_* : \A^k(\X) \to \A^{k-\rdim f}(\Y).$ If $f,g,$ are relatively oriented submersions and $\a$ is relatively oriented, then also $f^* = g^*: \A^*(\Y) \to \A^*(\X).$
	\item\label{opushcomp-cur}
Let $g: \X\to \Y$, $f:\Y\to \cZ,$ be proper morphisms of orbifolds with corners. Then
		\[
		(f_*\circ g_*)(\zeta)=(f\circ g)_*\zeta,\qquad \forall \zeta\in \A^*(\X).
		\]
If $f,g,$ are relatively oriented submersions, then
\[
(g^* \circ f^*)(\zeta) = (f\circ g)^*\zeta,\qquad \forall \zeta\in \A^*(\mZ).
\]
	\item\label{opushpull-cur}
		Let $f:\X\to \Y$ be a proper morphism of orbifolds with corners. Then
    \[
	f_*(f^*\eta\wedge\zeta)=\eta\wedge f_*\zeta, \qquad
\forall \,\eta\in A^*(\Y), \quad \zeta\in \A^*(\X).
    \]
If in addition $f$ is a relatively oriented submersion, then
	\[
	f_*(f^*\zeta\wedge\eta)=\zeta\wedge f_*\eta, \qquad
\forall \,\eta\in A^*(\X), \quad \zeta\in \A^*(\Y).
    \]
	\item\label{opushfiberprod-cur}
		Let
		\[
		\xymatrix{
		{\X\times_\cZ \Y}\ar[r]^{\quad p}\ar[d]^{q}&
        {\Y}\ar[d]^{g}\ar@2{L->L}[dl]\\
        {\X}\ar[r]^{f}&\cZ
		}
		\]
		be a weak pull-back diagram of orbifolds with corners,
where $f$ and $g$ are strongly smooth, $f$ is a relatively oriented submersion and $g$ is proper.
Equip $p$ with the transpose pull-back relative orientation and let $\zeta\in \A^*(\Y).$ Then
		\[
		q_*p^*\zeta=f^*g_*\zeta.
		\]
\end{enumerate}
\end{prop}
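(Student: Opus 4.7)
The plan is to prove each part of Proposition~\ref{prop:cur} by duality, converting every assertion about currents into the analogous assertion about forms from Theorem~\ref{thm:main} via the defining pairing $\zeta(\eta) = \zeta\cdot\eta$ for $\zeta\in\A^*(\X)$ and $\eta\in\Ac^*(\X).$ Throughout, the dimensional counts $\rdim(f\circ g) = \rdim f + \rdim g$, $|f_*\eta| = |\eta| - \rdim f$, and $|f^*\zeta| = |\zeta|$ will be used without comment, and the signs produced by the factors $(-1)^{m\cdot\rdim f}$ in~\eqref{eq:pfc} together with the Koszul signs from reordering a form past a current will be tracked carefully.

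Parts~\ref{nt-cur}, \ref{opushcomp-cur}, and~\ref{opushpull-cur} should follow as essentially formal consequences. For~\ref{nt-cur}, I would feed the identity $f^* = g^*$ (and, when applicable, $f_* = g_*$) from Theorem~\ref{thm:main}\ref{nt} into the definitions~\eqref{eq:pfc} and~\eqref{eq:pbc} and observe that both sides evaluate identically on every test form. For~\ref{opushcomp-cur}, I would expand $(f_*g_*\zeta)(\eta)$ step by step, collect the sign $(-1)^{|\eta|\rdim f + |\eta|\rdim g} = (-1)^{|\eta|\rdim(f\circ g)}$, and apply Theorem~\ref{thm:main}\ref{opushcomp} for forms; the pull-back identity is sign-free. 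For~\ref{opushpull-cur}, I would evaluate on a test form $\gamma\in\Ac^*(\Y)$, rearrange the wedge products by Koszul, and invoke Theorem~\ref{thm:main}\ref{opushpull}; the second identity, in which $\zeta$ is a current and $\eta$ is a form, requires $f$ to additionally be a submersion so that $f^*\zeta$ is defined, and reduces to the same calculation.

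The hard part will be~\ref{opushfiberprod-cur}. The required input on forms is the transposed base-change identity
\[
p_*q^*\eta = (-1)^{\rdim f\cdot\rdim g}\,g^*f_*\eta, \qquad \eta\in\Ac^*(\X),
\]
with $p$ carrying the transpose pull-back relative orientation. This is the orbifold analog of Lemma~\ref{lm:flip}, but it is not directly covered by Theorem~\ref{thm:main}\ref{opushfiberprod}, in which $g$ (rather than $f$) is assumed to be a relatively oriented proper submersion and $q$ (rather than $p$) carries the pull-back orientation. My plan is to establish the transposed identity by first proving it on manifolds with corners, combining Proposition~\ref{prop:proppp}\eqref{prop:pushfiberprod}, Lemma~\ref{lm:flipor} (which converts pull-back to transpose pull-back orientation at the cost of $(-1)^{\rdim f\cdot\rdim g}$), and Lemma~\ref{lm:pp}, then lifting it to $\EPG$ via the $J\circ(\cdot)_*\circ K$ mechanism used in the proof of Lemma~\ref{lm:ppepg} together with the partition-of-unity compatibility from Lemma~\ref{lm:partonfib}, and finally passing to $\Orb$ using the weak fiber product description in Lemma~\ref{cl:fibprodorb}.

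The compact-support hypothesis on $\eta$ is essential, since $f$ is not assumed proper and $g$ is not assumed a submersion: compactness of $\supp\eta$ makes $f|_{\supp\eta}$ proper, so $f_*\eta$ is compactly supported, and properness of $g$ makes $q$ proper, so $q^*\eta$ is compactly supported and $p_*q^*\eta$ is well-defined. The main technical obstacle is exactly this orientation and properness bookkeeping, which I expect to occupy most of the proof. Once the transposed base-change identity for forms is in hand, duality closes the argument: evaluating both $(q_*p^*\zeta)(\eta)$ and $(f^*g_*\zeta)(\eta)$ against $\eta\in\Ac^m(\X)$ produces the factors $(-1)^{m\rdim q + \rdim f\cdot\rdim g}\zeta(g^*f_*\eta)$ and $(-1)^{(m-\rdim f)\rdim g}\zeta(g^*f_*\eta)$ respectively, which agree modulo $2$ since $\rdim q = \rdim g$.
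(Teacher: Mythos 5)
Your proposal is correct and follows essentially the same route as the paper: parts \ref{nt-cur}--\ref{opushpull-cur} are proved by the same formal duality computations against test forms with the same sign bookkeeping, and part \ref{opushfiberprod-cur} reduces to the transposed base-change identity $p_*q^*\gamma = (-1)^{\rdim f\cdot\rdim g}g^*f_*\gamma$, which the paper simply cites as ``the orbifold analog of Lemma~\ref{lm:flip}'' while you propose to prove it by lifting Lemma~\ref{lm:flip} through the $J\circ(\cdot)_*\circ K$ mechanism --- a more explicit treatment of the same step. Your final sign comparison $(-1)^{m\rdim q+\rdim f\cdot\rdim g}$ versus $(-1)^{(m-\rdim f)\rdim g}$ matches the paper's calculation exactly.
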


\begin{proof}
\leavevmode
\begin{enumerate}[label=(\roman*)]
\item
    By definition, $\rdim f=\rdim g$.
    By Theorem~\ref{thm:main}\ref{nt}, for all $\gamma\in \Ac^*(\X)$, we have
    \[
    (f_*\zeta)(\gamma)
    = (-1)^{|\gamma|\cdot \rdim f}\zeta(f^*\gamma)
    = (-1)^{|\gamma|\cdot \rdim g}\zeta(g^*\gamma)
    =(g_*\zeta)(\gamma).
    \]
Similarly, for all $\gamma\in \Ac^*(\Y)$,
    \[
    (f^*\zeta)(\gamma)
    =(-1)^{|\gamma|\cdot \rdim f}\zeta(f_*\gamma)
    =(-1)^{|\gamma|\cdot \rdim g}\zeta(g_*\gamma)
    =(g^*\zeta)(\gamma).
    \]
\item
    By Theorem~\ref{thm:main}\ref{opushcomp}, for all $\gamma\in\Ac^*(\X)$ we have
    \begin{multline*}
    \qquad
    (f\circ g)_*(\zeta)(\gamma)
    = (-1)^{|\gamma|\cdot \rdim(f\circ g)} \zeta((f\circ g)^*\gamma)
    = (-1)^{|\gamma|\cdot (\rdim f+\rdim g)} \zeta(g^*f^*\gamma)=\\
    = (-1)^{|\gamma|\cdot \rdim f} (g_*\zeta)(f^*\gamma)
    = (f_*g_*\zeta)(\gamma).
    \end{multline*}
    Similarly, for all $\gamma\in \Ac^*(\mZ)$,
    \[
    (f\circ g)^*(\zeta)(\gamma)
    = \zeta((f\circ g)_*\gamma)
    = g^*f^*\zeta(\gamma).
    \]
\item
    For all $\gamma \in\Ac^*(\Y)$, we have
    \begin{align*}
    f_*(f^*\eta\wedge\zeta)(\gamma)
    =& (-1)^{|\gamma|\cdot \rdim f} (f^*\eta\wedge\zeta)(f^*\gamma)\\
    =& (-1)^{|\gamma|\cdot\rdim f+|\eta|\cdot|\zeta|} \zeta(f^*\eta\wedge f^*\gamma)\\
    =& (-1)^{|\gamma|\cdot \rdim f +|\eta|\cdot|\zeta| + (|\eta|+|\gamma|)\cdot \rdim f} (f_*\zeta)(\eta\wedge \gamma)\\
    =& (-1)^{|\eta|\cdot|\zeta|+|\eta|\cdot \rdim f+|\eta|(|\zeta|-\rdim f)} (\eta\wedge f_*\zeta)(\gamma)\\
    =&(\eta\wedge f_*\zeta)(\gamma).
    \end{align*}
If $f$ is a relatively oriented submersion,
\begin{align*}
f_*(f^*\zeta\wedge\eta)(\gamma)
=& (-1)^{|\gamma|\cdot \rdim f} (f^*\zeta\wedge\eta)(f^*\gamma)\\
=& (-1)^{|\gamma|\cdot \rdim f}
    (f^*\zeta)(\eta\wedge f^*\gamma)\\
=& (-1)^{|\gamma|\cdot \rdim f}
    \zeta(f_*(\eta\wedge f^*\gamma))\\
=& (-1)^{|\gamma|\cdot \rdim f+|\eta||\gamma|}
    \zeta(f_*(f^*\gamma\wedge \eta))\\
=& (-1)^{|\gamma|\cdot \rdim f+|\eta||\gamma|}
    \zeta(\gamma\wedge f_*\eta)\\
=& (-1)^{|\gamma|\cdot \rdim f+|\eta||\gamma|+|\gamma||f_*\eta|}
    \zeta(f_*\eta\wedge\gamma)\\
=& (\zeta\wedge f_*\eta)(\gamma).
    \end{align*}
\item
    Using the orbifold analog of Lemma~\ref{lm:flip} and $\rdim g=\rdim q$, we get
  \begin{align*}
    (q_*p^*\zeta)(\gamma)
  =& (-1)^{|\gamma|\cdot\rdim q} (p^*\zeta)(q^*\gamma)\\
  =& (-1)^{|\gamma|\cdot\rdim q} \zeta(p_*q^*\gamma)\\
  =& (-1)^{|\gamma|\cdot\rdim q+\rdim{f}\cdot \rdim{g}}
    \zeta(g^*f_*\gamma)\\
  =& (-1)^{|\gamma|\cdot\rdim q +\rdim{f}\cdot \rdim{g} + |f_*\gamma|\cdot\rdim g} (f^*g_*\zeta)(\gamma)\\
  =& (f^*g_*\zeta)(\gamma).
  \end{align*}
\end{enumerate}
\end{proof}

We proceed to formulate a version of Theorem~\ref{thm:main}\ref{thmstokes} for currents. For this, we need a notion of restriction to the boundary. However, restricting a current to the boundary in general requires conditions on its wavefront set. To formulate the analogue of Theorem~\ref{thm:main}\ref{thmstokes} in elementary terms, we consider only currents that come from differential forms by the inclusion $\varphi.$ However, as noted above, $\varphi$ commutes with $d$ only if $\d \X=\emptyset$. Otherwise, a correction is needed that depends on the boundary, in the spirit of Stokes' theorem.  To avoid the issue, we introduce the following modification of the complex of currents.

Define differential forms relative to the boundary by
\[
\Ac^*(\X,\d\X)=\{\eta\in \Ac^*(\X)\,|\, i_{\X}^*\eta=0\},
\]
and denote by $\An^k(\X)$ the dual space of $\Ac^{\dim\X-k}(\X,\d\X)$. Consider the inclusion
\[
\psi: \A^*(\X)\lrarr \An^*(\X).
\]
This now produces an operation that commutes with $d$ even for orbifolds with non-empty boundary:
\begin{lm}
$d\circ (\psi\circ \varphi)= (\psi\circ \varphi)\circ d$.
\end{lm}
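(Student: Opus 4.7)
The plan is to test both sides against an arbitrary $\eta \in \Ac^{\dim\X - k - 1}(\X, \d\X)$ for $\gamma \in A^k(\X)$. Unwinding the definitions of $\varphi$, $\psi$, and of the differential on currents, the left-hand side evaluates to
\[
(d(\psi\varphi\gamma))(\eta) = (-1)^{k+1}(\psi\varphi\gamma)(d\eta) = (-1)^{k+1}\int_\X \gamma\wedge d\eta,
\]
while the right-hand side evaluates to
\[
(\psi\varphi(d\gamma))(\eta) = \int_\X d\gamma\wedge \eta.
\]
Thus the claimed identity reduces, via the Leibniz rule $d(\gamma\wedge\eta) = d\gamma\wedge\eta + (-1)^k\gamma\wedge d\eta$, to the single assertion $\int_\X d(\gamma\wedge\eta) = 0$.

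To establish this vanishing, I would apply Theorem~\ref{thm:main}\ref{thmstokes} to the terminal morphism $\pi:\X\to pt$ and the form $\xi := \gamma\wedge\eta \in \Ac^{\dim\X - 1}(\X)$. Since $\pi_*\xi$ lands in $A^{-1}(pt) = 0$, the left-hand side of Stokes' theorem vanishes identically, and (noting that $\dv\X = \d\X$ in this case, and $(-1)^{\dim\X + (\dim\X -1)} = -1$) what remains is
\[
0 = \int_\X d(\gamma\wedge\eta) - \int_{\d\X}(\gamma\wedge\eta)|_{\d\X}.
\]
The hypothesis $i_\X^*\eta = 0$ kills the boundary term, yielding $\int_\X d(\gamma\wedge\eta)=0$ as required.

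One preliminary point to verify is that the differential on currents genuinely descends to $\An^*(\X)$, equivalently, that $d$ preserves the subspace $\Ac^*(\X,\d\X) \subset \Ac^*(\X)$; this is immediate from $i_\X^*(d\eta) = d(i_\X^*\eta) = 0$, so the formula $d\zeta(\eta) = (-1)^{1+|\zeta|}\zeta(d\eta)$ makes sense for $\zeta \in \An^*(\X)$. I do not foresee any serious obstacle here: the only delicate aspect is keeping the Koszul sign straight simultaneously with the sign appearing in the definition of $d$ on currents, but both signs cancel cleanly once the boundary contribution in Stokes is dropped.
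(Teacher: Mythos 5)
Your proof is correct and follows essentially the same route as the paper's: evaluate both sides on a test form, apply the Leibniz rule, and reduce to the vanishing of $\int_\X d(\gamma\wedge\eta)$, which follows from Stokes together with the relative boundary condition $i_\X^*\eta=0$. The only cosmetic difference is that you justify the Stokes step by invoking Theorem~\ref{thm:main}\ref{thmstokes} for the map $\X\to pt$ (which, strictly speaking, requires noting that this map need not be proper when $\X$ is noncompact, so one should restrict to the compact support of $\eta$), whereas the paper leaves that step implicit.
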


\begin{proof}
For $\eta\in A^*(\X)$ and $\gamma\in \Ac^*(\X,\d\X)$, we have
\begin{align*}
d\circ (\psi\circ\varphi)(\eta)(\gamma)
=&
(-1)^{1+|\eta|}(\psi\circ \varphi)(\eta)(d\gamma)\\
=&
(-1)^{1+|\eta|}\int_{\X} \eta\wedge d\gamma\\
=&
-\int_{\X} d(\eta\wedge \gamma)+\int_{\X} d\eta\wedge \gamma\\
=&
\int_{\X} d\eta\wedge \gamma\\
=& (\psi\circ\varphi)(d\eta)(\gamma).
\end{align*}
\end{proof}

The last thing we need before formulating a version of Stokes' theorem for currents is the following observation.

\begin{lm}\label{lm:horvan}
Let $f:M\to N$ be a strongly smooth map of manifolds with corners and
let $\gamma\in A^*(N)$ such that $i_N^*\gamma=0$. Then
$(\ih_M)^*f^*\gamma=0$.
\end{lm}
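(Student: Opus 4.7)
The statement is local on $\dh M,$ so it suffices to show that for each $p \in \dh M$ the pullback $(\ih_M)^*f^*\gamma$ vanishes on a neighborhood of $p.$ The plan is to construct, locally around $p,$ a smooth map $\hat f$ into $\d N$ satisfying $i_N\circ\hat f = f\circ \ih_M.$ Granting such a lift on a neighborhood $U$ of $p,$ naturality of pullback yields
$(\ih_M)^*f^*\gamma|_U = (f\circ \ih_M|_U)^*\gamma = (i_N\circ\hat f)^*\gamma = \hat f^*(i_N^*\gamma) = 0,$
using the hypothesis $i_N^*\gamma = 0$ in the final step.

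To build $\hat f$ around $p = (x,\beta')\in \dh M,$ I would use the construction of $\dh M$ recalled in Section~\ref{sssec:vhbd}: since $p\in \im(\xi),$ there exist $(y,\beta)\in \d N$ with $y = f(x)$ and a boundary defining function $b$ near $(y,\beta)$ such that $b\circ f$ is a boundary defining function at $x$ for $\beta'.$ Consequently $b\circ f$ vanishes precisely on the boundary face of $M$ cut out by $\beta'$ near $x,$ so $b\circ f\circ \ih_M\equiv 0$ on a neighborhood $U$ of $p.$ This places $f\circ \ih_M(U)$ inside the zero locus of $b,$ which is a neighborhood of $(y,\beta)$ in $\d N,$ giving the set-theoretic lift $\hat f.$ Smoothness of $\hat f$ then follows from smoothness of $f$ and the fact that boundary defining functions provide smooth local coordinates on $\d N.$

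The main technical point is the interplay between the strongly smooth hypothesis and the local defining-function coordinates, which is what ensures that the image really lies in $\d N$ at the component $(y,\beta)$ rather than in a stratum of higher codimension, and that the resulting lift is smooth. Strong smoothness is calibrated precisely for this, via the dichotomy that either $b\circ f\equiv 0$ or $b\circ f$ is itself a boundary defining function at $x;$ on $\dh M$ only the latter alternative is relevant, which is what makes the construction go through. Compared with the rest of the paper this is a mild step: the real work has already been done in the definition of $\dh M.$
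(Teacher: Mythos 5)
Your argument is correct and is essentially the paper's proof in local form: the paper packages the same factorization of $f\circ \ih_M$ through $i_N$ globally, via the covering map $\xi:\Xi\to\dh M$ of Section~\ref{sssec:vhbd} and the commutative square $f\circ \ih_M\circ\xi=i_N\circ\pi_2$, and then concludes from the injectivity of $\xi^*$ on forms. Your local lift $\hat f$ is precisely $\pi_2$ composed with a local section of $\xi$, so the two proofs differ only in whether the lift is assembled globally on the cover or constructed chart by chart.
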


\begin{proof}
With the notation of Section~\ref{sssec:vhbd}, we have the following commutative diagram.
\[
\xymatrix{ \Xi \ar[r]^{\xi}\ar[d]^{\pi_2}& \dh M \ar[r]^{\ih_M} & M \ar[d]^{f} \\
\partial N \ar[rr]^{i_N} && N
}
\]
By~\cite[Proposition 4.2]{Joyce2}, $\Xi$ can be given the structure of a smooth manifold with corners such that $\xi$ is a covering map and $\pi_2$ is smooth.
By assumption, $\pi_2^*i_N^*\gamma=0$, and therefore $\xi^*(\ih_M)^*f^*\gamma=0$.  Since $\xi$ is a surjective submersion, it follows that $(\ih_M)^*f^*\gamma=0$.
\end{proof}

\begin{rem}
In the situation of Lemma~\ref{lm:horvan}, it is not true in general that $(i^v_M)^*f^*\gamma = 0.$ Thus pull-back of differential forms does not preserve differential forms relative to the boundary. So, it is better to work with usual differential forms when defining pull-back and push-forward.
\end{rem}

In the following, we consider only oriented orbifolds so that the inclusion $\varphi : A^*(\X) \to \A^*(\X)$ is defined. More generally, one should consider differential forms with coefficients in a local system.
Denote by $\varphi_{\dv}$ the inclusion
\[
\varphi_{\dv}: A^*(\d\X)\lrarr \A^*(\dv \X).
\]

\begin{prop}
Let $f:\X\to \Y$ be a strongly smooth proper morphism
of oriented orbifolds with corners with $\dim \X=s$, and let
$\eta\in A^t(\X)$.
Then
\[
\psi(f_*(\varphi(d\eta))) = d\psi(f_*(\varphi(\eta))) +(-1)^{s+t}  \psi((f\circ \iv_{\X})_*(\varphi_{\dv})((\iv_{\X})^*\eta)).
\]
\end{prop}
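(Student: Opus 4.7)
The plan is to verify the stated equality of currents in $\An^{\ast}(\Y)$ by evaluating both sides on an arbitrary test form $\gamma\in\Ac^{\ast}(\Y,\d\Y)$ of complementary degree. Unwinding the definitions of $\psi$ (which is restriction to the subspace $\Ac^{\ast}(\X,\d\X)$), of $d$ on currents via $d\zeta(\alpha)=(-1)^{1+|\zeta|}\zeta(d\alpha)$, and of the pushforward $f_*$ on currents via $(f_*\zeta)(\alpha)=(-1)^{|\alpha|\rdim f}\zeta(f^*\alpha)$, together with $f^*d\gamma=df^*\gamma$, $\rdim(f\circ\iv_\X)=\rdim f+1$, and $(f\circ\iv_\X)^*\gamma=(\iv_\X)^*f^*\gamma$, each of the three terms in the proposition becomes an integral of smooth forms over $\X$ or $\dv\X$. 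After cancelling the common factor $(-1)^{|\gamma|\rdim f}$ and applying the graded Leibniz rule $d(\eta\wedge f^*\gamma)=d\eta\wedge f^*\gamma+(-1)^t\eta\wedge df^*\gamma$, the two integrals involving $df^*\gamma$ combine, and the identity reduces to an equation of the form
\[
\int_\X d(\eta\wedge f^*\gamma)=\pm\int_{\dv\X}(\iv_\X)^*(\eta\wedge f^*\gamma).
\]

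To establish this reduced equation, I apply Theorem~\ref{thm:main}\ref{thmstokes} to the unique morphism $\pi:\X\to pt$ with the form $\xi=\eta\wedge f^*\gamma\in A^{t+|\gamma|}(\X)$. Since $pt$ has empty boundary, the vertical boundary of $\X$ with respect to $\pi$ coincides with all of $\d\X$, and $d(\pi_*\xi)=0$ because $\pi_*\xi$ is a constant; Stokes' theorem thus collapses to $\int_\X d(\eta\wedge f^*\gamma)=\pm\int_{\d\X}i_\X^*(\eta\wedge f^*\gamma)$. The orbifold analog of Lemma~\ref{lm:horvan}, obtained by applying that lemma to the manifold-level structure maps of a representing \'etale proper groupoid, then yields $(\ih_\X)^*f^*\gamma=0$ (since $i_\Y^*\gamma=0$), so $(\ih_\X)^*(\eta\wedge f^*\gamma)=0$ and the integral over $\d\X$ reduces to the integral over $\dv\X$.

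The main obstacle is precise sign tracking. The sign conventions for $d$ on currents, for pushforward of currents with the factor $(-1)^{|\alpha|\rdim f}$, for the change in $\rdim$ under composition with $\iv_\X$, and for the graded Leibniz rule all interact, and one must verify that the exponent obtained by unwinding the proposition agrees with the one produced by Stokes' theorem for $\pi$. Once this bookkeeping is reconciled, the remaining ingredients -- Stokes for the constant map and the vanishing of the horizontal-boundary pullback via Lemma~\ref{lm:horvan} -- are routine.
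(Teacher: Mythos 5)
Your proposal is correct and follows essentially the same route as the paper's proof: evaluate on a test form $\gamma\in\Ac^*(\Y,\d\Y)$, unwind the definitions of $\psi$, $d$, and $f_*$ on currents, apply the graded Leibniz rule to reduce to $\int_\X d(\eta\wedge f^*\gamma)=\int_{\d\X}\eta\wedge f^*\gamma$ (Stokes for the map to a point, legitimate since $f$ proper and $\gamma$ compactly supported make $\eta\wedge f^*\gamma$ compactly supported), and then invoke Lemma~\ref{lm:horvan} to discard the horizontal boundary contribution. The only remaining work is the sign bookkeeping you flag, which the paper carries out explicitly and which does reconcile.
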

\begin{proof}
For $\gamma\in \Ac^{\dim \X -|\eta|-1}(\Y,\d\Y)$, compute
\begin{align*}
\psi(d(f_*\varphi(\eta)))(\gamma)
    =& (-1)^{1+|\eta|+\rdim f} (f_*\varphi(\eta))(d\gamma)\\
    =& (-1)^{1+|\eta|+\rdim f +\rdim f\cdot(1+|\gamma|)} \varphi(\eta)(f^*d\gamma)\\
    =& (-1)^{1+|\eta|+\rdim f\cdot|\gamma|} \int_{\X}\eta\wedge d(f^*\gamma)\\
    =& (-1)^{1+\rdim f\cdot |\gamma|}
    \big(\int_{\X} d(\eta\wedge f^*\gamma)
    - \int_{\X} d\eta\wedge f^*\gamma \big) \\
    =&(-1)^{1+\rdim f\cdot|\gamma|}
     \big(\int_{\d \X} (\eta\wedge f^*\gamma)
    -\int_{\X} d\eta\wedge f^*\gamma \big).\\
\shortintertext{By Lemma~\ref{lm:horvan},}
    =&(-1)^{1+\rdim f\cdot|\gamma|}
     \big(\int_{\dv \X} (\eta\wedge f^*\gamma)
    -\int_{\X} d\eta\wedge f^*\gamma \big)\\
    =&(-1)^{1+\rdim f\cdot|\gamma|}
     \big(\varphi_{\dv}(\eta|_{\dv\X})(f^*\gamma|_{\dv\X})
    -\varphi(d\eta)(f^*\gamma) \big)\\
    =&(-1)^{1+\rdim f\cdot |\gamma|+ |\gamma|\cdot (\rdim f+1)}
     (f|_{\dv\X})_*\varphi_{\dv}(\eta|_{\dv\X})(\gamma)-\\
    &\quad -(-1)^{1+\rdim f\cdot|\gamma| + |\gamma|\cdot \rdim f} f_*\varphi(d\eta)(\gamma)\\
    =&(-1)^{1+|\gamma|}
     (f|_{\dv\X})_*\varphi_{\dv}(\eta|_{\dv\X})(\gamma)
    +f_*\varphi(d\eta)(\gamma)\\
    =&(-1)^{\dim\X+|\eta|}
     (f|_{\dv\X})_*\varphi_{\dv}(\eta|_{\dv\X})(\gamma)
    +f_*\varphi(d\eta)(\gamma).
\end{align*}
\end{proof}

\bibliography{../../bibliography_exp}
\bibliographystyle{../../amsabbrvcnobysame}
\end{document}